%-----------------------------------------------------------------------
% Beginning of mcom-l-template.tex
%-----------------------------------------------------------------------
%
%     This is a topmatter template file for MCOM for use with AMS-LaTeX.
%
%     Templates for various common text, math and figure elements are
%     given following the \end{document} line.
%
%%%%%%%%%%%%%%%%%%%%%%%%%%%%%%%%%%%%%%%%%%%%%%%%%%%%%%%%%%%%%%%%%%%%%%%%

%     Remove any commented or uncommented macros you do not use.

\documentclass[]{amsart}
%\documentclass{mcom-l}

%     If you need symbols beyond the basic set, uncomment this command.
%\usepackage{amssymb}

%     If your article includes graphics, uncomment this command.
%\usepackage{graphicx}

%     If the article includes commutative diagrams, ...
%\usepackage[cmtip,all]{xy}

%     Update the information and uncomment if AMS is not the copyright
%     holder.
%\copyrightinfo{2009}{American Mathematical Society}

% Additional math symbols
\usepackage{amssymb}
% Math environments for definitions, theorems,...
\usepackage{amsthm}
% Easy handling of theorem environments
\usepackage{thmtools}
% Load graphics
% \usepackage[draft]{graphicx}
\usepackage{graphicx}
% Bold math
\usepackage{bm}
% Better typography, less hyphens
\usepackage{microtype}
% Alternative math symbols
\usepackage{dsfont}
% Better math environments
\usepackage{mathtools}
% Draw pictures
\usepackage[backend=biber,bibencoding=utf-8]{biblatex}
% Correct quotes
\usepackage{csquotes}
% Comment out latex code blockwise
\usepackage{comment}
% Hyperlinks in pdf
\usepackage{hyperref}
% Automatic references to environment name
\usepackage[capitalize]{cleveref}
\usepackage{enumitem}
% Better enumeration
\usepackage{color}
% Color
% Tables
\usepackage{booktabs,multirow}
% Diagrams
\usepackage{tikz}
% Subfigures
\usepackage{subcaption}

\addbibresource{literature.bib}

%% Sum-integral

\def\Xint#1{\mathchoice
   {\XXint\displaystyle\textstyle{#1}}%
   {\XXint\textstyle\scriptstyle{#1}}%
   {\XXint\scriptstyle\scriptscriptstyle{#1}}%
   {\XXint\scriptscriptstyle\scriptscriptstyle{#1}}%
   \!\int}
\def\XXint#1#2#3{{\setbox0=\hbox{$#1{#2#3}{\int}$}
     \vcenter{\hbox{$#2#3$}}\kern-.5\wd0}}
\def\ddashint{\Xint=}

% useful macros for tables
\newcommand{\e}[1]{\ensuremath{\cdot 10^{#1}}}
%%%\renewcommand{\arraystretch}{1.1}
       % Top strut
 % Bottom strut

% define some macros for creating boxes
\usepackage{dashbox}
\newif\ifshowboxes \showboxestrue

% Completion of tensor products

\DeclareMathOperator{\singsupp}{sing\,supp}
\DeclareMathOperator{\supp}{supp}

\DeclareMathOperator{\erfc}{erfc}
\DeclareMathOperator{\dist}{dist}
\DeclareMathOperator{\conv}{Conv}

% big math operators

\newcommand{\abs}[1]{\ensuremath{ {\lvert{#1}\rvert} }}
\newcommand{\babs}[1]{\ensuremath{ {\bigl\lvert{#1}\bigr\rvert} }}

\renewcommand{\Re}[1]{\mathrm{Re}(#1)}
\renewcommand{\Im}[1]{\mathrm{Im}(#1)}

\newcommand{\R}{\mathds{R}}
\newcommand{\C}{\mathds{C}}
\newcommand{\N}{\mathds{N}}
\newcommand{\Nz}{\mathds{N}_0}
\newcommand{\Z}{\mathds{Z}}

\newcommand{\dd}{\mathrm{d}}

\newcommand{\cL}{\mathcal{L}}

\newtheorem{theorem}{Theorem}[section]
\newtheorem{lemma}[theorem]{Lemma}
\newtheorem{corollary}[theorem]{Corollary}

\theoremstyle{definition}
\newtheorem{definition}[theorem]{Definition}

\theoremstyle{remark}
\newtheorem{remark}[theorem]{Remark}

\numberwithin{equation}{section}

\begin{document}

% \title[short text for running head]{full title}
\title[Lattice sums without translational invariance]{On the computation of lattice sums without translational invariance}

%    Only \author and \address are required; other information is
%    optional.  Remove any unused author tags.

%    author one information
% \author[short version for running head]{name for top of paper}
\author{Andreas A. Buchheit }
\address{Department of Mathematics, Saarland University, 66123 Saarbrücken, Germany}
\curraddr{}
\email{buchheit@num.uni-sb.de}
\thanks{}

%    author two information
\author{Torsten Keßler}
\address{Department of Mechanical Engineering, Eindhoven University of Technology, 5600 MB Eindhoven, Netherlands}
\curraddr{}
\email{}
\thanks{}

\author{Kirill Serkh}
\address{Departments of Mathematics and Computer Science, University of Toronto, Toronto, ON M5S 2E4, Canada}
\curraddr{}
\email{}
\thanks{}

%    \subjclass is required.
\subjclass[2010]{Primary }

\date{}

\dedicatory{}

\begin{abstract}
This paper introduces a new method for the efficient computation of oscillatory multidimensional lattice sums in geometries with boundaries. Such sums are
ubiquitous in both pure and applied mathematics, and have immediate applications in condensed matter physics and topological quantum physics. The challenge in their evaluation results from the combination of singular long-range interactions with the loss of translational invariance caused by the boundaries, rendering standard tools ineffective. Our work shows that these lattice sums can be generated from a generalization of the Riemann zeta function to multidimensional non-periodic lattice sums. We put forth a new representation of this zeta function together with a numerical algorithm that ensures exponential convergence across an extensive range of geometries. Notably, our method's runtime is influenced only by the complexity of the considered geometries and not by the number of particles, providing the foundation for efficient simulations of macroscopic condensed matter systems. We showcase the practical utility of our method by computing interaction energies in a three-dimensional crystal structure with $3\times 10^{23}$ particles. Our method's accuracy is demonstrated through extensive numerical experiments. A reference implementation is provided online along with this article.
\end{abstract}

\maketitle

%%%%%%%%%%%%%%%%%%%%%%%%%%%%%%%%%%%%%%%%%%%%%%%%%%%%%%%%%%%%%%%%%%%%%%%%%%%%%
% Part 1: Introduction till outlook
%%%%%%%%%%%%%%%%%%%%%%%%%%%%%%%%%%%%%%%%%%%%%%%%%%%%%%%%%%%%%%%%%%%%%%%%%%%%%

\section{Introduction}

Lattice sums and their related zeta functions are of central importance in all of mathematics, with applications ranging from the study of the distribution of prime numbers to eigenvalues of pseudo-differential operators \cite{borwein2013lattice}. 
Lattice sums that involve algebraic powers of quadratic forms allow for the description of general long-range interactions in physical systems, encompassing the Coulomb interaction between charged particles, dipolar interactions in magnetic systems, and gravitation \cite{campa2014physics}. These interactions are ubiquitous in nature and give rise to numerous applications in condensed matter and quantum physics \cite{Castelnovo2008,PhysRevLett.84.5687,fey2019quantum,Richerme2014}.

The interplay of long-range interactions with boundaries has important implications for fundamental research and technological application. Topological defects in magnetic materials with dipolar long-range interactions are currently being explored as building blocks in novel spintronics devices \cite{psaroudaki2021skyrmion,song2020skyrmion,zhang2017skyrmion}, where the material boundary needs careful consideration \cite{martinez2018theory}.  Topological excitations at superconductor boundaries, called Majorana zero modes, have been long-sought as the main ingredient for error-resistant quantum computing \cite{lutchyn2018majorana}. Recent results by Microsoft Quantum strongly hint at their discovery \cite{aghaee2023inas}. Some of the authors have recently demonstrated that exotic topological phases in unconventional superconductors can arise due to long-range interactions between electrons \cite{buchheit2023exact}.

The foundation for the computation of lattice sums for complete lattices $\Lambda=A\mathds Z^{d}$, with $A\in \mathds R^{d\times d}$ regular, is given by the Epstein zeta function, the generalization of Riemann zeta to oscillatory lattice sums in higher dimensions \cite{epstein1903theorieI,epstein1903theorieII}. While efficiently computable representations of this function are well established by now \cite{crandall2012unified}, the problem of computing lattice sums with boundaries, for instance, sums over a set $L=A\Nz^d$, has remained open \cite{elizalde2012spectral_zeta}. This work solves this long-standing issue by presenting an efficiently computable framework for lattice sums and related zeta functions of the form
\[
\sum_{\bm z\in L} \frac{e^{-2\pi i \bm y\cdot \bm z}}{\vert \bm z \vert^\nu},\quad \mathrm{Re}(\nu)>d, 
\]
with $\bm y\in \mathds R^d$, including their meromorphic continuations to $\nu \in \mathds C$ for a large set of geometries $L$. We focus on recombinations of the corner $L=A\mathds Z^d-\bm x$ with $\bm x\in \mathds R^d$, including parallelepipeds and half-spaces, from which all relevant crystal structures can be constructed.

This work is designed for an interdisciplinary audience with diverse aims and backgrounds. Consequently, we have structured it as follows. In Section~\ref{sec:epstein_crandall}, we discuss the Epstein zeta function for complete lattices and present a compact reformulation of Crandall's formula that allows for its efficient evaluation. \cref{sec:main_result} presents the main results for general point sets $L$ and provides all the necessary tools for applying our method. After introducing a generalized Crandall formula for zeta functions on uniformly discrete sets, we present an efficiently computable representation for prototypical lattice subsets, including corners and parallelepipeds. In \cref{subsec:spin_circuit}, we benchmark the performance of our method and compute energies in a macroscopic 3D crystal structure of physical relevance, incorporating nontrivial boundaries. We provide the full source code alongside the article to facilitate the application of our method by readers. Our conclusions and an outlook on future applications are presented in \cref{sec:conclusions}. \cref{sec:derivation} details the derivation of the generalized Crandall formula. The detailed algorithm for numerically computing the zeta functions is laid out in \cref{sec:algorithm}. Finally, \cref{sec:experiments} benchmarks our results against analytical results and numerical experiments.  Proofs of technical lemmas are provided in the appendix.
A reference implementation of our algorithm together with a notebook that generates the figures of this article
is available at \url{https://doi.org/10.5281/zenodo.10783201}.

\section{Epstein zeta and reformulation of Crandall's formula}
\label{sec:epstein_crandall}

We begin by introducing the concept of lattices.
\begin{definition}[Lattices]
A lattice $\Lambda\subseteq \mathds R^d$ is defined as a periodic set of points of the form $
\Lambda=A \mathds Z^d$,
with $A\in \mathds R^{d\times d}$ regular. An important property of the lattice is its elementary lattice cell volume  $V_\Lambda= |\det A|$.
\end{definition}

The central mathematical object of study in the description of long-range lattice sums is the Epstein zeta function. 
\begin{definition}[Epstein zeta function]
Let $\Lambda=A\mathds Z^d$, with $A\in \mathds R^{d\times d}$ regular, $\bm x,\bm y\in \mathds R^d$, and $\nu\in \mathds C$. Then for $\mathrm{Re}(\nu)>d$, the Epstein zeta function is defined by the Dirichlet series
\begin{equation*}
  Z_{\Lambda,\nu}\left\vert \begin{matrix}
      \bm x\\\bm y
    \end{matrix}\right\vert =\,\sideset{}{'}\sum_{\bm z \in \Lambda} \frac{e^{-2\pi i  \bm y \cdot \bm z} }{ {\vert   \bm z-\bm x\vert}^{\nu}},
\end{equation*}
where the primed sum excludes the case $\bm x=\bm z$. The function is meromorphically continued to $\nu\in \mathds C$.
\end{definition}
Originally introduced at the beginning of the 20th century by Paul Epstein \cite{epstein1903theorieI,epstein1903theorieII}, it forms the natural generalization of the Riemann zeta function to oscillatory singular lattice sums in higher dimensions. Its applications span from the computation of electrostatic crystal potentials \cite{emersleben1923zetafunktionenI, emersleben1923zetafunktionenII}, over analytic number theory and statistical mechanics \cite{terras2012harmonic}, to quantum field theory \cite{elizalde2012spectral_zeta}. Two of the authors have recently developed the Singular Euler--Maclaurin expansion (SEM), a generalization of the 300-year-old Euler--Maclaurin summation formula to singular functions in higher dimensions \cite{buchheit2022efficient,buchheit2022singular} that uses the Epstein zeta function as a key element. This method has led to the prediction of two new phases in unconventional superconductors \cite{Buchheit_Continuum_representation_2022}.

Starting with early works by Chowla and Selberg \cite{chowla1949epstein}, Terras \cite{terras1973bessel}, Shanks \cite{shanks1975calculation}, and Elizalde \cite{elizalde1998multidimensional}, exponentially convergent series expansion that allow for the efficient evaluation of the Epstein zeta function in any dimension have been developed. A combination of Mellin transform and Poisson summation yields Crandall's formula~\cite{crandall2012unified}, constituting the most modern approach. Here, we compactly reformulate it as follows.
\begin{theorem}[Crandall's formula]\label{thm:crandall}
Let $\Lambda=A \mathds Z^d$ with $A\in \mathds R^{d\times d}$ regular, $\bm x,\bm y\in \mathds R^d$, and $\nu\in \mathds C\setminus\{d\}$. Then for any $\lambda>0$,
\begin{align*}
     Z_{\Lambda,\nu}\left\vert \begin{matrix}
      \bm x\\\bm y
    \end{matrix}\right\vert = 
    \begin{multlined}[t]
    \frac{(\lambda^2/\pi)^{-\nu/2}}{\Gamma(\nu/2)}\Bigg( \sum_{\bm z\in \Lambda} G_\nu( (\bm z-\bm x)/\lambda )e^{-2\pi i\bm y\cdot \bm z}  \notag \\ +\frac{\lambda^d}{V_\Lambda}\sum_{\bm k\in \Lambda^\ast} G_{d-\nu}\big(\lambda(\bm y-\bm k)\big) e^{-2\pi i \bm x \cdot (\bm y-\bm k)}\Bigg),
    \end{multlined}
\end{align*}
with the reciprocal lattice $\Lambda^\ast = A^{-T} \mathds Z^d$. The function $G_\nu$ is defined as
\begin{align*}
G_{\nu}(\bm z) = \frac{\Gamma(\nu/2,\pi \bm z ^2)}{(\pi \bm z^2)^{\nu/2}}=\int \limits_{-1}^1  \, |t|^{-\nu} e^{-\pi \bm z^2 / t^2}\,\frac{\mathrm d t}{|t|},\quad \bm z\in \mathds R^d\setminus \{\bm 0\}, 
\end{align*}
and $G_\nu(\bm 0) = -2/\nu$. Here $\Gamma(\nu,z)$ denotes the upper incomplete Gamma function.
\end{theorem}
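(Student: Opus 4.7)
The plan is to derive the identity from the standard Mellin integral representation of $|\bm r|^{-\nu}$ combined with Poisson summation, using a Gaussian split at scale $\lambda$. Concretely, for $\mathrm{Re}(\nu) > d$ and $\bm r \neq \bm 0$, I would start from
\[
\frac{\Gamma(\nu/2)}{\pi^{\nu/2}}\,|\bm r|^{-\nu} \;=\; \int_0^\infty t^{\nu/2-1}\, e^{-\pi t|\bm r|^2}\,\mathrm{d}t,
\]
and split the $t$-integral at $t=1/\lambda^2$. On the tail $t\in[1/\lambda^2,\infty)$, the substitution $t\mapsto t/\lambda^2$ gives exactly $\lambda^{-\nu}G_\nu(\bm r/\lambda)$ after recognizing the upper incomplete Gamma integral; with $\bm r=\bm z-\bm x$ and the oscillatory weight $e^{-2\pi i \bm y\cdot\bm z}$, this produces the first sum of the statement once one multiplies by $(\lambda^2/\pi)^{-\nu/2}/\Gamma(\nu/2)$ and uses $\pi^{\nu/2}\lambda^{-\nu} = (\lambda^2/\pi)^{-\nu/2}$.

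For the head integral $t\in(0,1/\lambda^2]$, I would interchange the sum and integral (legitimate for $\mathrm{Re}(\nu)>d$ by absolute convergence) and apply Poisson summation to the Gaussian $\bm z\mapsto e^{-\pi t|\bm z-\bm x|^2}e^{-2\pi i \bm y\cdot\bm z}$. Its Fourier transform at $-\bm k\in\Lambda^\ast$ equals $e^{-2\pi i (\bm y-\bm k)\cdot\bm x}\,t^{-d/2}\,e^{-\pi|\bm y-\bm k|^2/t}$, so the direct-space Gaussian lattice sum becomes a reciprocal-space sum with prefactor $1/V_\Lambda$. The remaining integral $\int_0^{1/\lambda^2} t^{(\nu-d)/2-1}e^{-\pi|\bm y-\bm k|^2/t}\,\mathrm{d}t$ is turned by $t\mapsto 1/t$ into $\lambda^{d-\nu}G_{d-\nu}(\lambda(\bm y-\bm k))$, which, after the overall prefactor, reproduces the second sum with the stated phase $e^{-2\pi i \bm x\cdot(\bm y-\bm k)}$ and $\lambda^d/V_\Lambda$.

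Combining both halves gives the identity for $\mathrm{Re}(\nu)>d$. Two coincidence terms then require care: if $\bm x\in\Lambda$ the term $\bm z=\bm x$ is excluded from the primed Dirichlet series but must be reinserted in the direct-space sum via the limit value $G_\nu(\bm 0) = -2/\nu$, obtained by evaluating $\int_1^\infty t^{\nu/2-1}\,\mathrm{d}t$ for $\mathrm{Re}(\nu)<0$ and analytically continuing. The analogous issue arises for $\bm k=\bm y$ in the reciprocal sum via $G_{d-\nu}(\bm 0) = -2/(d-\nu)$, which is precisely why the hypothesis $\nu\neq d$ must be imposed. Both lattice sums on the right-hand side converge absolutely and uniformly on compacta of $\nu$ away from $0$ and $d$ thanks to the super-exponential decay of $G_\nu$, so the identity extends to $\nu\in\mathds C\setminus\{d\}$ by analytic continuation; the apparent pole of $G_\nu(\bm 0)$ at $\nu=0$ is harmless because it is cancelled by the $1/\Gamma(\nu/2)$ prefactor.

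The main obstacles I expect are (i) rigorously justifying the interchange of summation, integration, and Poisson summation on the head piece, which requires uniform Gaussian tail bounds as $t\downarrow 0$, and (ii) handling the coincidence terms $\bm z=\bm x$ and $\bm k=\bm y$ consistently across the regime $\mathrm{Re}(\nu)>d$ and the continued regime, so that the pole structure matches: the would-be pole at $\nu=0$ is removed by $\Gamma(\nu/2)^{-1}$ while a genuine pole at $\nu=d$ survives, and these bookkeeping steps must be executed cleanly before declaring validity on all of $\mathds C\setminus\{d\}$.
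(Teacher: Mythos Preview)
Your proposal is correct and follows precisely the classical Riemann splitting plus Poisson summation argument that underlies Crandall's formula. The paper does not give a separate proof of this theorem---it is presented as a compact reformulation of the known result from~\cite{crandall2012unified}---but the proof of the generalized version (\cref{thm:generalized-crandall}) proceeds by the same Mellin-transform split at scale $\lambda$, with the Poisson summation step replaced by pairing against the distributional form factor $\hat{\mathds 1}_L$; for a full lattice $\Lambda$ this form factor is the Dirac comb on $\Lambda^\ast$, and the convolution collapses exactly to your reciprocal-lattice sum, so the two arguments coincide.
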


Due to the super-exponential decay of $G_\nu$, summation both in real and in reciprocal space can be restricted to lattice elements close to $\bm 0$, allowing for an efficient and precise numerical evaluation. 

The effectiveness of Crandall's formula is based on Poisson summation, which fundamentally utilizes the periodicity of the lattice, which means that if $\bm z\in \Lambda$, then $\bm z+\Lambda=\Lambda$. A significantly more challenging task arises when the summation is restricted to a non-translationally invariant subset $L$ of a lattice, for instance  $L = \mathds{N}^d \subseteq \mathds{Z}^d$. The resulting zeta functions are of central importance in high-energy physics and frequently appear in the zeta function regularization method for path integrals developed by Hawking \cite{hawking1977zeta}. Computing these lattice sums is a well-known hard problem, for which, so far, only asymptotic expansions in the specific case \( L = \mathds{N}^2 \) have been developed \cite{elizalde2012spectral_zeta}. Only few analytic results for such lattice sums have been derived to date, most notably by Glasser and Zucker \cite{glasser1973evaluation,zucker2017exact}.  

In this work we solve this long-standing problem by providing an exponentially convergent series representation of the arising zeta functions for a vast range of lattice subsets, including corners and parallelepipeds.

\section{Main result: Computing non-translationally invariant lattice sums}
\label{sec:main_result}
The periodicity of lattices forbids the presence of boundaries. Hence, in order to describe realistic finite material structures, a more general concept is needed, namely uniformly discrete sets. 
\begin{definition}[Uniformly discrete sets]
    Let $L\subseteq \mathds R^d$. The set $L$ is said to be uniformly discrete if there exists $\varepsilon>0$ such that the Euclidean distance between any two distinct points $\bm x,\bm y\in L$ satisfies
    $\vert \bm x-\bm y\vert >\varepsilon$.
\end{definition}
In this section, we provide a general framework for computing oscillatory sums over uniformly discrete sets $L\subseteq \mathds R^d$ as described by the following zeta function.
\begin{definition}[Set zeta function]
Let $L\subseteq \mathds R^d$ be uniformly discrete, $\bm y\in \mathds R^d$, and $\nu\in \mathds C$ with $\mathrm{Re}(\nu)>d$. We then define the set zeta function 
\[
Z_{L,\nu}(\bm y)=\sideset{}{'}\sum_{\bm z\in L}\frac{e^{-2\pi i \bm y\cdot \bm z}}{\vert \bm z \vert^\nu}.
\]
\end{definition}
Note that the zeta function for the set $L$ is immediately related to the Epstein zeta function if $L$ is a shifted lattice.
\begin{remark}
  \label{rem:shift_by_x}
    Let $L\subseteq \mathds R^d$ be a lattice. Then the Epstein zeta function and the zeta function for the uniformly discrete set $L-\bm x$ with $\bm x\in \mathds R^d$ are connected as follows,
    \[
    Z_{L,\nu}\left \vert \begin{matrix} 
    \bm x \\ \bm y
    \end{matrix} \right \vert = e^{-2\pi i \bm y\cdot \bm x} Z_{L-\bm x,\nu}(\bm y).
    \]
    By the above equality, we extend the definition of the Epstein zeta function to arbitrary uniformly discrete sets $L$.
\end{remark}
Note that the set zeta function does not require an additional parameter $\bm x$, compared to the Epstein zeta function, rendering the presentation more condensed. This work will put its focus on the particularly important case that $L$ is a shifted subset of a lattice, $L+\bm x\subseteq \Lambda$, with $\bm x\in \mathds R^d$. The theory that we build, however, applies to arbitrary uniformly discrete sets.

Our work is based on the theory of tempered distributions, which we review in \cref{subsec:preliminaries}. In this context, the Fourier transform is defined as follows.
\begin{definition}[Fourier transformation]
For an integrable function $f : \mathds R^d \to \mathds C$ the Fourier transformation $\mathcal F f = \hat f$
is given by
\begin{equation*}
\hat f(\bm \xi) = \int_{\mathds R^d} f(\bm x) e^{-2 \pi i \bm x \cdot \bm \xi} \, \text d \bm x,
\quad \bm \xi \in \mathds R^d.
\end{equation*}
\end{definition}
This definition is then extended to tempered distributions by duality. Of particular importance in our treatment are discrete measures. 
\begin{definition}[Generalized Dirac comb and form factor]
Let $L\subseteq \mathds R^d$ be uniformly discrete. We define the generalized Dirac comb for $L$ as
\[
{\mathds 1}_L = \sum_{\bm z\in L} \delta_{\bm z},
\]
with $\delta_{\bm z}$ the Dirac delta distribution. The generalized Dirac Comb then defines a tempered distribution,  
whose Fourier transformation $\hat {\mathds 1}_L$ is called the form factor. Of particular importance is the set $L^\ast=\singsupp \hat {\mathds 1}_L$,
outside of which the form factor can be identified by a smooth function.
\end{definition}

Equipped with the form factor $\hat {\mathds 1}_L$, we now present the first main result of this work, the generalized Crandall formula for set zeta functions.

\begin{theorem}[Set zeta Crandall]
\label{thm:generalized-crandall}
Let $L\subseteq \mathds R^d$ be uniformly discrete and $\bm y\in \mathds R^d\setminus L^\ast$.
Then $Z_{L,\nu}(\bm y)$ can be extended to a meromorphic function in $\nu\in \mathds C$ via the representation
\begin{align*}
Z_{L,\nu}(\bm y)  = 
\frac{\pi^{\nu/2}}{\lambda^{\nu}\Gamma(\nu/2)}\bigg(\,\sideset{}{'}\sum_{\bm z\in L}
e^{-2\pi i \bm y\cdot \bm z } G_\nu\big( \bm z/\lambda\big)
+ \lambda^d \Big(\hat {\mathds 1}_L\ast G_{d-\nu}(\lambda \,\bm \cdot )\Big)(\bm y)\bigg).
\end{align*}
For fixed $\nu$, $Z_{L, \nu}$ is smooth on $\mathds R^d \setminus L^*$.
\end{theorem}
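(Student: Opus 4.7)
The plan is to establish the formula first in the absolutely convergent regime $\mathrm{Re}(\nu) > d$ via a Mellin-transform-and-split approach analogous to classical Crandall, and then bootstrap to the meromorphic extension in $\nu$ and the smoothness claim in $\bm y$ using the super-exponential decay of $G_\nu$ together with the structure of the form factor $\hat{\mathds 1}_L$. Starting from
\begin{equation*}
\vert \bm z \vert^{-\nu} = \frac{\pi^{\nu/2}}{\Gamma(\nu/2)} \int_0^\infty t^{\nu/2 - 1}\, e^{-\pi \bm z^2 t}\, \mathrm d t, \qquad \bm z \neq \bm 0,\ \mathrm{Re}(\nu) > 0,
\end{equation*}
I insert this identity into $Z_{L,\nu}(\bm y)$, use absolute convergence for $\mathrm{Re}(\nu) > d$ to swap sum and integral, and split the $t$-integral at $t = 1/\lambda^2$. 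The substitution $t = 1/u^2$ converts $\int_{1/\lambda^2}^\infty t^{\nu/2-1} e^{-\pi \bm z^2 t}\, \mathrm d t$ into $\lambda^{-\nu} G_\nu(\bm z / \lambda)$, so the high-$t$ piece reassembles into the direct-space contribution $\sideset{}{'}\sum_{\bm z \in L} e^{-2\pi i \bm y\cdot \bm z}\, G_\nu(\bm z/\lambda)$ up to the overall factor $\pi^{\nu/2}/(\lambda^\nu\Gamma(\nu/2))$.

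For the low-$t$ piece, I would interchange summation and integration and then invoke Fourier duality. The inner sum $\sum_{\bm z \in L} e^{-2\pi i \bm y\cdot \bm z}\, e^{-\pi \bm z^2 t}$ is the pairing of $\mathds 1_L$ against the Schwartz function $\bm z \mapsto e^{-2\pi i \bm y\cdot \bm z}\, e^{-\pi \bm z^2 t}$, whose Fourier transform is $t^{-d/2} e^{-\pi(\bm y + \bm\xi)^2/t}$. Using $\langle \mathds 1_L, \phi\rangle = \langle \hat{\mathds 1}_L, \check \phi\rangle$ with $\check\phi(\bm\xi) = \hat\phi(-\bm\xi)$ rewrites the sum as the distributional pairing of $\hat{\mathds 1}_L(\bm\xi)$ against $t^{-d/2} e^{-\pi(\bm y - \bm\xi)^2/t}$. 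Swapping this pairing with the remaining $\int_0^{1/\lambda^2}$ (legitimate for $\bm y \notin L^*$, since the Gaussian family is Schwartz in $\bm\xi$ with uniform bounds in $t \in [0,1/\lambda^2]$) and applying the same $t = 1/u^2$ substitution collapses the inner $t$-integral into $\lambda^{d-\nu} G_{d-\nu}(\lambda(\bm y - \bm\xi))$. The result is precisely $\lambda^{d-\nu}(\hat{\mathds 1}_L \ast G_{d-\nu}(\lambda\,\bm\cdot))(\bm y)$; the mismatch between the primed sum in $Z_{L,\nu}(\bm y)$ and the unprimed theta sum at $\bm z = \bm 0$ is absorbed, as in classical Crandall, by the convention $G_\nu(\bm 0) = -2/\nu$.

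Meromorphic continuation and smoothness then fall out once the right-hand side is shown to make sense globally. The direct sum is entire in $\nu$ and smooth in $\bm y$ on all of $\mathds R^d$, because $G_\nu(\bm z/\lambda) = \Gamma(\nu/2,\pi\bm z^2/\lambda^2)/(\pi\bm z^2/\lambda^2)^{\nu/2}$ is entire in $\nu$ for fixed $\bm z \neq \bm 0$ and decays super-exponentially in $\vert \bm z\vert$, yielding locally uniform convergence on $\mathds C \times \mathds R^d$. The only pole of the assembled expression arises from $G_{d-\nu}(\bm 0) = -2/(d-\nu)$ at $\nu = d$, recovering the known simple pole of the Dirichlet series. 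The genuinely hard step, and the one that replaces Poisson summation in the translationally invariant setting, is making rigorous sense of the convolution $(\hat{\mathds 1}_L \ast G_{d-\nu}(\lambda\,\bm\cdot))(\bm y)$ for a general tempered distribution $\hat{\mathds 1}_L$ and proving its smoothness in $\bm y$ on $\mathds R^d \setminus L^*$. I would exploit the defining property $L^* = \singsupp \hat{\mathds 1}_L$: in a neighborhood of any $\bm y_0 \notin L^*$, decompose $\hat{\mathds 1}_L = T_{\mathrm{smooth}} + T_{\mathrm{sing}}$ with $T_{\mathrm{sing}}$ supported outside a neighborhood of $\bm y_0$, and observe that the super-exponential Gaussian decay of $G_{d-\nu}$ controls both the smooth part, which produces a $C^\infty$ function in $\bm y$, and the tempered singular part, using the polynomial growth of $\hat{\mathds 1}_L$ and its derivatives. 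This simultaneously gives the pointwise definition of the convolution, smoothness of $Z_{L,\nu}$ on $\mathds R^d \setminus L^*$, and entire dependence on $\nu$ away from $\nu = d$.
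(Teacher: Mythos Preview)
Your Mellin-split-and-Fourier approach in the convergent regime matches the paper's strategy, though the paper is more careful: it introduces an $e^{-\pi\varepsilon^2|\bm z|^2}$ regularization and restricts initially to $\Re\nu > d+k+1$ (with $k$ the order of $\hat{\mathds 1}_L$), precisely because the interchange you want is \emph{not} justified by ``uniform Schwartz bounds in $t$''. As $t\to 0^+$ the family $t^{-d/2}e^{-\pi(\bm y-\bm\xi)^2/t}$ is a Dirac sequence, so its Schwartz seminorms blow up; what one actually needs is that the integrated kernel $G_{d-\nu}(\lambda(\bm y-\bm\cdot))$ lie in $\mathcal S^k(\mathds R^d)$, which only holds for $\Re\nu$ sufficiently large. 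This is a fixable technicality, and your singular-support decomposition in the last paragraph would also patch it for $\Re\nu>d$ once written out properly.

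The genuine gap is the meromorphic continuation. Your argument rests entirely on the super-exponential \emph{decay} of $G_{d-\nu}$ at infinity, but the obstruction to extending the convolution in $\nu$ is the behaviour of $G_{d-\nu}$ at the \emph{origin}: for $\Re\nu<d$ one has $G_{d-\nu}(\bm z)\sim \Gamma((d-\nu)/2)\,(\pi\bm z^2)^{(\nu-d)/2}$ as $\bm z\to\bm 0$, which is non-smooth for $\Re\nu<d$ and not even locally integrable once $\Re\nu\le 0$. In your decomposition $\hat{\mathds 1}_L = T_{\mathrm{smooth}}+T_{\mathrm{sing}}$, the singularity of $G_{d-\nu}$ sits exactly on the $T_{\mathrm{smooth}}$ side (at $\bm\xi=\bm y$), so the pairing $\int T_{\mathrm{smooth}}(\bm\xi)\,G_{d-\nu}(\lambda(\bm y-\bm\xi))\,d\bm\xi$ diverges there and your ``entire dependence on $\nu$ away from $\nu=d$'' does not follow. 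The paper closes this gap with an additional result (the Hadamard integral representation, \cref{cor:hadamard_int_rep}): it writes the convolution as $\frac{2}{\Gamma(\nu/2)}\ddashint_0^1 t^{\nu-d-1}\psi_L(t/\lambda,\bm y)\,dt$ with $\psi_L$ smooth in both arguments, and the Hadamard finite-part integral together with the $1/\Gamma(\nu/2)$ prefactor then furnishes the analytic continuation to all of $\mathds C$. That mechanism, or something equivalent to it, is missing from your sketch.
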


Similar to the Crandall formula in \cref{thm:crandall}, the sum is split into a contribution in real space and a contribution in Fourier space. The computation of the real space sum is simple due to the superexponential decay of $G_\nu$, which allows us to suitably truncate the sum.
The difficulty in computing the zeta function thus lies in the computation of the convolution in Fourier space. We approach this challenge by first transforming it into a one-dimensional integral involving an $\nu$-independent function.
\begin{theorem}[Hadamard integral representation]
\label{cor:hadamard_int_rep}
For $\bm y\in \mathds R^d\setminus L^\ast$, the convolution in \cref{thm:generalized-crandall} admits the analytic representation
\[
\frac{1}{\Gamma(\nu / 2)}\big(\hat {\mathds 1}_L\ast G_{d-\nu}\big)(\bm y)= \frac{2}{\Gamma(\nu / 2)} \ddashint_{0}^{1} t^{\nu-d-1} \psi_L (t/\lambda,\bm y)\,\mathrm d t,
\]
with $\psi_L\in C^\infty\big([0,1]\times (\mathds R^d\setminus L^\ast)\big)$,
\[
\psi_L(t,\bm y)=\big(\hat {\mathds 1}_{L}\ast e^{-\pi \bm \cdot^2/t^2}\big)(\bm y),\quad t>0,
\]
and smoothly extended to $t=0$.
Furthermore, the right hand side is a smooth function outside $L^*$ for any $\nu \in \mathds C$.
\end{theorem}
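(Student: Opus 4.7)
The plan is to derive the representation from the defining integral of $G_{d-\nu}$, exchange the one-dimensional $t$-integral with the distributional convolution in the absolutely convergent regime, and then meromorphically continue to all $\nu\in\mathds C$ via the Hadamard finite part, once $\psi_L$ has been shown to be smooth up to $t=0$ away from $L^\ast$.

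First I would start from the elementary identity
\begin{equation*}
G_{d-\nu}(\lambda \bm z) \;=\; 2 \int_0^1 t^{\nu - d - 1}\, e^{-\pi \bm z^2/(t/\lambda)^2}\, \mathrm d t, \qquad \bm z \neq \bm 0,
\end{equation*}
obtained by substituting $\mu = d - \nu$ in the integral definition of $G_\mu$. For $\mathrm{Re}(\nu) > d$ the factor $t^{\nu-d-1}$ is integrable on $(0,1]$, and $t \mapsto e^{-\pi \bm\cdot^2/(t/\lambda)^2}$ defines a continuous Schwartz-valued function of $t$ on $(0,1]$. This justifies exchanging the $t$-integration with the distributional pairing against $\hat{\mathds 1}_L$ and yields the claimed identity with an ordinary integral in place of $\ddashint$.

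Next I would establish that $\psi_L \in C^\infty\bigl([0,1] \times (\mathds R^d\setminus L^\ast)\bigr)$. For $t > 0$ joint smoothness is immediate from the Schwartz character of the Gaussian. To extend smoothly up to $t=0$ at a point $\bm y_0\notin L^\ast$, I would choose $\chi \in C^\infty_c(\mathds R^d)$ with $\chi \equiv 1$ on a neighbourhood $U$ of $\bm y_0$ and $\supp \chi \cap L^\ast = \emptyset$, and decompose
\begin{equation*}
\psi_L(t,\bm y) \;=\; \bigl((\chi f) \ast e^{-\pi \bm\cdot^2/t^2}\bigr)(\bm y) \;+\; \bigl\langle (1-\chi)\,\hat{\mathds 1}_L,\; e^{-\pi(\bm y - \bm\cdot)^2/t^2}\bigr\rangle,
\end{equation*}
where $f$ is the smooth function on $\mathds R^d\setminus L^\ast$ representing $\hat{\mathds 1}_L$ there. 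After the rescaling $\bm\xi = \bm y + t\bm u$, the first term becomes $t^d\int (\chi f)(\bm y + t\bm u)\, e^{-\pi \bm u^2}\,\mathrm d\bm u$, manifestly jointly smooth in $(t,\bm y)$ and vanishing to order $d$ at $t=0$. For the second term, on $\bm y\in U$ the test function is supported where $|\bm y - \bm\cdot|\geq c$ for some $c>0$, so every Schwartz seminorm of $(1-\chi(\bm\cdot))\, e^{-\pi(\bm y - \bm\cdot)^2/t^2}$ is $O(e^{-c'/t^2})$ uniformly in $\bm y\in U$, and pairing against the tempered distribution $(1-\chi)\,\hat{\mathds 1}_L$ produces a contribution that is flat at $t=0$. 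Together these yield the smooth extension with $\psi_L(0,\bm y) = 0$.

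Finally, with $\psi_L$ smooth on $[0,1]\times(\mathds R^d\setminus L^\ast)$, the Hadamard finite part $\ddashint_0^1 t^{\nu-d-1}\psi_L(t/\lambda, \bm y)\,\mathrm d t$ is well defined and meromorphic in $\nu\in\mathds C$ with at worst simple poles, which are cancelled by the $\Gamma(\nu/2)^{-1}$ prefactor; smoothness in $\bm y$ outside $L^\ast$ for each fixed $\nu$ follows from the same smoothness of $\psi_L$ in its second argument together with uniform bounds on derivatives in $\bm y$. Both sides of the claimed identity are then meromorphic in $\nu$ and agree on $\{\mathrm{Re}(\nu)>d\}$ by the first step, hence on all of $\mathds C$ by the identity theorem. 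The main technical obstacle is precisely the $C^\infty$ extension of $\psi_L$ up to $t=0$ outside $L^\ast$: it is this step that converts the qualitative smoothness of $\hat{\mathds 1}_L$ on $\mathds R^d \setminus L^\ast$ into the uniform flatness of the singular contribution against a concentrating Gaussian that makes the Hadamard regularisation applicable.
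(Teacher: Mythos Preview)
Your approach is the paper's: the same cutoff decomposition of $\psi_L$ into a smooth near part and a far part flat at $t=0$, followed by Hadamard continuation. Two points need repair.

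First, the exchange of the $t$-integral with the distributional pairing is not justified at the threshold $\Re\nu>d$. The Schwartz seminorms of $e^{-\pi\,\bm\cdot^{\,2}/t^2}$ involving $|\bm\beta|$ derivatives scale like $t^{-|\bm\beta|}$ as $t\to 0$, so Bochner integrability of $t\mapsto t^{\nu-d-1}e^{-\pi(\bm y-\bm\cdot)^2/t^2}$ against a form factor of order $k$ requires $\Re\nu>d+k$. This is harmless for the identity-theorem step, since any open half-plane suffices, but it should be stated correctly; the paper handles it by approximating $G_{d-\nu}$ via Riemann sums converging in $\mathcal S^k$.

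Second, and this is the real gap, your pole-cancellation claim is unproved. You establish $\psi_L(t,\bm y)=t^d g(t,\bm y)+(\text{flat})$ with $g$ smooth; feeding this into the Hadamard integral produces simple poles of $\ddashint_0^1 t^{\nu-1}g\,\mathrm d t$ at every $\nu\in -\mathds N_0$, whereas $1/\Gamma(\nu/2)$ vanishes only at $\nu\in -2\mathds N_0$, so odd negative integers would survive. What you are missing is that $g$ depends on $t$ only through $t^2$: the Gaussian is even in $t$, and your near-part integral $\int(\chi f)(\bm y+t\bm u)\,e^{-\pi\bm u^2}\,\mathrm d\bm u$ is even in $t$ by the substitution $\bm u\mapsto -\bm u$. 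Once you write $\psi_L(t,\bm y)=t^d\eta(t^2,\bm y)$ with $\eta$ smooth, the change of variable $\tau=t^2$ turns the expression into $\Gamma(\nu/2)^{-1}\ddashint_0^1\tau^{\nu/2-1}\eta(\tau/\lambda^2,\bm y)\,\mathrm d\tau$, whose poles sit exactly at the zeros of $1/\Gamma(\nu/2)$ and are therefore removed. The paper isolates precisely this $t^d\eta(t^2,\bm y)$ structure (as the addendum to its smoothness lemma for even test functions) before invoking the Hadamard integral.
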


Here, the dashed integral denotes the meromorphic continuation of the integral to $\nu \in \mathds C$, referred to as the Hadamard integral, see~\cite{gelfand1964generalizedI}. 
The formula for $\psi_L$ can be related to a generalization of the Faddeeva function, which is explained in more detail in Section \ref{sec:generrfuneval}.

Under the condition that we can efficiently compute both $\psi_L$ and the Hadamard integral, the above formula serves as the basis for the computation of the set zeta function. Our focus now narrows to the particularly important case where $L$ is a shifted subset of a lattice $\Lambda$ with boundaries, which we refer to as lattice cuts. The prototypical geometry, from which all relevant lattice cuts can be generated, is the corner $L=A \Nz^d-\bm x$ with $\bm x\in \mathds R^d$. 
\begin{theorem}
Let $L=A \Nz^d-\bm x$, with $\bm x\in \mathds R^d$. \cref{thm:generalized-crandall} then extends to $\bm y\in L^*$, with
\begin{align*}
     L^\ast \subseteq
        \{\bm y\in \mathds R^d: A^T \bm y~\text{has at least one integer component}\},
\end{align*}
where the corresponding zeta function is meromorphic is $\nu$ with simple poles at $\nu \in (d-\N_0) \setminus (-2\N_0)$.
\end{theorem}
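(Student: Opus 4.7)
The plan is to exploit the product structure of $L = A\Nz^d - \bm x$. Since $\mathds{1}_{\Nz^d} = \bigotimes_{j=1}^d \mathds{1}_{\Nz}$ and the usual translation/pushforward rules apply to the Fourier transform of a Dirac comb, the form factor factorizes as
\[
\hat{\mathds{1}}_L(\bm y) = e^{2\pi i \bm y \cdot \bm x}\prod_{j=1}^d \hat{\mathds{1}}_{\Nz}\!\big((A^T\bm y)_j\big),
\]
so the singular support of $\hat{\mathds{1}}_L$ is controlled entirely by the one-dimensional factor $\hat{\mathds{1}}_{\Nz}$. I would compute the latter via Abel summation and Sokhotski--Plemelj, obtaining a decomposition $\hat{\mathds{1}}_{\Nz}(\xi) = \tfrac{1}{2}\mathds{1}_{\Z}(\xi) + r(\xi)$ with $r$ locally integrable and smooth off $\Z$. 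Because singular supports of tensor products are unions of singular supports, this yields $L^*\subseteq\{\bm y : A^T\bm y \text{ has at least one integer component}\}$.

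For the meromorphic structure in $\nu$, I would feed the Hadamard integral representation of \cref{cor:hadamard_int_rep} into \cref{thm:generalized-crandall}. For $\bm y\notin L^*$, $\psi_L(\cdot,\bm y)$ is smooth at $t=0$, and inserting its Taylor expansion $\psi_L(t/\lambda,\bm y) = \sum_{k\geq 0} c_k(\bm y)(t/\lambda)^k$ together with the identity $\ddashint_0^1 t^{s-1}\,\mathrm{d}t = 1/s$ shows that the Hadamard integral is meromorphic in $\nu$ with at worst simple poles at $\nu\in d-\Nz$. The prefactor $1/\Gamma(\nu/2)$ has simple zeros on $-2\Nz$, cancelling the candidate poles there and leaving exactly the claimed set $(d-\Nz)\setminus(-2\Nz)$.

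Extending the representation to $\bm y\in L^*$ is then a $2^d$-fold refinement of the same strategy. Splitting $\hat{\mathds{1}}_{\Nz} = \tfrac{1}{2}\mathds{1}_{\Z}+r$ in every coordinate, the form factor becomes a sum indexed by subsets $S\subseteq\{1,\ldots,d\}$: on coordinates in $S$ the lattice-like factor contributes Dirac masses (a partial Poisson resummation on those axes), while on the complementary coordinates the integrable part $r$ is convolved against the Gaussian in $\psi_L$ and produces a smooth function of the remaining variables. Near a singular $\bm y\in L^*$ this rewrites the Hadamard integrand as a finite sum of Crandall-type contributions of strictly lower dimension, each smooth at $t=0$ in its remaining variables and therefore meromorphic in $\nu$ with at most the same poles.

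The main obstacle is controlling the interchange of regularizations at the singular locus: the part $r$ of $\hat{\mathds{1}}_{\Nz}$ blows up at integers, and one must verify that its principal-value interpretation commutes with both the convolution against $G_{d-\nu}(\lambda\,\cdot)$ and the Hadamard regularization in $t$. The cleanest route I foresee is to carry out the partial Poisson resummation inside the theta-like sum representation of $\psi_L$ \emph{before} taking the Mellin transform in $t$, so that all manipulations occur on ordinary smooth functions and the pole analysis reduces to the $\bm y\notin L^*$ case already handled.
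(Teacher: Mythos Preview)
Your route is genuinely different from the paper's. The paper does not use a Sokhotski--Plemelj splitting of $\hat{\mathds 1}_{\Nz}$ or a $2^d$-fold partial Poisson resummation. Instead, it establishes the single key fact that $t\mapsto\psi_L(t,\bm y)$ extends smoothly to $t=0$ for \emph{every} $\bm y\in\mathds R^d$ (not only $\bm y\notin L^*$). This is done by working directly with the theta-like sum $t\sum_{z\in\Nz}\hat\varphi(t(z-x))$ in one dimension and applying the Euler--Maclaurin expansion (\cref{thm:euler-maclaurin}) to split it into an integral, a polynomial-in-$t$ boundary operator, and a remainder that vanishes to arbitrary finite order at $t=0$ (\cref{lem:smooth_for_all_y}); a tensor-product lemma then lifts this to $d$ dimensions. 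Once $\psi_L$ is smooth at $t=0$ for all $\bm y$, the Hadamard integral immediately gives the meromorphic structure with simple poles at $\nu\in d-\Nz$, and $1/\Gamma(\nu/2)$ removes those in $-2\Nz$. The $L^*$ inclusion is proved separately, exactly via the tensor factorization you describe.

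Your alternative can be made to work but needs two repairs. First, $r=\hat{\mathds 1}_{\Nz}-\tfrac12\mathds 1_{\Z}=\tfrac12-\tfrac{i}{2}\,\mathrm{p.v.}\cot(\pi\,\cdot)$ is not locally integrable; it is a principal-value distribution of order one. Your later claim that the $r$-factors convolved with the scaled Gaussian are ``smooth in the remaining variables'' at a singular $\bm y$ is exactly the nontrivial point, and to justify it you will end up proving precisely that $(r\ast e^{-\pi\,\cdot^2/t^2})(k)$ for $k\in\Z$ is smooth at $t=0$ --- which is the one-dimensional content of the paper's Euler--Maclaurin lemma in different clothing. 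Second, your sentence ``singular supports of tensor products are unions of singular supports'' is not correct as stated; the inclusion you need is $\singsupp(u_1\otimes\cdots\otimes u_d)\subseteq\bigcup_j\bigl(\prod_{i<j}\supp u_i\times\singsupp u_j\times\prod_{i>j}\supp u_i\bigr)$, which does give the claimed set since $\supp\hat{\mathds 1}_{\Nz}=\mathds R$ and $\singsupp\hat{\mathds 1}_{\Nz}=\mathds Z$. What your decomposition buys over the paper's argument is a transparent link to the full-lattice Crandall formula on the coordinates in $S$; what it costs is that the mixed terms involving $r$ still require an Euler--Maclaurin-type smoothness argument, so the saving is organizational rather than technical.
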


Computing $\psi_L$ and thus the zeta function for a non-translationally invariant set is a challenging task. We overcome this obstacle and demonstrate in this work that for $L = A \Nz^d-\bm x$, it is possible to efficiently evaluate $\psi_L$ in any number of space dimensions and thus the related zeta function. This result immediately allows for the computation of general lattice subsets, such as half-planes and parallelepipeds. This is achieved by means of a non-oscillatory integral representation of $\psi_L$.
\begin{theorem}[Integral representation of $\psi_L$]
\label{thm:integral_representation}
Consider $L=A  \Nz^{d}-\bm x $ with $\bm x\in A^{-T}  \mathds R_{< 0}^{d}$, and $\bm y\in \mathds R^d$. Then $\psi_L$ admits an efficiently computable representation, which reads for $t>0$, 
\[
\psi_L(t,\bm y)=  e^{2\pi i\langle \bm y,\bm x\rangle} e^{-\pi \vert t \bm x \vert^2}
\frac{t^{d}}{V_\Lambda}\int \limits_{\mathds R^{d}} \frac{
e^{-\pi  (A^{-T}\bm \xi)^2}}%
{\prod \limits_{j=1}^d \big(1-e^{-2\pi i t (\bm \xi - \bm Y(t))_j}\big)} \,\mathrm d  \bm \xi,
\]
where
\[
\bm Y(t)= -A^T (\bm y/t+i t \bm x).
\]
For all $\bm y\in \mathds R^d$ and $t>0$, the function $\psi_L$ can be analytically continued to $\bm x \in \mathds R^d$.
\end{theorem}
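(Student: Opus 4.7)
The plan is to turn the convolution-form definition of $\psi_L$ into an absolutely convergent lattice sum, use Fourier inversion on the Gaussian factor to trade that sum for an integral, and then collapse the inner sum into a product of one-dimensional geometric series. All manipulations are carried out under the standing hypothesis $A^T\bm x\in\R_{<0}^d$, which supplies the exponential damping that makes every step rigorous; the extension to arbitrary $\bm x\in\R^d$ is obtained afterwards by analytic continuation.

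Starting from $\psi_L(t,\bm y)=(\hat{\mathds 1}_L\ast e^{-\pi\cdot^2/t^2})(\bm y)$, I would pair $\mathds 1_L$ against the Fourier transform of the translated Gaussian $\bm\eta\mapsto e^{-\pi(\bm y-\bm\eta)^2/t^2}$ to obtain the absolutely convergent representation $\psi_L(t,\bm y)=t^d\sum_{\bm z\in L}e^{-\pi t^2\vert\bm z\vert^2-2\pi i\bm y\cdot\bm z}$. Substituting $\bm z=A\bm n-\bm x$ and completing the square factors out $e^{2\pi i\bm y\cdot\bm x}e^{-\pi t^2\vert\bm x\vert^2}$ and leaves $t^d\sum_{\bm n\in\Nz^d}e^{-\pi t^2\vert A\bm n\vert^2}e^{-2\pi i A^T(\bm y+it^2\bm x)\cdot\bm n}$; under $A^T\bm x<0$ the oscillatory exponential has modulus $e^{2\pi t^2(A^T\bm x)\cdot\bm n}$ and therefore decays geometrically along $\Nz^d$. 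I would then insert the Fourier-inversion identity $e^{-\pi t^2\vert A\bm n\vert^2}=(V_\Lambda t^d)^{-1}\int_{\R^d}e^{-\pi\vert A^{-T}\bm\xi\vert^2/t^2}e^{2\pi i\bm\xi\cdot\bm n}\,d\bm\xi$ and interchange summation with integration.

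The swap is the central technical step, but Fubini applies directly because the product of the Gaussian in $\bm\xi$ with the geometrically decaying factor $e^{2\pi t^2(A^T\bm x)\cdot\bm n}$ is jointly summable and integrable. After swapping, each one-dimensional series collapses to $(1-e^{2\pi i(\bm\xi-A^T(\bm y+it^2\bm x))_j})^{-1}$, valid for the same reason. The change of variables $\bm\xi\mapsto-t\bm\xi$ rescales $\vert A^{-T}\bm\xi\vert^2/t^2$ to $\vert A^{-T}\bm\xi\vert^2$, absorbs the new Jacobian $t^d$ against the $t^{-d}$ from Fourier inversion, and rewrites each denominator as $1-e^{-2\pi it(\bm\xi-\bm Y(t))_j}$ with $\bm Y(t)=-A^T(\bm y/t+it\bm x)$. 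Reassembling the prefactors $t^d/V_\Lambda$, $e^{2\pi i\bm y\cdot\bm x}$ and $e^{-\pi\vert t\bm x\vert^2}$ then reproduces the claimed formula.

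For the continuation to arbitrary $\bm x\in\R^d$, the left-hand side is manifestly entire in $\bm x\in\C^d$ for fixed $t>0$ and $\bm y\in\R^d$, since the Gaussian-damped defining sum converges uniformly on compact subsets of $\C^d$. The right-hand side is holomorphic in the complex tube $\{\bm x\in\C^d : A^T\mathrm{Im}(\bm x)<0\text{ componentwise}\}$, which contains the real domain $A^{-T}\R_{<0}^d$ on which equality has just been proved, and the identity theorem propagates the identity throughout this tube. Extending past the real boundary $A^T\bm x=0$ requires deforming the $\bm\xi$-contour to avoid the poles $\xi_j=\bm Y(t)_j+k/t$ ($k\in\Z$), which cross the real axis as $\bm x$ leaves $A^{-T}\R_{<0}^d$; I expect this contour deformation to be the main technical hurdle, since the contour must keep $\mathrm{Im}[t(\bm\xi-\bm Y(t))]$ strictly negative componentwise throughout, and residues picked up by crossed poles must be tracked. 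Iterating a one-dimensional deformation in each coordinate, justified by Fubini on the complex analytic integrand, should then deliver the continuation to all of $\R^d$.
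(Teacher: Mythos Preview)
Your argument is correct and takes a genuinely different route from the paper. The paper works entirely on the Fourier side: it inserts the explicit distributional form factor $\hat{\mathds 1}_{L-\bm x}=e^{2\pi i\bm x\cdot(\bm\cdot)}\bigotimes_j(1-e^{-2\pi i((A^T\cdot)_j-i0^+)})^{-1}$ into the convolution, carries out the resulting $\R^d$-integral by completing the square in the Gaussian exponent, and finally applies Cauchy's theorem (under $\Im\bm Y(t)_j\ge 0$) to shift the contour back to $\R^d$. You instead unwind the convolution to the real-space identity $\psi_L(t,\bm y)=t^d\sum_{\bm z\in L}e^{-\pi t^2|\bm z|^2-2\pi i\bm y\cdot\bm z}$, reintroduce an integral by Fourier-inverting the Gaussian factor, and sum the geometric series inside; convergence and the Fubini swap are justified directly by the componentwise damping $e^{2\pi t^2(A^T\bm x)\cdot\bm n}$ coming from $A^T\bm x<0$. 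The two calculations are dual to one another: your geometric-series step is precisely the form-factor formula evaluated against a Schwartz test function, and your Fubini step plays the role of the paper's Cauchy contour shift. Your version is more self-contained in that it never invokes the $i0^+$ regularization or the form-factor lemma; the paper's version is shorter given the machinery already in place.

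On the analytic continuation in $\bm x$, you actually go further than the paper, which establishes the formula only under $\Im\bm Y(t)_j\ge 0$ and does not spell out the continuation. Your observation that the lattice-sum side is entire in $\bm x\in\C^d$ (Gaussian domination) combined with holomorphy of the integral side on the tube $\{A^T\Im\bm x<0\}$ gives the identification there by the identity theorem; the coordinate-wise contour deformation you sketch for crossing the boundary is the right mechanism and is exactly what the paper implements algorithmically in Section~\ref{sec:generrfuneval} (the residue terms you anticipate are the $e^{-\eta_{k+1}(\cdot)^2}$ summands appearing there).
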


% \begin{remark}[Analytic continuation]
%     The formula for $\psi_L$ can be written as a nested sequence of integrals in $\xi_1,\dots,\xi_d$, where every integral corresponds to a Steltjes transformation. The computation of each of these Steltjes transformations, together with their analytic continuations across their respective branch cuts, is well understood [REF]. More details are given in \cref{sec:algorithm}.
% \end{remark}

A detailed description of the algorithm used to compute the zeta function for the corner is provided in Section~\ref{sec:algorithm}. Here, we discuss how to choose the splitting parameter $\lambda$, compute the Hadamard integral efficiently, and form the analytic continuation of $\psi_L$.

We can construct set zeta functions for numerous geometries from the corner through suitable recombination. The most important case is the parallelepiped.
\begin{corollary}[Parallelepiped zeta function]
    \label{cor:piped}
The zeta function for a parallelepiped-shaped sublattice  $L=A \prod_{j=1}^d \{0,1,\dots, n_j-1\}-\bm x$ with $\bm x\in \mathds R^d$ and $\bm n\in \Nz^d$ can be expressed as a sum of zeta functions for corner cuts, 
\[
Z_{L,\nu}(\bm y)= \sum_{\bm \alpha\in \{0,1\}^d} (-1)^{\bm \alpha} Z_{L_0+A c_{\bm \alpha},\nu}(\bm y), 
\]
with $L_0=A\Nz^d-\bm x$, and $(\bm c_{\bm \alpha})_j=0$ if $\alpha_j=0$ and $(\bm c_{\bm \alpha})_j = n_j$ if $\alpha_j=1$.
\end{corollary}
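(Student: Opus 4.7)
The plan is to prove the identity first for $\mathrm{Re}(\nu)>d$, where the defining Dirichlet series converge absolutely, and then appeal to meromorphic continuation. The key combinatorial tool is a product-form inclusion–exclusion identity for the indicator of the box $\prod_j\{0,\dots,n_j-1\}\subseteq\Z^d$.

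Concretely, I start from the one-dimensional identity
\begin{equation*}
\mathds 1_{\{0,\dots,n_j-1\}}(k)=\mathds 1_{\Nz}(k)-\mathds 1_{\Nz}(k-n_j),\qquad k\in\Z,
\end{equation*}
and take the product over $j=1,\dots,d$. Expanding the product binomially yields
\begin{equation*}
\mathds 1_{\prod_j\{0,\dots,n_j-1\}}(\bm k)
=\sum_{\bm\alpha\in\{0,1\}^d}(-1)^{|\bm\alpha|}\,\mathds 1_{\Nz^d}(\bm k-\bm c_{\bm\alpha}),
\end{equation*}
valid for every $\bm k\in\Z^d$, with $\bm c_{\bm\alpha}$ as in the statement of the corollary.

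Next, for $\mathrm{Re}(\nu)>d$ I insert this identity into the Dirichlet series. Writing $\bm z=A\bm k-\bm x$ and letting $f(\bm k)=e^{-2\pi i\bm y\cdot(A\bm k-\bm x)}/|A\bm k-\bm x|^{\nu}$ for $A\bm k\neq\bm x$ (and $0$ otherwise), I obtain
\begin{equation*}
Z_{L,\nu}(\bm y)
=\sum_{\bm k\in\Z^d}\mathds 1_{\prod_j\{0,\dots,n_j-1\}}(\bm k)\,f(\bm k)
=\sum_{\bm\alpha\in\{0,1\}^d}(-1)^{|\bm\alpha|}\sum_{\bm k\in\Nz^d}f(\bm k+\bm c_{\bm\alpha}),
\end{equation*}
where interchanging the finite $\bm\alpha$-sum with the absolutely convergent $\bm k$-sum is legal. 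The shift $\bm m=\bm k+\bm c_{\bm\alpha}$ turns the inner sum into the Dirichlet series defining $Z_{L_0+A\bm c_{\bm\alpha},\nu}(\bm y)$; the primed-sum convention (exclusion of the origin, if present) is respected automatically since the modified $f$ vanishes where $A\bm k=\bm x$, and at most one of the $2^d$ shifted corners contains the origin.

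Finally, I invoke uniqueness of meromorphic continuation: both sides extend to meromorphic functions of $\nu\in\C$ via \cref{thm:generalized-crandall} applied to the uniformly discrete sets $L$ and $L_0+A\bm c_{\bm\alpha}$ (the latter are exactly shifted corners handled by \cref{thm:integral_representation}). Since the identity holds on the half-plane $\mathrm{Re}(\nu)>d$, it holds on the full domain of meromorphy. I expect no serious obstacle here; the only point requiring a brief comment is the bookkeeping at the origin under the primed sum, which the indicator identity handles correctly because $(\bm c_{\bm\alpha})_j\in\{0,n_j\}$ and the components of a valid box index are strictly less than $n_j$.
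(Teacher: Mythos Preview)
Your proposal is correct and follows essentially the same approach as the paper: both rest on the one-dimensional identity $\{0,\dots,n_j-1\}=\Nz\setminus(\Nz+n_j)$, expand the product over $j$ into the inclusion--exclusion formula for the indicator, and then pass to the zeta function by linearity. The only cosmetic difference is that the paper phrases the last step as ``linearity of convolution and Fourier transform'' (i.e., works directly at the level of the generalized Dirac comb and the Crandall representation, valid for all $\nu$ at once), whereas you first verify the identity on the absolutely convergent Dirichlet series for $\mathrm{Re}(\nu)>d$ and then invoke uniqueness of meromorphic continuation; both routes are equally valid.
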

\begin{proof}
In view of
\[
L=A\prod_{j=1}^d \big(\Nz\setminus 
(\Nz+n_j)\big) -\bm x,
\]
the indicator function of the parallelepiped can be written as a sum of indicator functions of corners 
\[
\mathds 1_L=\sum_{\bm \alpha\in \{0,1\}^d} (-1)^{\bm \alpha} \mathds 1_{L_0+A c_{\bm \alpha} },
\]
from which the statement readily follows due to linearity of convolution and Fourier transform.
\end{proof}
Recombination of corners and parallelepipeds then yields general geometries. By introducing an integral representation of $\psi_L$ for the corner geometry in \cref{thm:integral_representation} that is non-oscillatory and thus numerically tractable, we transform our theoretical insights into an efficient numerical framework. By constructing arbitrary geometries from simple corner units through recombination, we offer a flexible approach for treating general lattice subsets. As a result, our method is universally applicable across any interaction exponent $\nu$, wave vector $\bm y$, spatial dimension $d$, and for an extensive range of geometries. We now benchmark the performance of our method in a physically relevant example. 
\section{Numerical application: Boundary effects in a macroscopic 3D spin structure}
\label{subsec:spin_circuit}

Spin systems with long-range interactions hold enormous technological potential. Defects inside these materials could be used as information carriers in spintronics devices, allowing for higher storage capacities, faster computations, and lower energy consumption compared to conventional semiconductor electronics \cite{finocchio2021promise}. In fundamental physics, spin-ice materials have been discovered where defects behave like magnetic monopoles \cite{Castelnovo2008,Bramwell2009}. The long-range dipolar interaction is highly important in their formation, leading to an effective Coulomb interaction between the monopoles \cite{Bramwell_2020}. It has recently been shown that power-law interactions between spins can be artificially created in a laboratory by placing them inside a cavity, an arrangement of highly reflective mirrors that strongly enhances laser light, where laser photons mediate an effective interaction between spins \cite{gao2021higgs}. These experiments can be used to simulate exotic phases of quantum matter such as superconductors, materials that exhibit no electric resistance and can transport electric current over arbitrary distances. Some of the authors have recently shown that long-range interactions can give rise to exotic topological phases in superconductors that have non-trivial behavior at the boundary \cite{buchheit2023exact}. However, the computation of the long-range interactions in three dimensions with an interaction exponent $\nu$ close to the system dimension, i.e., the dipolar interaction, is well-known to be notoriously difficult \cite{bramwell2001spin}.

\begin{figure}
    \centering
    \includegraphics[width=1.0\textwidth]{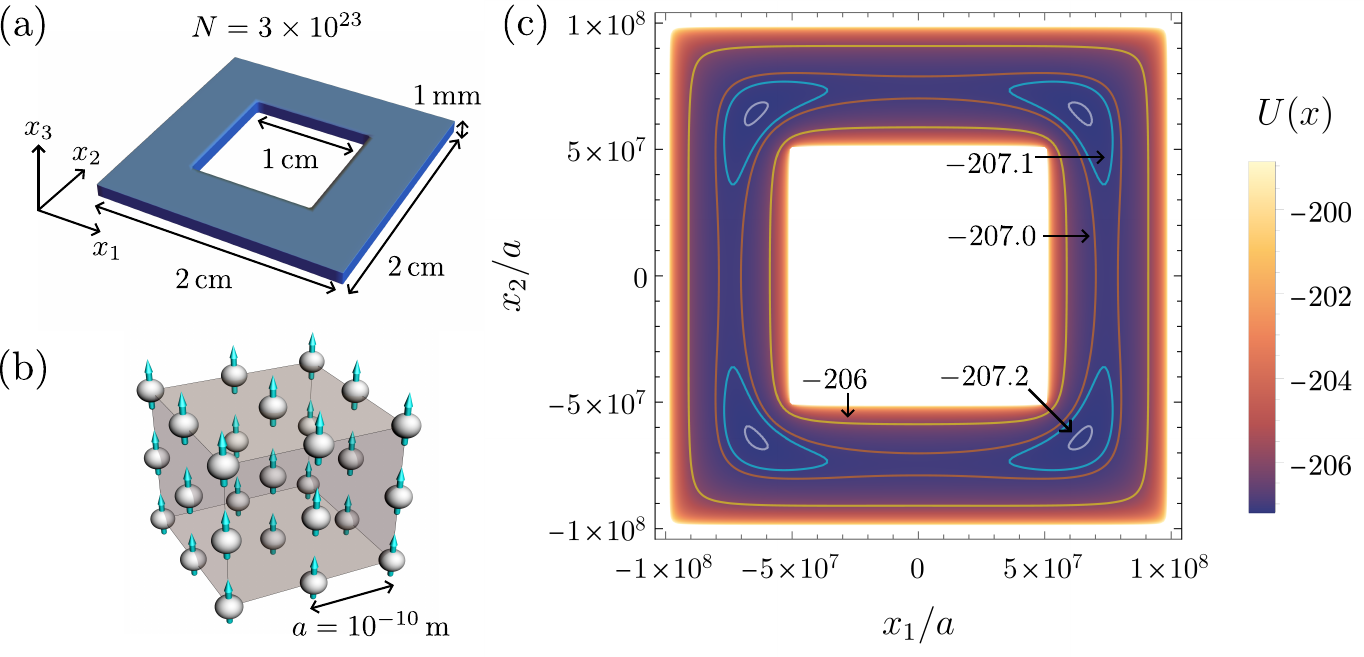}
    \caption{(a) Schematic depiction of a macroscopic three-dimensional spin circuit with inner edge length $1\,\mathrm{cm}$, outer edge length $2\,\mathrm{cm}$, and thickness $1\,\mathrm{mm}$. (b) Microscopic structure with spins (blue arrows) polarized in the $x_3$ direction. The material exhibits a cubic lattice structure and lattice constant $a=10^{-10}\,m$, amounting to $N=3\times 10^{23}$ particles in total. (b) Potential energy due to long-range interactions with exponent $\nu = 3.001$ of a spin defect obtained from inverting the spin orientation as a function of position $\bm x=(x_1,x_2,0)^T$ with $x_3 =0$ corresponding to the symmetry plane. Position is written in units of the lattice constant and potential energy in units of the interaction energy of neighboring parallel spins. Additional contours close to the energy minimum are highlighted at values $-206$ (yellow), $-207.0$ (orange), $-207.1$ (blue), and $-207.2$ (white). The figure excludes the immediate boundary layer, where the potential energy increases sharply.}
    \label{fig:spin-circuit}
\end{figure}

We now apply our method to the computation of long-range interaction energies in a macroscopic three-dimensional crystal geometry with non-trivial boundaries. Our geometry of choice is a macroscopic spin circuit, see Fig.~\ref{fig:spin-circuit} (a), defined by the domain
\[\Omega=\{\bm x \in \mathds R^3: L/4 \le \vert x_1\vert + \vert x_2\vert \le  L/2,~ \vert x_3\vert\le L/40 \},\] with $L$ the outer edge length. Microscopically, the crystal is composed of atoms arranged in a cubic lattice structure, $L=\Omega\cap\Lambda$, with $\Lambda = a \mathds Z^3$, see Fig.~\ref{fig:spin-circuit} (b). Each atom shall carry a magnetic moment or spin $\bm S(\bm x)$, described by a three-dimensional vector of unit norm (blue arrows). Choosing a lattice constant $a=10^{-10}\,\mathrm m$ and $L=2\times 10^8\,a$, our geometry has a macroscopic size with an outer edge length of $2\,\mathrm{cm}$, an inner edge length of $1\,\mathrm{cm}$, and a height of $1\,\mathrm{mm}$. In total, the structure includes more than $3\times 10^{23}$ particles. 

\begin{figure}
    \centering
    \includegraphics[width=.6\textwidth]{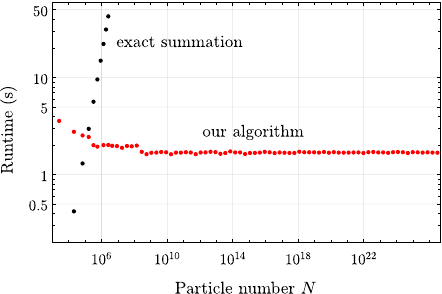}
    \caption{Runtime for evaluating the potential energy for introducing a defect in the spin circuit as a function of particle number $N$. Particle number is modified by rescaling the geometry while keeping the lattice constant fixed. While the numerical effort for exact summation increases linearly (black), our approach (red) yields an effectively constant runtime even up to macroscopic particle numbers.}
    \label{fig:runtime}
\end{figure}

The interaction energy of the spin at position $\bm x$ with all surrounding spins due to an antiferromagnetic power-law long-range interaction with exponent $\nu$ is given by
\[
U(\bm x)=\,\sideset{}{'}\sum_{\bm z\in L}\frac{S(\bm x)\cdot S(\bm z)}{\vert \bm x-\bm z\vert^\nu},
\]
with position measured in units of $a$ and energy measured in terms of interaction energy of two neighboring spins. We now assume that all spins are aligned in the $x_3$ direction and subsequently introduce a prototypical defect at position $\bm x\in L$ by inverting the corresponding spin orientation $S(\bm x)\to -\bm e_3$. The interaction energy due to long-range interactions of this defect as a function of position can then be computed from the generalized zeta function for the spin circuit. The energy as a function of position on the symmetry plane $x_3 = 0$ is displayed in \cref{fig:spin-circuit}~(c) for the particularly challenging case $\nu = 3$ (including a small positive offset of $10^{-3}$). Interactions of this kind include the isotropic part of the dipole interaction and light-induced long-range interactions in optical cavities \cite{gao2021higgs}. Additional contours close to the energy minimum are highlighted.

We find that the minimum energy is reached in four points in the material, inside the white contours. The energy increases when approaching the boundary, with sharp features appearing at the edges and corners. The immediate boundary layer, where the energy increases sharply, is excluded from the plot for better visualization. Surprisingly, for an exponent $\nu$ close to the system dimension, the potential is not flat inside the crystal, but the presence of the boundary has an effect even on macroscopic scales. This is visualized by the different contours at energies $-206$ (yellow), $-207.0$ (orange), $-207.1$ (blue), and $-207.2$ (white). This means that it is impossible to describe the material by its bulk properties only, and boundary effects always need to be included, not only for mesoscopic systems but also at macroscopic scales. Another important conclusion from our results is that long-range interactions tend to repel defects away from the boundary and localize them in the center of the material. This is highly relevant for magnetic defects as information carriers, as they can be destroyed at the boundary, which is a significant problem in technological applications \cite{martinez2018theory}. Our numerical experiment thus shows that long-range interactions can give rise to repelling boundaries that protect defects from boundary annihilation. They can even localize defects in certain regions of the geometry, in this case within the white contours.

For a macroscopic structure of this size, reference computations using exact summation are impossible, as the evaluation of a single energy evaluation requires to compute a sum with $3\times 10^{23}$ summands. In order to still be able to quantify the precision of our approach, we benchmark our results on a rescaled geometry with $L = 160\,a$, which amounts to $N\approx 1.5\times 10^5$ particles. We compare our approach with exact summation on $L$ using an equidistant grid with lattice constant $4\,a$ on the surface $x_3=0$. The maximum relative error of the energy then equals $1.2\times 10^{-12}$. Our algorithm achieves full precision, up to the condition number of the problem, for the full parameter range, as verified in \cref{sec:experiments}.

The main advantage of our method lies in a numerical effort that solely depends on the complexity of the geometry and not on the particle number. We benchmark this claim by evaluating the time required to evaluate $U(\bm x)$ at the position $\bm x = (-L/4,-L/4,0)^T$ while rescaling the geometry through rescaling the parameter $L$, making sure that $\bm x$ remains a lattice point. In Fig.~\ref{fig:runtime}, we display the single-core runtime for evaluating the energy as a function of the rescaled particle number on an Apple M1 Max processor. We observe that while the runtime for exact summation increases linearly (black points), our algorithm (red) offers a runtime that is essentially independent of the number of particles. For macroscopic particle numbers, evaluating the lattice sum amounts to a runtime of less than 2 seconds. The runtime even decreases slightly as the particle number increases, as the crystal boundaries move away from each other, allowing for a slightly faster evaluation of the arising zeta functions. 

\section{Outlook and Conclusions}
\label{sec:conclusions}

Algorithms for calculating zeta functions on lattices with boundaries have been highly sought after. However, until now, only asymptotic expansions for the 2D corner geometry $\Nz^2$ have been developed \cite{elizalde2012spectral_zeta}. The present work solves this long-standing challenge.
For any dimension, any interaction exponent, and any lattice, we present an efficiently computable representation of zeta functions for lattice subsets, focusing specifically on corners and parallelepipeds. We demonstrate that our approach enables the precise and rapid computation of energies in long-range interacting lattice systems with as many as $3\times 10^{23}$ particles and non-trivial boundaries, yielding results of significance for spintronics applications in less than two seconds on a standard laptop. While the computational cost of calculating the exact sum increases linearly with the number of particles, the computational effort for our method is constant. Its complexity depends only on the structures formed by the particles and not on their number, thereby facilitating precise and efficient simulations of macroscopic particle systems.

In future work, we plan to apply this method to analyze long-range interacting systems in condensed matter and quantum physics. We are particularly interested in boundary effects in superconductors, for example, topological excitations such as Majorana fermions, which are promising for use as error-resistant qubits. Further development of our method will explore generalizations of lattices where periodicity is no longer a given, with quasi-crystals as an initial focus to probe new physical effects in generalized point structures.

%%%%%%%%%%%%%%%%%%%%%%%%%%%%%%%%%%%%%%%%%%%%%%%%%%%%%%%%%%%%%%%%%%%%%%%%%%%%%
% Part 2: Derivation
%%%%%%%%%%%%%%%%%%%%%%%%%%%%%%%%%%%%%%%%%%%%%%%%%%%%%%%%%%%%%%%%%%%%%%%%%%%%%

\section{Derivation} \label{sec:derivation}

In this section, we provide the proofs for the results in the main section. After providing standard results in \cref{subsec:preliminaries}, we derive the generalized Crandall formula for set zeta functions in \cref{subsec:crandall_derivation}. The Hadamard integral representation for the convolution part is derived in \cref{subsec:hadamard_representation} and the efficiently computable integral representation for the corner geometry is discussed in \cref{subsec:integral_representation}.

\subsection{Preliminaries}
\label{subsec:preliminaries}
We first collect results from the theory of distributions that are needed for the
treatment of general lattice sums.
We begin with the definition of the spaces involved~\cite{horvarth2012topological}.

\begin{definition}[Test functions and distributions]
For an open set $\Omega \subseteq \mathds R^d$, we denote the space of smooth test function with compact support on $\Omega$
 by $\mathcal D(\Omega)$.
Its dual space, called the space of distributions, is written as $\mathcal D'(\Omega)$.
\end{definition}

\begin{definition}[Functions of superalgebraic decay]
For $k \in \mathds N \cup \{ \infty \}$, $\mathcal S^k(\mathds R^d)$ denotes
the space of $k$ times continuously differentiable functions of
superalgebraic decay in all derivatives up to order $k$,
\[
\sup_{\bm z \in \mathds R^d} |\bm z^{\bm \alpha} \partial^{\bm \beta} \phi(\bm z)| < \infty,
\quad \phi \in \mathcal S^k(\mathds R^d),~\bm \alpha, \bm \beta \in \mathds N^d,~|\bm \beta| \leq k.
\]
\end{definition}

\begin{definition}[Schwartz functions]
The space of smooth functions of superalgebraic decay, $\mathcal S^\infty(\mathds R^d)$, is called the Schwartz space of test functions.
In this context, we drop the exponent and simply write $\mathcal S(\mathds R^d)$.
\end{definition}

\begin{definition}[Translations]
\label{def:translat}
For $\bm y \in \mathds R^d$, let $\tau_{\bm y}$ denote the
translation by $\bm y$,
$\tau_{\bm y} \phi(\bm x) = \phi(\bm x - \bm y)$, $\bm x \in \mathds R^d$,
for a test function $\phi \in \mathcal D(\mathds R^d)$.
It is extended to distributions $u \in \mathcal D'(\mathds R^d)$ via
\[
\big(\tau_{\bm y} u\big)(\phi) = u \big( \tau_{ - \bm y} \phi \big).
\]
\end{definition}

\begin{definition}[Convolution]
For integrable functions $f, g : \mathds R^d \to \mathds C$ the convolution $f \ast g$
is given by
\[
(f \ast g)(\bm x) = \int_{\mathds R^d} f(\bm x - \bm z) g(\bm z) \, \text d \bm z
= \int_{\mathds R^d} \tau_{\bm x} \check f(\bm z) g(\bm z) \, \text d \bm z
\quad \bm x \in \mathds R^d,
\]
where $\check f$ denotes $\check f(\bm z) = f(- \bm z)$.
For $u \in \mathcal S'(\mathds R^d)$ and $\psi \in \mathcal S(\mathds R^d)$ the convolution
$u \ast \psi$ is defined as
\[
u \ast \psi(\bm x) = u\big(\tau_{\bm x} \check \psi \big), \quad \bm x \in \mathds R^d.
\]
\end{definition}

\begin{definition}[Singular support]
Let $u \in \mathcal D'(\Omega)$ for $\Omega \subseteq \mathds R^d$ open.
A point $\bm x \in \Omega$ is in the complement of the singular support of $u$, denoted by $\singsupp u$,
if there is an open neighbourhood $\Omega'$ of $\bm x$ in $\Omega$ such that the restriction of $u$ to
$\Omega'$ can be identified with a smooth function.
\end{definition}

\begin{lemma}\label{lem:fourier-schwartz}
The Fourier transformation is an automorphism on $\mathcal S(\mathds R^d)$.
By taking adjoints, $\mathcal F$ and $\mathcal F^{-1}$ extend to automorphism on $\mathcal S'(\mathds R^d)$.
\end{lemma}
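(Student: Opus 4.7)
The plan is to carry out the standard four-step proof: first show $\mathcal F$ maps $\mathcal S(\mathds R^d)$ into itself continuously, then establish Fourier inversion, combine to get a topological automorphism, and finally extend to $\mathcal S'(\mathds R^d)$ by taking adjoints.

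First I would establish the two interchange identities for $\phi \in \mathcal S(\mathds R^d)$, namely
\[
\widehat{\partial^{\bm \alpha} \phi}(\bm \xi) = (2\pi i \bm \xi)^{\bm \alpha} \hat \phi(\bm \xi), \qquad
\partial^{\bm \alpha} \hat \phi(\bm \xi) = \widehat{(-2\pi i \bm x)^{\bm \alpha} \phi}(\bm \xi),
\]
which follow from integration by parts and differentiation under the integral sign, both justified by the rapid decay of $\phi$ and its derivatives. Combining them shows that $\bm \xi^{\bm \beta} \partial^{\bm \alpha} \hat \phi$ is, up to a constant, the Fourier transform of a Schwartz function; since any Schwartz function lies in $L^1(\mathds R^d)$, its Fourier transform is bounded by the $L^1$ norm. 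This immediately yields the seminorm bounds
\[
\sup_{\bm \xi} |\bm \xi^{\bm \beta} \partial^{\bm \alpha} \hat \phi(\bm \xi)|
\leq C_{\bm \alpha, \bm \beta} \, \bigl\| \partial^{\bm \beta} \bigl( \bm x^{\bm \alpha} \phi \bigr) \bigr\|_{L^1},
\]
and the right side is controlled by finitely many Schwartz seminorms of $\phi$. Hence $\mathcal F$ is a continuous linear map $\mathcal S(\mathds R^d) \to \mathcal S(\mathds R^d)$.

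Next I would establish Fourier inversion on $\mathcal S(\mathds R^d)$, i.e. $\mathcal F^{-1} \mathcal F \phi = \phi$ with $\mathcal F^{-1} \psi(\bm x) = \int \psi(\bm \xi) e^{2\pi i \bm x \cdot \bm \xi}\, \mathrm d \bm \xi$. The cleanest route is the Gaussian regularization: compute directly that the centered Gaussian $g_t(\bm x) = e^{-\pi t |\bm x|^2}$ is a fixed point (up to scaling) of $\mathcal F$, then use the multiplication formula $\int \hat \phi \, g_t = \int \phi \, \widehat{g_t}$ and pass to the limit $t \to 0^+$ using dominated convergence, yielding $\phi(\bm 0) = \int \hat \phi(\bm \xi)\, \mathrm d \bm \xi$. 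Composing with translations then gives the pointwise inversion formula. Together with the analogous bounds for $\mathcal F^{-1}$ obtained by the same argument applied to $\check \phi(\bm x) = \phi(-\bm x)$, this shows $\mathcal F$ is a continuous bijection of $\mathcal S(\mathds R^d)$ onto itself with continuous inverse — a topological automorphism.

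Finally I would pass to the dual. For $u \in \mathcal S'(\mathds R^d)$ define $\mathcal F u$ by duality, $(\mathcal F u)(\phi) = u(\mathcal F \phi)$, which makes sense because $\mathcal F \phi \in \mathcal S(\mathds R^d)$ by the first step. Continuity of $\mathcal F$ on $\mathcal S(\mathds R^d)$ in its canonical topology means $\phi_n \to \phi$ in $\mathcal S$ implies $\mathcal F \phi_n \to \mathcal F \phi$ in $\mathcal S$, so $\mathcal F u \in \mathcal S'(\mathds R^d)$, and the map $u \mapsto \mathcal F u$ is continuous for the weak-$\ast$ topology. Defining $\mathcal F^{-1}$ on $\mathcal S'$ by the same prescription and applying the Schwartz-level inversion inside the bracket shows the two operators are mutual inverses on $\mathcal S'(\mathds R^d)$, proving they are automorphisms. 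The main obstacle is the inversion formula, since every other step is a direct consequence of differentiating under the integral and the duality construction; once inversion is in hand via the Gaussian trick, everything else follows mechanically.
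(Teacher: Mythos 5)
Your argument is correct: it is the classical textbook proof (seminorm bounds via the differentiation/multiplication interchange identities, Fourier inversion via the Gaussian multiplication-formula trick, then extension to $\mathcal S'(\mathds R^d)$ by duality), and there is nothing to compare it against, since the paper states \cref{lem:fourier-schwartz} without proof as a standard preliminary from distribution theory. The only stylistic remark is that when passing to the dual you implicitly use that sequential continuity suffices on the Fréchet space $\mathcal S(\mathds R^d)$, which is fine but worth stating.
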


In the following, we collect statements for a family of spaces that comprises the space of compactly supported test functions,
smooth functions on closed or open domains, and Schwartz functions, see~\cite{treves1967topological} for a general treatment
of Montel spaces and tensor products.

Let $\Omega \subseteq \mathds R^d$ be open or closed.
By $X(\Omega)$, we denote a space of functions defined on $\Omega$, where $X \in \{C^\infty, \mathcal D, \mathcal S \}$.

\begin{lemma}\label{lem:bootstrap-convergence}
For a bounded sequence $(\varphi_n)_n$ in $X(\Omega)$ that converges pointwise to a function $\varphi$,
\[
\lim_{n \to \infty} \varphi_n(\bm z) = \varphi(\bm z), \quad \bm z \in \Omega, 
\]
then also $\varphi \in X(\Omega)$ and the sequence converges in the topology of $X(\Omega)$ to $\varphi$.
\end{lemma}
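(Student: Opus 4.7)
The plan is to reduce the statement to a standard consequence of the Montel property of the spaces involved. The spaces $C^\infty(\Omega)$ and $\mathcal{S}(\mathds R^d)$ are Fréchet--Montel spaces, and the space $\mathcal{D}(\Omega)$, while not metrizable, has the property that any bounded set is contained in and bounded with respect to the Fréchet--Montel subspace $\mathcal{D}_K(\Omega) = \{\phi \in C^\infty(\Omega) : \supp \phi \subseteq K\}$ for some compact $K \subseteq \Omega$. So in every one of the three cases I may assume I am working inside a Fréchet--Montel space $Y$ (for $X = \mathcal D$ I first extract such a compact $K$ from the definition of a bounded set in the LF-topology, and replace $X(\Omega)$ by $\mathcal D_K(\Omega)$).

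Next I would use the Montel property directly: in a Fréchet--Montel space every bounded set is relatively compact, so every subsequence of $(\varphi_n)$ has a further subsequence $(\varphi_{n_{k_j}})_j$ that converges in $Y$ to some $\psi \in Y$. Because evaluation at any fixed point $\bm z \in \Omega$ is a continuous linear functional on $Y$ (all seminorms defining the topology dominate the sup-norm on compact sets), convergence in $Y$ implies pointwise convergence. The hypothesis $\varphi_n(\bm z) \to \varphi(\bm z)$ thus forces $\psi(\bm z) = \varphi(\bm z)$ for every $\bm z$, i.e.\ $\psi = \varphi$. This already gives $\varphi \in Y \subseteq X(\Omega)$. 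Then the standard subsequence principle, valid in the metrizable space $Y$, promotes ``every subsequence has a further subsequence converging to $\varphi$'' to convergence of the full sequence $\varphi_n \to \varphi$ in $Y$, and hence in $X(\Omega)$.

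The main obstacle is really just quoting the Montel property for $C^\infty(\Omega)$ and $\mathcal{S}(\mathds R^d)$: this is the content of an Arzel\`a--Ascoli plus diagonal extraction argument, using that a bounded set in $C^\infty(\Omega)$ has uniformly bounded derivatives of every order on each compact set (so one extracts a subsequence converging uniformly on compacta for each derivative in turn and diagonalizes), and similarly in $\mathcal S(\mathds R^d)$ by exploiting the uniform control of $\bm z^{\bm \alpha}\partial^{\bm \beta}\varphi_n$. Since these are classical facts recorded in the references cited for Montel spaces, I would invoke them rather than reprove them, keeping the proof to the short extraction-and-identify-the-limit argument above.
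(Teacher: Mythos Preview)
Your proposal is correct and takes essentially the same approach as the paper: invoke the Montel property, extract from each subsequence a further convergent subsequence, identify its limit with $\varphi$ via pointwise evaluation, and conclude by the subsequence principle. Your extra step of passing from $\mathcal D(\Omega)$ to a metrizable $\mathcal D_K(\Omega)$ is a bit more careful than the paper (which simply asserts that each $X(\Omega)$ is Montel and extracts subsequences directly), but the underlying argument is identical.
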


\begin{proof}
For all $X$, $X(\Omega)$ is a Montel space, that is bounded sets are precompact.
In particular, every bounded sequence has a converging subsequence.
To prove that $(\varphi_n)_n$ converges, let $(\psi_k)_k$ denote an arbitrary subsequence.
By the Montel property, it has a converging subsequence with limit $\psi$.
Since convergence in $X(\Omega)$ implies pointwise convergence, the limit $\psi$ has to agree
with $\varphi$.
Hence, every subsequence of $(\varphi_n)_n$ has a converging subsequence with limit $\psi$.
This implies that $(\varphi_n)_n$ itsself is converging in $X(\Omega)$ to $\varphi \in X(\Omega)$.
\end{proof}

\begin{lemma}\label{lem:tensor-product}
For $\Omega_1 \subseteq \mathds R^{d_1}$, $\Omega_2 \subseteq \mathds R^{d_2}$, both open or closed,
the completion of the algebraic tensor product
\[
X(\Omega_1) \otimes X(\Omega_2)
\]
in the projective or uniform topology is given by
\[
X(\Omega_1 \times \Omega_2).
\]
\end{lemma}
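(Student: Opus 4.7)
The plan is to invoke the standard structure theory of topological tensor products and reduce the statement to well-known results in Trèves. First, I would verify that for each of the three choices $X \in \{C^\infty, \mathcal D, \mathcal S\}$, the space $X(\Omega)$ is nuclear: $C^\infty(\Omega)$ and $\mathcal S(\mathds R^d)$ are nuclear Fréchet spaces (with their usual seminorms of uniform convergence of all derivatives on compacts, respectively seminorms controlling polynomial decay of all derivatives), while $\mathcal D(\Omega) = \varinjlim_K \mathcal D_K(\Omega)$ is a strict inductive limit of nuclear Fréchet spaces $\mathcal D_K(\Omega)$. By Grothendieck's theorem on nuclear spaces, the projective and injective (uniform, or $\varepsilon$-) tensor product topologies agree on $X(\Omega_1) \otimes X(\Omega_2)$, which justifies the phrase "projective or uniform" in the statement and reduces the task to identifying a single completion.

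Next, I would establish density of the algebraic tensor product $X(\Omega_1) \otimes X(\Omega_2)$ in $X(\Omega_1 \times \Omega_2)$ equipped with its natural topology, and show that the topology induced by the tensor product coincides with the natural one. For $X = C^\infty$ this is carried out by localizing via a partition of unity to coordinate boxes and approximating on each box by truncated Taylor series (or, equivalently, by products of polynomial and mollifier factors) in the two groups of variables separately, controlling all seminorms of uniform convergence on compacts. For $X = \mathcal S$, one follows the same strategy but additionally multiplies by Gaussian cutoffs to ensure superalgebraic decay, so that approximation holds in every Schwartz seminorm $\sup_{\bm z}|\bm z^{\bm \alpha} \partial^{\bm \beta} \varphi|$. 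These density arguments combined with completeness of $X(\Omega_1 \times \Omega_2)$ give the Fréchet-space identifications $C^\infty(\Omega_1) \ctens C^\infty(\Omega_2) \cong C^\infty(\Omega_1 \times \Omega_2)$ and $\mathcal S(\mathds R^{d_1}) \ctens \mathcal S(\mathds R^{d_2}) \cong \mathcal S(\mathds R^{d_1+d_2})$.

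For $X = \mathcal D$, I would exploit that tensor products commute with strict inductive limits: since every compact subset of $\Omega_1 \times \Omega_2$ is contained in some product $K_1 \times K_2$ with $K_i \Subset \Omega_i$, the family of product compacts is cofinal, hence
\[
\mathcal D(\Omega_1) \ctens \mathcal D(\Omega_2) \cong \varinjlim_{K_1, K_2} \mathcal D_{K_1}(\Omega_1) \ctens \mathcal D_{K_2}(\Omega_2) \cong \varinjlim_{K_1, K_2} \mathcal D_{K_1 \times K_2}(\Omega_1 \times \Omega_2) \cong \mathcal D(\Omega_1 \times \Omega_2),
\]
where the middle isomorphism reduces to the Fréchet case applied inside $C^\infty$ together with the support identity. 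The main obstacle is precisely this LF-case: one must verify cofinality of the product compacts and check that the inductive limit topology on the left genuinely agrees with the LF topology on $\mathcal D(\Omega_1 \times \Omega_2)$, which is a standard but delicate consistency check. The handling of closed domains introduces no essential difficulty, since $C^\infty(\overline\Omega)$ is again nuclear Fréchet with seminorms given by uniform convergence of derivatives up to any order on $\overline\Omega$, and the Taylor/mollifier approximation argument adapts directly.
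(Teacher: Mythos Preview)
Your proposal is correct and follows the standard route through nuclearity and Grothendieck's theorem, exactly as one finds in Tr\`eves. The paper, however, does not prove this lemma at all: it is stated in the preliminaries as a known fact from the literature, with the remark ``see~\cite{treves1967topological} for a general treatment of Montel spaces and tensor products'' preceding it. So there is no proof in the paper to compare against; your sketch is simply a fleshed-out version of what the citation points to. One minor remark: for the paper's purposes only the cases $X = C^\infty$ on closed intervals and $X = \mathcal S$ on $\mathds R^d$ are actually invoked (in the tensor-product argument for $\hat T$), so the $\mathcal D$ case, while correct, is not strictly needed here.
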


\subsection{Derivation of the generalized Crandall formula}
\label{subsec:crandall_derivation}
In this section, we prove the generalized Crandall formula for zeta functions on uniformly discrete sets in \cref{thm:generalized-crandall}.

\begin{proof}[Proof of \cref{thm:generalized-crandall}]
We first restrict $\nu$ to the half plane $\Re \nu > d + k + 1$,
where $k$ is the order of the form factor $\hat{\mathds 1}_L$.
We rewrite the interaction in terms of a Mellin transform,
\[
s_\nu(\bm z) = \vert \bm z \vert^{-\nu} =  \frac{\pi^{\nu/2}}{\Gamma(\nu/2)}
\int \limits_{0}^\infty 2\, t^{\nu}e^{-\pi \bm z^2 t^2}\,\frac{\mathrm d t}{t},
\quad \bm z \in \mathds R^d \setminus \{ \bm 0 \}.
\]
Now, we split the integration interval at $t=1 / \lambda$
and use the integral representation of $G_\nu$ in order to obtain
\[
\int \limits_{1/\lambda}^\infty 2 t^{\nu}
e^{-\pi \bm z^2 t^2} \frac{\mathrm d t}{t}
= \lambda^{-\nu} G_\nu \big(\bm z/\lambda\big),
\]
which decays superexponentially as $|\bm z| \to \infty$,
and a second term,
\[
h(\bm z) = \int_0^{1/ \lambda} 2 t^{\nu - 1} e^{- \pi t^2 |\bm z|^2} \, \text dt,
\]
a smooth and bounded function with at most polynomial growth in all derivatives.
To be able to insert the Riemann splitting, we need to regularize the lattice sum
with an exponentially decaying factor,
\begin{align*}
Z_{L, \nu}(\bm y)
&= \lim_{\varepsilon \to 0} \sideset{}{'}\sum_{\bm z\in L}
e^{-\pi \varepsilon^2 |\bm z|^2} \frac{e^{-2 \pi i \bm y \cdot \bm z}}{|\bm z|^\nu} \\
&= \frac{\pi^{\nu / 2}}{\Gamma(\nu / 2)} \lim_{\varepsilon \to 0} \sideset{}{'}\sum_{\bm z\in L}
e^{-\pi \varepsilon^2 |\bm z|^2} e^{-2 \pi i \bm y \cdot \bm z} \big( \lambda^{-\nu} G_\nu(\bm z / \lambda)
+ h(\bm z) \big) \\
&= \frac{\pi^{\nu / 2}}{\Gamma(\nu / 2)} \lambda^{-\nu} \sideset{}{'}\sum_{\bm z\in L}
e^{-2 \pi i \bm y \cdot \bm z} G_\nu(\bm z / \lambda)
+ \frac{\pi^{\nu / 2}}{\Gamma(\nu / 2)} \lim_{\varepsilon \to 0} F_\varepsilon(\bm y),
\end{align*}
with
\[
F_\varepsilon(\bm y) = \sideset{}{'}\sum_{\bm z\in L} e^{- \pi \varepsilon^2 |\bm z|^2} e^{- 2 \pi i \bm y \cdot \bm z} h(\bm z).
\]
To extend the summation over the full set $L$ in above definition, we observe
\[
h(\bm 0) = \int_0^{1 / \lambda} 2 t^{\nu - 1} \, \text dt = \frac{2}{\nu} \lambda^{-\nu},
\]
so we can write
\[
F_\varepsilon(\bm y) = -\frac{2}{\nu} \lambda^{- \nu} [\bm 0 \in L]
+ \sum_{\bm z \in L} e^{- \pi \varepsilon^2 |\bm z|^2} e^{- 2 \pi i \bm y \cdot \bm z} h(\bm z),
\]
where the Iverson bracket $[\bm 0 \in L]$ equals $1$ if $L$ includes the origin and $0$ otherwise.
Thus we can write $F_\varepsilon$ as
\[
F_\varepsilon(\bm y) = -\frac{2}{\nu} \lambda^{- \nu} [\bm 0 \in L]
+ \langle e^{- \pi \varepsilon^2 |\bm \cdot |^2} e^{- 2 \pi i \bm y \bm \cdot } h, \mathds 1_L \rangle
\]
With \cref{lem:exp-int-fourier} the distributional Fourier transform of $h$ is an element of $\mathcal S^k(\mathds R^d)$,
\[
\hat h(\bm k) = \lambda^{d - \nu} G_{d - \nu}(\lambda \bm k), \quad \bm k \in \mathds R^d.
\]
Therefore,
\[
F_\varepsilon(\bm y) = -\frac{2}{\nu} \lambda^{- \nu} [\bm 0 \in L]
+ \lambda^{d - \nu} \langle \big(\varepsilon^{-d} e^{- \pi |\bm \cdot |^2 / \varepsilon^2} \ast G_{d - \nu}(\lambda \bm \cdot)\big)( \bm \cdot + \bm y),
                    \hat{\mathds 1}_L \rangle
\]
Above dual bracket in $\mathcal S(\mathds R^d)$ extends to $\mathcal S^k(\mathds R^d)$ by the assumption on $\nu$.
Furthermore, the convolution of the rescaled Gaussian and $G_{d - \nu}(\lambda \bm \cdot)$ converges in $\mathcal S^k(\mathds R^d)$
to $G_{d - \nu}(\lambda \bm \cdot)$ as $\varepsilon \to 0$ since it is the convolution of a Dirac sequence in
$\mathcal S(\mathds R^d) \subseteq \mathcal S^k(\mathds R^d)$ with an element in $\mathcal S^k(\mathds R^d)$.
Hence,
\begin{align*}
\lim_{\varepsilon \to 0} F_\varepsilon(\bm y) &=
-\frac{2}{\nu} \lambda^{- \nu} [\bm 0 \in L]
+ \lambda^{d - \nu} \langle G_{d - \nu}\big(\lambda ( \bm \cdot + \bm y)\big), \hat{\mathds 1}_L \rangle \\
&=
-\frac{2}{\nu} \lambda^{- \nu} [\bm 0 \in L]
+ \lambda^{d - \nu} \big\{\hat{\mathds 1}_L \ast G_{d - \nu}(\lambda \bm \cdot)\big\}(\bm y).
\end{align*}
To summarize,
\[
\begin{multlined}[t]
Z_{L, \nu}(\bm y) = \frac{\pi^{\nu / 2}}{\lambda^\nu \Gamma(\nu / 2)} \bigg( \sideset{}{'}\sum_{\bm z\in L}
e^{-2 \pi i \bm y \cdot \bm z} G_\nu(\bm z / \lambda)
-\frac{2}{\nu} [\bm 0 \in L] \\
+ \lambda^d \big\{\hat{\mathds 1}_L \ast G_{d - \nu}\big(\lambda \bm \cdot)\big\}(\bm y)
\bigg)
\end{multlined}
\]
With the definition of $G_\nu$ at $\bm 0$, we can include the Iverson bracket in the lattice sum and obtain
\[
Z_{L, \nu}(\bm y) = \frac{\pi^{\nu / 2}}{\lambda^\nu \Gamma(\nu / 2)} \bigg(\sum_{\bm z\in L}
e^{-2 \pi i \bm y \cdot \bm z} G_\nu(\bm z / \lambda)
+ \lambda^d \big\{\hat{\mathds 1}_L \ast G_{d - \nu}(\lambda \bm \cdot)\big\}(\bm y)
\bigg)
\]
This proves the asserted equality for $\nu \in \mathds C$ with $\Re \nu > d + k + 1$.
To extend equality to the complex plane and show smoothness in $\bm y$, we split the formula
into $S_1$, the sum over $L$, and $S_2$, the convolution in Fourier space.
Owing to the superexponential decay of $G_\nu$, $S_1$ is a smooth function of $\bm y$
for every $\nu \in \mathds C$.
Note that the potential singularity at $\nu = 0$ for $\bm 0 \in L$ is cancelled by the inverse
gamma factor in front of the sum.
The same argument shows that $S_1$ is an analytic function of $\nu$ for every fixed $\bm y \in \mathds R^d$.
With \cref{thm:integral_representation}, $S_2$ can be written as
\[
S_2 = \frac{\pi^{\nu / 2}}{\lambda^{\nu - d}} \frac{2}{\Gamma(\nu / 2)}
\ddashint_0^1 t^{\nu - d - 1} \psi_L(t / \lambda, \bm y) \, \text d t,
\]
which provides an analytic continuation of the convolution for every $\bm y \notin L^*$
and, moreover, is smooth outside $L^*$ for any $\nu \in \mathds C$.
\end{proof}

\subsection{Derivation of the Hadamard integral representation}
\label{subsec:hadamard_representation}
Having established the generalized Crandall formula in the last section,
we now prove the Hadamard integral representation for the convolution in \cref{thm:integral_representation}.

\begin{lemma}\label{lem:hadmard_representation_regular}
Let $L$ be uniformly discrete whose form factor has order $k$ and $\Re{\nu}>d+k$. Then for $\bm y\in \mathds R^d \setminus L^*$,
\[
\big(\hat {\mathds 1}_L\ast G_{d-\nu}(\lambda \bm \cdot) \big)(\bm y)=2\, \int_{0}^{1} t^{\nu-d-1} \psi_L (t/\lambda,\bm y)\,\mathrm d t,
\]
with $\psi_L(\cdot,\bm y):(0,1]\to \mathds C$ smooth and defined as
\[
\psi_L(t,\bm y)=\big(\hat {\mathds 1}_{L}\ast e^{-\pi \bm \cdot^2/t^2}\big)(\bm y), \quad t > 0.
\]
\end{lemma}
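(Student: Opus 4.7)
The plan is to substitute the Mellin-type representation of $G_{d-\nu}$ that is already recorded in \cref{thm:crandall} into the convolution, and then to exchange the distributional pairing $\hat{\mathds 1}_L(\cdot)$ with the one-dimensional $t$-integration. Writing the representation in the form
\[
G_{d-\nu}(\lambda \bm z) = 2\int_{0}^{1} t^{\nu-d-1}\,e^{-\pi \lambda^{2}\bm z^{2}/t^{2}}\,\mathrm d t,
\]
and using that $G_{d-\nu}$ is even so that $\widecheck{G_{d-\nu}(\lambda \bm \cdot)} = G_{d-\nu}(\lambda \bm \cdot)$, the convolution takes the form of the distributional pairing
\[
\big(\hat{\mathds 1}_L \ast G_{d-\nu}(\lambda \bm \cdot)\big)(\bm y) = \hat{\mathds 1}_L\big( G_{d-\nu}(\lambda(\bm y - \bm \cdot)) \big),
\]
which makes sense because the Gaussian decay of the function $\bm z \mapsto G_{d-\nu}(\lambda(\bm y - \bm z))$ places it in $\mathcal S^{k}(\mathds R^d)$, the natural test space for the order-$k$ distribution $\hat{\mathds 1}_L$.

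The technical heart of the argument is the Fubini-type interchange. I would realize the $t$-integration as a Bochner integral of an $\mathcal S^{k}(\mathds R^d)$-valued curve $t \mapsto e^{-\pi \lambda^{2}(\bm y - \bm \cdot)^{2}/t^{2}}$ on $(0,1]$ and verify integrability seminorm by seminorm. A direct chain-rule computation shows that each spatial partial derivative of order at most $k$ brings down a factor scaling like a polynomial in $1/t$ of degree at most $k$ near the origin, so that all the defining $\mathcal S^{k}$-seminorms of $t^{\nu-d-1}e^{-\pi \lambda^{2}(\bm y - \bm \cdot)^{2}/t^{2}}$ are dominated by a constant times $t^{\Re\nu - d - 1 - k}$. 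This is integrable on $(0,1]$ precisely under the hypothesis $\Re\nu > d + k$, making the Bochner integral well-defined in the Fréchet space $\mathcal S^{k}(\mathds R^d)$. Continuity of $\hat{\mathds 1}_L$ as a functional on that space then permits the exchange and produces
\[
\big(\hat{\mathds 1}_L \ast G_{d-\nu}(\lambda \bm \cdot)\big)(\bm y) = 2\int_{0}^{1} t^{\nu-d-1}\,\hat{\mathds 1}_L\big(e^{-\pi \lambda^{2}(\bm y - \bm \cdot)^{2}/t^{2}}\big)\,\mathrm d t = 2\int_{0}^{1} t^{\nu-d-1}\,\psi_L(t/\lambda, \bm y)\,\mathrm d t,
\]
where the inner pairing has been identified with $\psi_L(t/\lambda,\bm y)$ through the defining formula $\psi_L(s,\bm y) = \hat{\mathds 1}_L\big(e^{-\pi(\bm y - \bm \cdot)^2/s^2}\big)$.

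Smoothness of $\psi_L(\cdot, \bm y)$ on $(0,1]$ follows from the same circle of ideas by differentiating under the pairing: for $t > 0$ and any $n \in \mathds N$, the derivatives $\partial_{t}^{n}\,e^{-\pi (\bm y - \bm \cdot)^{2}/t^{2}}$ again lie in $\mathcal S^{k}(\mathds R^d)$ and depend continuously on $t$ in its seminorms, which combined with continuity of $\hat{\mathds 1}_L$ allows the derivative to pass through the distributional bracket. I expect the main obstacle to be the sharp $t^{-k}$ estimate on the $\mathcal S^{k}$-seminorms; extracting the exact exponent that matches the hypothesis $\Re\nu > d + k$ requires careful bookkeeping of the chain rule when differentiating the shifted, rescaled Gaussian, together with an explicit argument that Bochner integrability in the Fréchet space $\mathcal S^{k}(\mathds R^d)$ may indeed be tested on each defining seminorm separately.
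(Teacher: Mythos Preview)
Your proposal is correct and achieves the stated hypothesis $\Re\nu>d+k$ cleanly, but the route differs from the paper's. The paper does not invoke Bochner integration; instead it discretises the Mellin representation of $G_{d-\nu}$ by Riemann sums and proves (\cref{lem:exp-int-sk}) that these sums converge to $G_{d-\nu}(\lambda\,\bm\cdot)$ in the topology of $\mathcal S^{k}(\mathds R^{d})$, using a Montel-space compactness argument. Continuity of $\hat{\mathds 1}_L$ then passes through the limit, and the resulting scalar Riemann sums are identified with the one-dimensional integral. Your approach replaces this two-step limiting procedure by a single vector-valued integral: you keep the continuous $t$-parameter, estimate the $\mathcal S^{k}$-seminorms of the shifted Gaussian as $O(t^{-k})$, and use Bochner integrability to justify the interchange directly. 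This is more streamlined and avoids the auxiliary convergence lemma; it also yields the exact threshold $\Re\nu>d+k$, whereas the paper's Riemann-sum lemma as stated (with $\Re\mu>k+1$) formally delivers $\Re\nu>d+k+1$. The paper's method, on the other hand, stays within elementary sequential arguments and reuses its Montel-space machinery, so it avoids any appeal to Bochner integration in Fr\'echet spaces. For smoothness of $\psi_L(\cdot,\bm y)$ on $(0,1]$ the paper cites the general operator lemma (\cref{lem:scaled-conv-smooth-continuous}), which gives more---smoothness up to $t=0$---than is needed here; your direct differentiation-under-the-pairing argument is sufficient for the interval $(0,1]$ actually claimed in the lemma.
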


Its proof requires a few technical lemmas that we prove in \cref{sec:techincal-hadamard-representation}.

\begin{lemma}\label{lem:exp-int-sk}
The sequence $(\gamma_n)_{n \in \Nz} \subseteq \mathcal S^k(\mathds R^d)$ with
\[
\gamma_n(\bm z) = \frac{1}{n} \sum_{j = 1}^n 2 (j / n)^{\mu - 1} \exp(- \pi \lambda^2 n^2 / j^2 |\bm z|^2), \quad \bm z \in \mathds R^d,
\]
converges to $G_{-\mu}$ for every $\mu \in \mathds C$ with $\Re \mu > k + 1$, $k \in \Nz$.
\end{lemma}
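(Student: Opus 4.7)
The plan is to identify $\gamma_n$ as a right Riemann sum for the integral representation of $G_{-\mu}$ given in \cref{thm:crandall}, establish pointwise convergence, verify uniform boundedness in $\mathcal S^k(\mathds R^d)$, and then invoke \cref{lem:bootstrap-convergence} to promote the convergence to the topology of $\mathcal S^k(\mathds R^d)$. Each $\gamma_n$ is manifestly Schwartz, being a finite linear combination of Gaussians.

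Setting $t_j := j/n$, one rewrites
\[
\gamma_n(\bm z) = \sum_{j=1}^n 2\, t_j^{\mu-1} e^{-\pi \lambda^2 |\bm z|^2 / t_j^2}\,\tfrac{1}{n},
\]
which is the right Riemann sum on $[0,1]$ for
\[
2\int_0^1 t^{\mu-1} e^{-\pi\lambda^2 |\bm z|^2/t^2}\,\mathrm d t \;=\; G_{-\mu}(\lambda \bm z).
\]
Pointwise convergence $\gamma_n(\bm z) \to G_{-\mu}(\lambda\bm z)$ then holds for every $\bm z \in \mathds R^d$: for $\bm z \neq \bm 0$ the integrand is continuous on the compact interval $[0,1]$ (extended by $0$ at $t=0$), while for $\bm z = \bm 0$ it collapses to $t^{\mu-1}$, whose Riemann sums converge since $\Re{\mu} > k+1 \geq 1$.

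The crucial step is the uniform $\mathcal S^k$-bound. Using the Gaussian scaling identity $\partial^{\bm \beta} e^{-\pi|\bm z|^2/s^2} = s^{-|\bm \beta|} (\partial^{\bm \beta} e^{-\pi|\cdot|^2})(\bm z/s)$ with $s = t_j/\lambda$, and changing the dummy variable to $\bm u = \lambda \bm z/t_j$, one obtains
\[
\bm z^{\bm \alpha}\partial^{\bm \beta}\gamma_n(\bm z) \;=\; 2\lambda^{|\bm \beta|-|\bm \alpha|}\cdot\tfrac{1}{n}\sum_{j=1}^n t_j^{\mu-1+|\bm \alpha|-|\bm \beta|}\,\Phi_{\bm \alpha,\bm \beta}(\bm u),
\]
where $\Phi_{\bm \alpha,\bm \beta}(\bm u) = \bm u^{\bm \alpha} H_{\bm \beta}(\bm u) e^{-\pi|\bm u|^2}$ with $H_{\bm \beta}$ a Hermite-type polynomial of degree $|\bm \beta|$. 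As a Schwartz function in $\bm u$, $\Phi_{\bm \alpha,\bm \beta}$ is bounded by a constant $C_{\bm \alpha,\bm \beta}$, which gives
\[
|\bm z^{\bm \alpha}\partial^{\bm \beta}\gamma_n(\bm z)| \;\leq\; 2\lambda^{|\bm \beta|-|\bm \alpha|} C_{\bm \alpha,\bm \beta}\cdot \tfrac{1}{n}\sum_{j=1}^n (j/n)^{\Re{\mu}-1+|\bm \alpha|-|\bm \beta|}.
\]
The hypothesis $\Re{\mu} > k+1$ ensures that the exponent $\Re{\mu}-1+|\bm \alpha|-|\bm \beta|$ exceeds $-1$ for every $|\bm \beta| \leq k$, so this Riemann sum is bounded uniformly in $n$, and (crucially) in $\bm z$, which has been absorbed into the now-bounded factor $\Phi_{\bm \alpha,\bm \beta}$.

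With pointwise convergence and uniform $\mathcal S^k$-boundedness established, \cref{lem:bootstrap-convergence} upgrades the convergence to the topology of $\mathcal S^k(\mathds R^d)$, and the limit necessarily coincides with $G_{-\mu}(\lambda \bm \cdot)$. The main obstacle is the control of the $t=0$ singularity produced by the $k$-fold differentiation: each derivative of the Gaussian contributes a factor $s^{-1} = \lambda/t_j$, worsening the $t^{\mu-1}$ weight by one power of $t^{-1}$, which is precisely what forces the hypothesis $\Re{\mu} > k+1$ rather than the weaker $\Re{\mu} > 0$ sufficient for mere pointwise convergence.
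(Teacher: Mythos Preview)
Your seminorm estimate is correct, but the closing step has a genuine gap: \cref{lem:bootstrap-convergence} is stated only for $X \in \{C^\infty, \mathcal D, \mathcal S\}$, and its proof rests on the Montel property, which $\mathcal S^k(\mathds R^d)$ does \emph{not} enjoy for finite $k$. Concretely, take $\chi \in \mathcal D(\mathds R)$ with $\chi(0)=1$ and set $\phi_n(z)=\chi(z)\,n^{-k}\sin(nz)$; this sequence is bounded in $\mathcal S^k(\mathds R)$ and converges pointwise to $0$, yet $\partial^k\phi_n$ contains the term $\chi(z)\cos(nz)$, which has no uniformly convergent subsequence. So pointwise convergence together with $\mathcal S^k$-boundedness is genuinely insufficient, and your invocation of \cref{lem:bootstrap-convergence} does not go through as written.

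The paper's proof sidesteps this obstruction by splitting with a cutoff $\chi\in\mathcal D(\mathds R^d)$ equal to $1$ near~$\bm 0$. On the unbounded piece $(1-\chi)\gamma_n$ everything lies in the Montel space $\mathcal S(\mathds R^d)$ and is uniformly bounded there (this is where \cref{lem:smooth-schwartz-mapping} enters), so \cref{lem:bootstrap-convergence} legitimately applies; on the compactly supported piece $\chi\gamma_n$ one argues directly that the integrand $\eta(t,\bm z)=2t^{\mu-|\bm\beta|-1}(\bm z/t)^{\bm\delta}e^{-\pi|\bm z|^2/t^2}$ is uniformly continuous on $[0,1]\times\supp\chi$ for each $|\bm\beta|\le k$ (here $\Re\mu>k+1$ is used), whence the Riemann sums converge uniformly in all derivatives up to order $k$. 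Your more direct route can be repaired without the splitting: since $\Re\mu>k+1$, your Riemann-sum bound in fact controls $q_{\bm\alpha,\bm\beta}(\gamma_n)$ uniformly for $|\bm\beta|\le k+1$, and that extra derivative provides the equicontinuity needed for an Arzel\`a--Ascoli argument yielding convergence in $\mathcal S^k$; but this compactness step must be made explicit rather than deferred to a lemma that does not cover $\mathcal S^k$.
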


\begin{lemma}
\label{lem:scaled-conv-smooth-continuous}
For $u \in \mathcal S'(\mathds R^d)$ with $U = \mathds R^d \setminus \singsupp u$, the linear operator
\[
T: \mathcal S(\mathds R^d) \to C^\infty([0, 1] \times U),~\varphi \mapsto T \varphi,
\]
with
\[
\big( T \varphi \big)(t, \bm y) = \big( u \ast \varphi(\bm \cdot / t) \big)(\bm y), \quad t > 0,~\bm y \in U,
\]
and smoothly extended to $t = 0$,
is well-defined and continuous.
If $\varphi$ is even, then $T \varphi$ has the form
\[
T \varphi(t, \bm y) = t^d \eta(t^2, \bm y), \quad t \in [0, 1],~\bm y \in U,
\]
for $\eta \in C^\infty([0, 1] \times U)$.
\end{lemma}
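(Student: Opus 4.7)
The plan is to prove the lemma by localizing $u$ around each point $\bm y_0 \in U$ and analyzing the convolution on a small neighborhood. Since $U = \mathds R^d \setminus \singsupp u$ is open, I can pick $r > 0$ and a cutoff $\chi \in \mathcal D(\mathds R^d)$ with $\chi \equiv 1$ on $B(\bm y_0, r)$ and $\supp \chi \subseteq B(\bm y_0, 2r) \subseteq U$, on which $u$ agrees with a smooth function. This yields the splitting $u = f + v$, where $f := \chi u \in \mathcal D(\mathds R^d)$ is smooth and compactly supported, and $v := (1 - \chi) u \in \mathcal S'(\mathds R^d)$ has support contained in $\mathds R^d \setminus B(\bm y_0, r)$. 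By linearity of $T$, I would analyze $f \ast \varphi(\bm \cdot / t)$ and $v \ast \varphi(\bm \cdot / t)$ separately and then patch across different base points $\bm y_0$, using that $T\varphi$ is defined globally on $(0, 1] \times U$ by the convolution formula.

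For the smooth compactly supported part, the substitution $\bm w = (\bm y - \bm z)/t$ for $t > 0$ yields
\[
\big( f \ast \varphi(\bm \cdot / t) \big)(\bm y) = t^d \, g(t, \bm y), \quad g(t, \bm y) := \int_{\mathds R^d} f(\bm y - t \bm w) \varphi(\bm w) \, \mathrm d \bm w,
\]
where $g$ is smooth on $\mathds R \times \mathds R^d$ by differentiation under the integral, using the superalgebraic decay of $\varphi$ together with boundedness of all derivatives of $f$. If $\varphi$ is even, the further substitution $\bm w \mapsto -\bm w$ yields $g(-t, \bm y) = g(t, \bm y)$, so $g(t, \bm y) = \tilde g(t^2, \bm y)$ for some smooth $\tilde g$, producing the desired $t^d \eta(t^2, \bm y)$ structure for this part.

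For the singular part, I would exploit that $v$ has finite order $M$ as a tempered distribution and that $v$ only picks up values of test functions on $\supp v \subseteq \mathds R^d \setminus B(\bm y_0, r)$. For $\bm y \in B(\bm y_0, r/2)$ and $\bm z \in \supp v$ one has $|\bm y - \bm z| \geq r/2$, so the Schwartz decay of $\varphi$ combined with the seminorm bound for $v$ gives, for every $N \in \mathds N$,
\[
\big| \partial_t^j \partial_{\bm y}^{\bm \alpha} \big(v \ast \varphi(\bm \cdot / t)\big)(\bm y) \big| \leq C_{j, \bm \alpha, N}\, t^N,
\]
where the constants depend on finitely many Schwartz seminorms of $\varphi$ and are uniform in $\bm y$ near $\bm y_0$. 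Hence this contribution extends smoothly by zero to $t = 0$ and is flat there to all orders. Writing such a flat smooth function as $t^d H(t, \bm y)$ with $H$ smooth and itself flat at $t = 0$, and verifying that $h(s, \bm y) := H(\sqrt{s}, \bm y)$ is smooth in $s \in [0, 1]$ by feeding the flatness estimates on $H$ into the chain rule, one obtains that this contribution equals $t^d h(t^2, \bm y)$ for smooth $h$, preserving the claimed structure when added to the smooth piece.

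The main obstacle will be the careful estimate for the singular part: the test function $\bm z \mapsto \varphi((\bm y - \bm z)/t)$ has derivatives in $\bm z$, $\bm y$, and $t$ that grow like $1/t^{|\bm \alpha| + j}$, so $N$ must be chosen strictly larger than $M + |\bm \alpha| + j$ to absorb these factors through Schwartz decay, and the $\bm y$-uniformity of the constants has to be tracked through the argument. Continuity of $T$ as a map $\mathcal S(\mathds R^d) \to C^\infty([0, 1] \times U)$ follows because all the bounds depend on $\varphi$ only through a finite collection of Schwartz seminorms; and the patching of the local decompositions at different base points $\bm y_0$ is automatic, since $T\varphi$ is already unambiguously defined by the convolution on $t > 0$ and the local extensions to $t = 0$ must agree on overlaps by continuity.
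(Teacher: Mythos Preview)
Your proposal is correct and follows the same overall strategy as the paper: localize near a point of $U$, split into a near part where $u$ is smooth and a far part, and show the far part is flat at $t=0$. The implementations differ, however. You cut off $u$ with a fixed $\chi$ supported near $\bm y_0$ and then estimate the pairing of the remainder $v=(1-\chi)u$ with $\varphi((\bm y-\bm\cdot)/t)$ directly, using the finite order of $v$ together with the Schwartz decay of $\varphi$ on $\{|\bm\xi|\ge r/(2t)\}$. The paper instead uses a cutoff $\chi$ centered at the origin and moving with $\bm y$, which allows the second piece to be rewritten as $u\ast[(1-\chi)\varphi(\bm\cdot/t)]$; the flatness at $t=0$ then follows from a separate lemma showing that $t\mapsto(1-\chi)\varphi(\bm\cdot/t)$ is a smooth $\mathcal S(\mathds R^d)$--valued map vanishing to all orders at $t=0$. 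Your route is more elementary and self-contained, while the paper's route isolates the key analytic fact in a reusable auxiliary lemma; both arrive at the same flatness and both invoke the parametric Whitney-type statement (a smooth even function of $t$ is a smooth function of $t^2$) to obtain the $t^d\eta(t^2,\bm y)$ form. One small sharpening: your phrase ``finite collection of Schwartz seminorms'' should be read per seminorm of $C^\infty([0,1]\times U)$, since the number of seminorms of $\varphi$ needed grows with the derivative order being controlled; this is exactly what continuity between Fr\'echet spaces requires.
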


\begin{proof}[Proof of \cref{lem:hadmard_representation_regular}]
For $\bm y \in \mathds R^d \setminus L^*$, the smoothness of $\psi_L$ is a consequence of \cref{lem:scaled-conv-smooth-continuous}.
To prove the asserted equality, we use \cref{lem:exp-int-sk} to write
\[
G_{d - \nu}(\lambda \bm \cdot) = \lim_{n \to \infty} \frac{1}{n} \sum_{j = 1}^n 2 (j / n)^{d - \nu - 1} \exp(- \pi \lambda^2 n^2 / j^2 |\bm \cdot|^2),
\]
with convergence in $\mathcal S^k(\mathds R^d)$.
In particular,
\[
\hat{\mathds 1}_L \ast \big( G_{d  - \nu}(\lambda \bm \cdot) \big)(\bm y)
= \lim_{n \to \infty} \frac{1}{n} \sum_{j = 1}^n 2 (j / n)^{d - \nu - 1} \psi_L(j / n / \lambda, \bm y).
\]
The right hand side is a Riemann sum and converges to the corresponding integral,
\[
 \lim_{n \to \infty} \frac{1}{n} \sum_{j = 1}^n 2 (j / n)^{d - \nu - 1} \psi_L(j / n / \lambda, \bm y)
 = 2 \int_0^1 t^{\nu - d - 1} \psi_L(t / \lambda, \bm y) \, \text dt,
\]
which proves the asserted equality.
\end{proof}

We are now able to prove the main result of this subsection, the integral representation for the convolution
over the full range of $\nu \in \mathds C$

\begin{proof}[Proof of \cref{thm:integral_representation}]
\cref{lem:hadmard_representation_regular} already contains a proof of the statement for $\Re \nu > d + k + 1$.
To cover the remaining cases, we observe that owing to \cref{lem:scaled-conv-smooth-continuous}
we can write $\psi_L$ as
\[
\psi_L(t, \bm y) = t^d \eta(t^2, \bm y), \quad t \in [0, 1],~\bm y \in \mathds R^d \setminus L^*,
\]
with a smooth function $\eta$.
Hence,
\[
\lambda^d \frac{2}{\Gamma(\nu / 2)} \int_{0}^{1} t^{\nu - d - 1} \psi_L (t/\lambda,\bm y)\,\mathrm d t
=
\frac{2}{\Gamma(\nu / 2)} \int_{0}^{1} \tau^{\nu / 2 - 1} \eta(\tau/\lambda^2,\bm y)\,\mathrm d \tau,
\quad \Re \nu > 0.
\]
Now, the right hand side can be extended as an entire function by the Hadamard integral.
Note that the inverse gamma function removes the simple poles of the meromorphic Hadamard integral.
Furthermore, since $\eta$ is a smooth function in both arguments,
the Hadamard integral integral is a smooth function of $\bm y$ too.
\end{proof}

\subsection{Integral representation for corner geometry}
\label{subsec:integral_representation}
The computation of the form factor for general point sets is in general a challenging task.
For a corner $L= A \Nz^d$ in $d$ dimensions,
we can however compute the form factor and its singular support $L^*$ analytically.

\begin{lemma}
For $\mathds 1_{\Nz}$ the form factor is $\big(1 - \exp(-2 \pi i (\cdot - i 0^+))\big)^{-1}$.
\end{lemma}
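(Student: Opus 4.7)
The plan is to regularize $\mathds{1}_{\Nz}$ by the exponentially damped discrete measure $g_\varepsilon = \sum_{n \ge 0} e^{-\varepsilon n}\delta_n$, compute its Fourier transform in closed form by summing a geometric series, and pass to the limit $\varepsilon \to 0^+$ in $\mathcal S'(\R)$. The whole argument is essentially the observation that $\bigl(1 - \exp(-2\pi i(\cdot - i 0^+))\bigr)^{-1}$ is, by definition, the distributional boundary value from the lower half-plane of the meromorphic function $z \mapsto (1 - e^{-2\pi i z})^{-1}$.

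First I would verify $g_\varepsilon \to \mathds{1}_{\Nz}$ in $\mathcal S'(\R)$ as $\varepsilon \to 0^+$. For any Schwartz test $\phi \in \mathcal S(\R)$, dominated convergence with summable majorant $|\phi(n)|$ (summable because $\phi$ decays rapidly) yields
\[
g_\varepsilon(\phi) = \sum_{n \ge 0} e^{-\varepsilon n}\phi(n) \xrightarrow[\varepsilon\to 0^+]{} \sum_{n \ge 0}\phi(n) = \mathds{1}_{\Nz}(\phi).
\]
Continuity of the Fourier transform on $\mathcal S'(\R)$ (\cref{lem:fourier-schwartz}) then gives $\hat g_\varepsilon \to \hat{\mathds{1}}_{\Nz}$ in $\mathcal S'(\R)$, so the problem reduces to identifying the limit of the $\hat g_\varepsilon$.

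Second I would compute $\hat g_\varepsilon$ directly. Since $g_\varepsilon$ is a finite complex Borel measure, its Fourier transform is the absolutely convergent geometric series
\[
\hat g_\varepsilon(\xi) = \sum_{n \ge 0} e^{-\varepsilon n}\,e^{-2\pi i n \xi} = \frac{1}{1 - e^{-\varepsilon - 2\pi i \xi}} = \frac{1}{1 - \exp\bigl(-2\pi i(\xi - i\varepsilon/(2\pi))\bigr)}, \quad \xi\in\R,
\]
a bounded continuous function that represents a tempered distribution. As $\varepsilon \to 0^+$, this family is precisely the restriction of $z \mapsto (1 - e^{-2\pi i z})^{-1}$ to the real line translated into the lower half-plane, so its $\mathcal S'$-limit is by definition the boundary-value distribution written in the statement as $\bigl(1 - \exp(-2\pi i(\cdot - i 0^+))\bigr)^{-1}$. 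Combined with the first step, this identifies $\hat{\mathds{1}}_{\Nz}$ as asserted, and inspection of the poles of the denominator shows $\singsupp \hat{\mathds{1}}_{\Nz} \subseteq \Z$.

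The main obstacle I anticipate is justifying that the boundary-value limit exists in $\mathcal S'(\R)$. This fits into the standard framework for holomorphic functions of polynomial growth on a strip $\{-\eta_0 < \Im z < 0\}$: the bound $|(1 - e^{-2\pi i z})^{-1}| \lesssim 1/|\Im z|$ for small $|\Im z|$ (via the reverse triangle inequality together with $|e^{-2\pi i z}| = e^{2\pi\Im z}$) and the manifest uniform behaviour in $\Re z$ supply the required hypotheses. As an alternative one can test against $\phi\in\mathcal S(\R)$, split $\phi$ into a component supported away from $\Z$ (uniform convergence) and a component localised around each $n\in\Z$ (controlled by Sokhotski--Plemelj after linearising the denominator at $\xi = n$), which yields the same conclusion without citing a general theorem.
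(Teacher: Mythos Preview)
Your proposal is correct and follows essentially the same approach as the paper: regularize by an exponential damping factor, sum the resulting geometric series in closed form, and pass to the limit $\varepsilon \to 0^+$ in $\mathcal S'(\R)$. The paper's proof is more terse, citing Gelfand--Shilov for the existence of the boundary-value limit rather than sketching the polynomial-growth or Sokhotski--Plemelj argument, but the structure is identical (up to the cosmetic reparametrization $\varepsilon \leftrightarrow 2\pi\varepsilon$ of the damping).
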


\begin{proof}
For $\varepsilon > 0$ let
\[
\hat {\mathds 1}_{\varepsilon}(y) = \sum_{z \in \Nz} e^{-2 \pi i (y - i \varepsilon) z}
= \frac{1}{1 - \exp(- 2 \pi i ( y - i \varepsilon))}
, \quad y \in \mathds R.
\]
Here, the right hand side converges weakly to $\big(1 - \exp(-2 \pi i (\cdot - i 0^+))\big)^{-1}$,
see \cite[Ch.~1, Sec.~3.6.]{gelfand1964generalizedI}.
Furthermore, it is readily seen that the left hand side converges
weakly in $S'(\mathds R)$ to $\hat{\mathds 1}_{\Nz}$.
\end{proof}

From the explicit expression of the form factor we can immediately conclude its singular support.
\begin{lemma}
\label{lem:sing_supp_1_Z}
The singular support of $\hat{\mathds 1}_{\Nz}$ is $\mathds Z$.
\end{lemma}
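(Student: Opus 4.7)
The plan is to establish the equality $\singsupp \hat{\mathds 1}_{\Nz} = \mathds Z$ by proving both inclusions, leveraging the distributional functional equation
\[
(1 - e^{-2\pi i y})\, \hat{\mathds 1}_{\Nz}(y) = 1 \quad \text{in } \mathcal S'(\mathds R).
\]
I would derive this equation by Fourier-transforming the elementary identity $\mathds 1_{\Nz} - \tau_1 \mathds 1_{\Nz} = \delta_0$ (translation by $1$ removes precisely the unit mass at the origin), combined with the covariance $\widehat{\tau_1 u}(y) = e^{-2\pi i y}\, \hat u(y)$. Once this functional equation is in hand, both inclusions reduce to nearly mechanical arguments.

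For the inclusion $\singsupp \hat{\mathds 1}_{\Nz} \subseteq \mathds Z$, I would fix any $y_0 \in \mathds R \setminus \mathds Z$ and work in a small open neighbourhood $U \subseteq \mathds R \setminus \mathds Z$. On $U$, the smooth function $f(y) = 1 - e^{-2\pi i y}$ is nowhere zero, so its reciprocal is smooth on $U$. Multiplying both sides of the functional equation by $1/f$ — an operation that is always well-defined between a smooth function and a distribution — identifies $\hat{\mathds 1}_{\Nz}$ on $U$ with the smooth function $1/f$, establishing smoothness there.

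For the reverse inclusion $\mathds Z \subseteq \singsupp \hat{\mathds 1}_{\Nz}$, I would argue by contradiction. If $\hat{\mathds 1}_{\Nz}$ were to agree with some smooth function $h$ on an open neighbourhood $V$ of an integer $n$, then $(1 - e^{-2\pi i y})\, h(y)$ would be a smooth function on $V$ equal, by the functional equation, to the constant function $1$. Evaluating at $y = n$ yields $0 \cdot h(n) = 0 \neq 1$, the desired contradiction, so every integer must lie in the singular support.

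The overall argument is essentially routine once the functional equation is available, and I do not anticipate any serious obstacle. The only step warranting a moment's care is confirming the identity $\mathds 1_{\Nz} - \tau_1 \mathds 1_{\Nz} = \delta_0$ distributionally and transferring it to the Fourier side via the translation covariance, both of which are standard facts from the distribution-theoretic preliminaries recorded earlier in the paper.
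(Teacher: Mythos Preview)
Your proposal is correct and aligns with the paper's approach: the paper states the lemma as an immediate consequence of the explicit formula $\hat{\mathds 1}_{\Nz} = (1 - e^{-2\pi i(\cdot - i0^+)})^{-1}$ from the preceding lemma, which encodes precisely the functional equation $(1 - e^{-2\pi i y})\,\hat{\mathds 1}_{\Nz} = 1$ you derive independently via the shift identity $\mathds 1_{\Nz} - \tau_1 \mathds 1_{\Nz} = \delta_0$. Your write-up is more explicit---supplying clean arguments for both inclusions where the paper leaves the details to the reader---but the underlying mechanism is the same.
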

Since the Fourier transform commutes with tensor products, we furthermore obtain:
\begin{lemma}
\label{lem:formula_1_L}
For $L =A \Nz^{d}$ the form factor reads
\[
\hat{\mathds 1}_{L} =  \bigotimes_{j = 1}^{d} \Big(1 - e^{-2 \pi i \big((A^T\,\bm \cdot)_j - i 0^+\big)}\Big)^{-1}.
\]
\end{lemma}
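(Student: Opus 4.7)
The proof reduces the $d$-dimensional statement to the one-dimensional form factor from the preceding lemma via two structural facts about tempered distributions: the Fourier transform intertwines pushforward and pullback under linear isomorphisms, and it commutes with tensor products.

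First, observing that $\delta_{A \bm n} = A_* \delta_{\bm n}$ for every $\bm n \in \Nz^d$, I would write $\mathds 1_{A\Nz^d} = A_* \mathds 1_{\Nz^d}$, with the pushforward defined on $\mathcal S'(\mathds R^d)$ by $(A_* T)(\phi) = T(\phi \circ A)$. Since $A$ is invertible, $\phi \mapsto \phi \circ A$ is continuous on $\mathcal S(\mathds R^d)$, so $A_*$ is continuous on $\mathcal S'$. A direct computation on Schwartz functions using the change of variables $\bm y = A^T \bm x$ in the Fourier integral gives $\widehat{A_* T} = \hat T \circ A^T$, and hence
\[
\hat{\mathds 1}_{A\Nz^d}(\bm \xi) = \hat{\mathds 1}_{\Nz^d}(A^T \bm \xi).
\]

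Second, on product test functions Fubini's theorem shows $\mathds 1_{\Nz^d} = \mathds 1_{\Nz}^{\otimes d}$, and this extends to all of $\mathcal S(\mathds R^d)$ by density via \cref{lem:tensor-product}. Since the Fourier transform is an automorphism of $\mathcal S(\mathds R^d)$ (\cref{lem:fourier-schwartz}), it commutes with tensor products of tempered distributions, so
\[
\hat{\mathds 1}_{\Nz^d} = \bigotimes_{j=1}^d \hat{\mathds 1}_{\Nz}.
\]
Substituting the explicit one-dimensional factor from \cref{lem:sing_supp_1_Z} (the preceding lemma) and composing with $A^T$ then delivers the claimed identity.

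The main technical subtlety is that the one-dimensional form factor is defined only as a weak-$*$ limit of holomorphic regularizations, so one has to commute this limit with the tensor product and the pullback under $A^T$. I would handle this by carrying the regularization along from the start: for $\varepsilon > 0$ the tempered distributions $\sum_{\bm n \in \Nz^d} \bigl(\prod_{j=1}^d e^{-\varepsilon n_j}\bigr) \delta_{A \bm n}$ converge in $\mathcal S'(\mathds R^d)$ to $\mathds 1_{A \Nz^d}$ as $\varepsilon \to 0^+$, and their Fourier transforms factorize explicitly as smooth functions $\prod_{j=1}^d\bigl(1 - e^{-\varepsilon - 2 \pi i (A^T \bm \xi)_j}\bigr)^{-1}$ of $\bm \xi$. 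At each $\varepsilon > 0$, all of the above operations (pushforward, pullback by $A^T$, tensor product) are unambiguously defined on these smooth factors, so the product formula holds in the strong sense. Passing to the limit $\varepsilon \to 0^+$ via continuity of the Fourier transform on $\mathcal S'(\mathds R^d)$ then closes the argument.
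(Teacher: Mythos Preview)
Your argument is correct and follows the same route as the paper, which simply notes before the lemma that the Fourier transform commutes with tensor products; you additionally spell out the pushforward under $A$ (which the paper also uses later, writing $\hat{\mathds 1}_{A \Nz^d}=\hat{\mathds 1}_{\Nz^d}(A^T\,\bm\cdot)$) and the $\varepsilon$-regularization, both of which are legitimate elaborations of the same idea. One cosmetic slip: the one-dimensional form factor you invoke is the unnamed lemma immediately preceding \cref{lem:sing_supp_1_Z}, not \cref{lem:sing_supp_1_Z} itself.
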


The explicit expression for the form factor allows a direct characterization of its singular support $L^*$.

\begin{lemma}
Let $L=A \Nz^d$ and set  $L^\ast= \singsupp \hat {\mathds 1}_L$. Then
\[
L^* \subseteq \big\{ \bm y \in \mathds R^d: A^T \bm y \text{ has at least one integer component} \big\}.
\]
\end{lemma}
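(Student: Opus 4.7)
The plan is to combine the explicit product representation of $\hat{\mathds 1}_L$ from \cref{lem:formula_1_L} with the one-dimensional identification of singular support in \cref{lem:sing_supp_1_Z}, and then transport the result through the linear change of variables $A^T$. The whole argument is really just bookkeeping on top of those two lemmas.

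First I would write $\hat{\mathds 1}_L$ as the pullback under the linear isomorphism $\Phi: \bm y \mapsto A^T \bm y$ of the tensor product $u_1 \otimes \cdots \otimes u_d$, where each factor $u_j(s) = (1 - e^{-2\pi i (s - i 0^+)})^{-1}$ is the one-dimensional distribution appearing in \cref{lem:formula_1_L}. By \cref{lem:sing_supp_1_Z}, $\singsupp u_j = \mathds Z$, and from the defining series (or from the fact that each $u_j$ is nowhere locally zero) one has $\supp u_j = \mathds R$. I would then invoke the classical inclusion for tensor products of distributions,
\[
\singsupp(u \otimes v) \subseteq (\supp u \times \singsupp v) \cup (\singsupp u \times \supp v),
\]
and iterate it across the $d$ factors to obtain
\[
\singsupp(u_1 \otimes \cdots \otimes u_d) \subseteq \bigcup_{j=1}^d \big\{ \bm s \in \mathds R^d : s_j \in \mathds Z \big\} = \big\{ \bm s \in \mathds R^d : s_j \in \mathds Z \text{ for some } j \big\}.
\]

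Finally, since $A$ is regular, $\Phi$ is a smooth diffeomorphism of $\mathds R^d$, and pullback by such a map transports singular support by the preimage: $\singsupp(\Phi^* T) = \Phi^{-1}(\singsupp T)$ for every $T \in \mathcal D'(\mathds R^d)$. Applying this to $T = u_1 \otimes \cdots \otimes u_d$ and $\Phi^* T = \hat{\mathds 1}_L$ yields
\[
L^* = \singsupp \hat{\mathds 1}_L \subseteq \Phi^{-1}\big(\{ \bm s : s_j \in \mathds Z \text{ for some } j \}\big) = \big\{ \bm y \in \mathds R^d : A^T \bm y \text{ has at least one integer component} \big\},
\]
which is the asserted inclusion. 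The only mildly delicate point is checking that the explicit expression in \cref{lem:formula_1_L} is literally a distributional tensor product composed with $A^T$ (which is immediate from the separability of the formula), together with quoting the tensor-product singular-support inclusion and the pullback identity from standard distribution theory; no new analytic input is required beyond the two preceding lemmas.
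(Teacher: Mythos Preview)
Your proof is correct and follows essentially the same route as the paper: reduce via the linear isomorphism $\bm y \mapsto A^T\bm y$ to the case $L=\Nz^d$, then use the tensor structure together with \cref{lem:sing_supp_1_Z} to bound the singular support. The only cosmetic difference is that the paper argues directly that the tensor product of the one-dimensional smooth identifications is smooth on $(\mathds R\setminus\mathds Z)^d$, whereas you invoke the general inclusion $\singsupp(u\otimes v)\subseteq(\supp u\times\singsupp v)\cup(\singsupp u\times\supp v)$; both yield the same set, and your route needs the (true) extra observation $\supp u_j=\mathds R$.
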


\begin{proof}
    By \cref{lem:sing_supp_1_Z}, we know that $\hat {\mathds 1}_{\Nz}$ has singular support $\mathds Z$
    and hence can be identified with a smooth function $g$ on $\mathds R\setminus \mathds Z$.
    Due to the tensor representation  $\hat {\mathds 1}_{\Nz^d} = \big(\hat {\mathds 1}_{\Nz}\big)^{\otimes d}$,
    we can conclude that $\hat {\mathds 1}_{\Nz^d}$ can be identified by the smooth function $g^{\otimes d}$
    on $(\mathds R\setminus \mathds  Z)^d$. Hence we find for its singular support
    \[
    \singsupp \hat {\mathds 1}_{\Nz^d} \subseteq \mathds R^d\setminus (\mathds R\setminus \mathds  Z)^d
    =\big\{ \bm y \in \mathds R^d: \bm y \text{ has at least one integer component} \big\}.
    \]
    The assertion for the general case follows after noting that
    $\hat {\mathds 1}_{A \Nz^d}=\hat {\mathds 1}_{\Nz^d}(A_{\Lambda}^T \,\bm \cdot)$.
\end{proof}

\begin{theorem}
    Let $L = A \Nz^d-\bm x$ with $\bm x\in \mathds R^d$. Then for $t> 0$ and  $\bm y\in \mathds R^d$ 
    \[
       t\mapsto  \psi_L(t,\bm y)
    \]
    extends to a smooth function at $t=0$.
\end{theorem}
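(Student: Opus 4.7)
The plan is to use the non-oscillatory integral representation of $\psi_L$ furnished by \cref{thm:integral_representation}. I would first restrict to the regime $\bm x \in A^{-T}\mathds R^d_{<0}$, where the integral representation holds as stated, and then transfer the conclusion to general $\bm x \in \mathds R^d$ via the analytic continuation in $\bm x$ already provided by \cref{thm:integral_representation}.

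Write $\bm\alpha = A^T\bm x$ and $\bm\beta = A^T\bm y$, and let $J = \{j : \bm\beta_j \in \mathds Z\}$ index the critical components. The denominator factors in the integrand split into two types. For $j \notin J$, the factor $1 - e^{-2\pi i [t\bm\xi_j + \bm\beta_j + it^2\bm\alpha_j]}$ is smooth in $t$ at $t=0$ with nonzero limit $1 - e^{-2\pi i \bm\beta_j}$, so its reciprocal is smooth there. For $j \in J$, where $e^{-2\pi i \bm\beta_j}=1$, the factor vanishes at $t=0$, and using the entire function $\phi(w) = (1 - e^w)/(-w)$ with $\phi(0) = 1$ I would extract
\[
1 - e^{-2\pi i t(\bm\xi_j + it\bm\alpha_j)} = 2\pi i t(\bm\xi_j + it\bm\alpha_j)\,\phi\!\big(\!-2\pi i t(\bm\xi_j + it\bm\alpha_j)\big).
\]
Combining the $|J|$ factors of $2\pi i t$ with the $t^d$ prefactor leaves the smooth factor $t^{d - |J|}$ (note $d - |J| \geq 0$), together with a smooth bounded amplitude $P(t,\bm\xi)$ coming from the inverses of the $\phi$-factors and the $j \notin J$ factors, and a Cauchy-type integral with poles at $\bm\xi_j = -it\bm\alpha_j$. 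For $\bm\alpha_j < 0$ these poles lie in the upper half-plane, so the real integration contour is unobstructed.

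The key step is to show that the remaining integral $\int e^{-\pi(A^{-T}\bm\xi)^2} P(t,\bm\xi) \prod_{j \in J}(\bm\xi_j + it\bm\alpha_j)^{-1} d\bm\xi$ is smooth in $t$ at $t=0$. For this I would use the Fourier representation
\[
\frac{1}{\bm\xi_j + it\bm\alpha_j} = i \int_0^\infty e^{-is_j(\bm\xi_j + it\bm\alpha_j)}\, ds_j,
\]
which is absolutely convergent since $\bm\alpha_j < 0$ makes the integrand decay. After Fubini and evaluation of the Gaussian integral in $\bm\xi$, the remaining integral over $\bm s_J \in [0,\infty)^{|J|}$ has the bounded oscillatory weight $|e^{t\bm s_J \cdot \bm\alpha_J}| \le 1$ multiplied by a kernel that is smooth in $t$ and of Schwartz decay in $\bm s_J$. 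Differentiation under the integral sign then yields smoothness of the integral in $t$ at $t=0$, and hence smoothness of $\psi_L(\,\cdot\,,\bm y)$ at $t=0$.

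Finally, the extension to general $\bm x \in \mathds R^d$ follows from the analytic continuation in $\bm x$ established in \cref{thm:integral_representation}: the Taylor coefficients of $\psi_L(\,\cdot\,,\bm y)$ at $t = 0$ computed above are analytic functions of $\bm x$ on $A^{-T}\mathds R^d_{<0}$ and extend analytically to $\mathds R^d$, giving the required smoothness for general $\bm x$. The main technical obstacle will be the uniform bounds on $P(t,\bm\xi)$ needed to justify Fubini and dominated convergence: although the Gaussian $e^{-\pi(A^{-T}\bm\xi)^2}$ supplies the decay in $\bm\xi$, the inverse $\phi$-factors have near-zeros at points $\bm\xi_j = -k/t$ for nonzero $k \in \mathds Z$, and controlling these uniformly for small $t>0$ is the delicate part of the argument.
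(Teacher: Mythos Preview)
Your approach is genuinely different from the paper's. The paper does not touch the integral representation of \cref{thm:integral_representation} for this statement. Instead it returns to the dual description, which in one dimension reads
\[
(T\varphi)(t)=\big(\hat{\mathds 1}_{\Nz-x}\ast\varphi(\cdot/t)\big)(y)=t\sum_{z\in\Nz-x}e^{-2\pi i yz}\,\hat\varphi(tz),
\]
and applies the Euler--Maclaurin expansion of order $2\ell$ to the sum $t\sum_{z\in\Nz}\hat\varphi(t(z-x))$. The integral part, the boundary differential terms, and the remainder are each shown to lie in $C^{2\ell-1}([0,1])$ with bounds by Schwartz seminorms of $\varphi$; letting $\ell$ range over $\N$ gives $C^\infty$. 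A separate tensor-product lemma, using $\hat{\mathds 1}_{A\Nz^d}=\bigotimes_j(1-e^{-2\pi i((A^T\bm\cdot)_j-i0^+)})^{-1}$, then lifts the one-dimensional conclusion to arbitrary $d$, to every $\bm y\in\mathds R^d$, and to every $\bm x\in\mathds R^d$ at once, without any restriction to $A^{-T}\mathds R^d_{<0}$ and without analytic continuation.

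Your route has a real gap at exactly the point you flag. The amplitude $P(t,\bm\xi)$ is \emph{not} uniformly bounded: at $\bm\xi_j=-k/t$ with $k\in\mathds Z\setminus\{0\}$ one computes $|\phi|\sim t^{2}|\bm\alpha_j|/|k|$, hence $|1/\phi|\sim|k|/(t^{2}|\bm\alpha_j|)\to\infty$ as $t\to 0$. The Gaussian tail does beat this pointwise, but to conclude that the $\bm\xi$-integral is smooth in $t$ you need uniform Schwartz-type control on $e^{-\pi(A^{-T}\bm\xi)^2}P(t,\bm\xi)$ and on all of its $t$-derivatives, which you have not supplied; each $t$-derivative of $P$ brings down further powers of $|\bm\xi|$ and of $t^{-1}$. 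Relatedly, the step ``evaluation of the Gaussian integral in $\bm\xi$'' cannot be carried out as written: with $P(t,\bm\xi)$ present the $\bm\xi$-integral is the Fourier transform of a non-Gaussian function, and its Schwartz decay in $\bm s_J$ and smoothness in $t$ are precisely the missing estimates. Finally, the extension to general $\bm x$ by analytic continuation does not close the argument: \cref{thm:integral_representation} provides analyticity in $\bm x$ only for fixed $t>0$, and smoothness at $t=0$ on the open cone $A^{-T}\mathds R^d_{<0}$ together with analyticity in $\bm x$ for $t>0$ does not propagate the smooth extension at $t=0$ to all $\bm x$ without a further uniformity argument. The paper's Euler--Maclaurin route sidesteps every one of these difficulties.
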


The proof of the theorem requires a series of lemmas. In the following, we use the Euler--Maclaurin expansion, see \cite{apostol1998introduction,buchheit2022singular}.
\begin{theorem}[Euler--Maclaurin expansion]\label{thm:euler-maclaurin}
Let $a,b \in \mathds Z$ with $a<b$, $\delta\in(0,1]$, $\ell\in \Nz$. For a function $f\in C^{\ell+1}[a+\delta,b+\delta]$, the Euler--Maclaurin (EM) expansion takes the form
\begin{align*}\label{eq:Euler--Maclaurin-expansion}
  \sum_{n=a+1}^b f(n)=\int\limits_{a+\delta}^{b+\delta} f(y)\,\mathrm d y &- \sum_{k=0}^ {\ell} (-1)^k\frac{B_{k+1}(1+y-\lceil y \rceil)}{(k+1)!} f^{(k)}(y)\bigg\vert^{y=b+\delta}_{y=a+\delta} \notag \\ &+ \int \limits_{a+\delta}^{b+\delta} (-1)^{\ell} \frac{B_ {\ell+1}(1+y-\lceil y \rceil)}{(\ell+1)!} f^{(\ell+1)}(y)\,\mathrm d y,
\end{align*}
with $\lceil y \rceil$ the smallest integer larger than or equal to $y$. 
Furhtermore, $B_\ell$ denote the Bernoulli polynomials, which are defined via the recurrence relation
\begin{equation*}\label{eq:bernoulli_definition}
\begin{aligned}
  B_0(y) =1,  \quad
  B'_\ell(y)=\ell B_ {\ell - 1}(y),\quad 
  \int\limits_0^1 B_\ell(y)\, \mathrm d y=0,\quad \ell\ge 1.
\end{aligned}
\end{equation*}
\end{theorem}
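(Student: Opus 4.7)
My plan is to prove smoothness of $t\mapsto\psi_L(t,\bm y)$ at $t=0$ by applying the Euler--Maclaurin formula (Theorem~\ref{thm:euler-maclaurin}) iteratively along each axis of the discrete cone $\Nz^d$, combined with a direct analysis of the resulting oscillatory-Gaussian integrals. Fourier inversion of the convolution $\psi_L=\hat{\mathds 1}_L\ast e^{-\pi\bm \cdot^2/t^2}$, together with the explicit form of $\hat{\mathds 1}_L$ from Lemma~\ref{lem:formula_1_L}, yields for every $t>0$
\[
\psi_L(t,\bm y)=t^d\,e^{2\pi i\bm y\cdot\bm x}\sum_{\bm n\in\Nz^d}e^{-2\pi i(A^T\bm y)\cdot\bm n}\,e^{-\pi t^2|A\bm n-\bm x|^2},
\]
so the task reduces to showing that this $d$-dimensional quadrant theta sum, multiplied by $t^d$, admits a $C^\infty$ extension at $t=0$ for every fixed $\bm x,\bm y\in\mathds R^d$.

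I would first iterate Theorem~\ref{thm:euler-maclaurin} along each of the $d$ coordinates of $\bm n$, with $\delta=1$ and $b\to\infty$; since the summand $f_t(\bm n):=e^{-2\pi i(A^T\bm y)\cdot\bm n}e^{-\pi t^2|A\bm n-\bm x|^2}$ and every $\bm n$-derivative decay super-exponentially for $t>0$, all boundary contributions at infinity vanish. After all $d$ applications, one obtains, to any prescribed multi-order $\bm\ell$, a decomposition of the form
\[
\sum_{\bm n\in\Nz^d}f_t(\bm n)=\sum_{S\subseteq\{1,\dots,d\}}\sum_{\bm k_{\bar S}\le\bm\ell}c_{S,\bm k}\int_{[0,\infty)^S}\partial^{\bm k}_{\bar S}f_t(\bm u_S,\bm 0_{\bar S})\,d\bm u_S+R_{\bm\ell}(t),
\]
with Bernoulli-polynomial coefficients $c_{S,\bm k}$ and a remainder $R_{\bm\ell}$ that becomes arbitrarily smooth near $t=0$ as $\bm\ell$ grows.

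The crux is to verify that each summand, once multiplied by the outer $t^d$ prefactor, is smooth in $t$ at $t=0$. For the fully-integrated term $S=\{1,\dots,d\}$ the substitution $\bm w=t(A\bm u-\bm x)$ converts it into
\[
\frac{1}{V_\Lambda}\int_{A[0,\infty)^d-t\bm x}e^{-2\pi i(\bm y/t)\cdot\bm w}\,e^{-\pi|\bm w|^2}\,d\bm w,
\]
a Gaussian integral over a smoothly-shifting cone. When $\bm y=\bm 0$ the integrand is fixed and the cone apex $-t\bm x$ depends smoothly on $t$, so smoothness in $t$ follows from Lebesgue dominated convergence for derivatives. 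When $\bm y\neq\bm 0$, the rapid non-stationary phase $e^{-2\pi i(\bm y/t)\cdot\bm w}$ combined with the Gaussian weight, analyzed in the equivalent series representation via Poisson summation, yields a smooth polynomial-in-$t$ piece plus a super-exponentially small $O(t^\infty)$ remainder, and so assembles into a smooth function at $t=0$. The $|S|<d$ terms inherit an extra $t^{d-|S|}$ prefactor and reduce to lower-dimensional Gaussian integrals of the same type, hence are smooth and vanish at $t=0$ to order $d-|S|$.

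Initially the argument runs under the simplifying assumption $\bm x\in A^{-T}\mathds R^d_{<0}$, so that $-t\bm x$ stays in the interior of the cone $A[0,\infty)^d$ for all $t\ge 0$ and no piece of the expansion sees a boundary crossing of the cone apex. The general case $\bm x\in\mathds R^d$ follows by the analytic continuation in $\bm x$ guaranteed by Theorem~\ref{thm:integral_representation}, since each piece of the EM expansion depends analytically on $\bm x$. The main obstacle is the delicate case $\bm y\in L^*$: phase averaging fails along those coordinates where $(A^T\bm y)_j\in\mathds Z$, and one must show that the $t^d$ prefactor together with the Gaussian weight still cancels the $O(t^{-d})$ growth of the unsuppressed directions. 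This is precisely where the careful bookkeeping of the EM expansion, combined with analytic continuation in $\bm x$, delivers the desired all-orders Taylor expansion of $\psi_L$ at $t=0$.
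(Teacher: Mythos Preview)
Your proposal does not address the stated theorem. The statement asks for a proof of the classical Euler--Maclaurin expansion formula (Theorem~\ref{thm:euler-maclaurin}), which expresses a finite sum $\sum_{n=a+1}^b f(n)$ in terms of an integral, boundary terms involving Bernoulli polynomials, and a remainder integral. Your proposal, by contrast, is an outline of a proof that $t\mapsto\psi_L(t,\bm y)$ extends smoothly to $t=0$ for the corner $L=A\Nz^d-\bm x$ --- a completely different assertion appearing elsewhere in the paper. You \emph{use} Theorem~\ref{thm:euler-maclaurin} as a tool, but you never attempt to establish it.

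For the record, the paper does not prove Theorem~\ref{thm:euler-maclaurin} either: it is quoted as a classical result with references to Apostol and to the authors' earlier work. A self-contained proof would proceed by repeated integration by parts on each subinterval $[n-1+\delta,n+\delta]$, using the recurrence $B_{k+1}'=(k+1)B_k$ and the normalization $\int_0^1 B_\ell=0$ to generate the Bernoulli boundary terms and the periodic-Bernoulli remainder; see, e.g., Apostol's \emph{Introduction to Analytic Number Theory}. Your text contains none of this.

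As a separate remark: even for the smoothness-of-$\psi_L$ statement you are actually sketching, the treatment of the oscillatory integral when $\bm y\neq\bm 0$ is vague (``yields a smooth polynomial-in-$t$ piece plus a super-exponentially small remainder'' is asserted, not shown), and the paper's own route is different --- it reduces to the one-dimensional case via tensor products (Lemma~\ref{lem:smooth_for_all_y} plus the tensor lemma that follows) rather than iterating the $d$-dimensional EM formula directly.
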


\begin{lemma}\label{lem:japanese-bracket-est}
For $\bm x \in \mathds R^d$, let
\[
\langle \bm x \rangle = (1 + |\bm x|^2)^{1/2}.
\]
Then for all $\bm x, \bm y \in \mathds R^d$ and $w \in \mathds R$,
\[
\langle \bm x + \bm y \rangle^w \leq 2^{|w|/2} \langle \bm x \rangle^w \langle \bm y \rangle^{|w|}
\]
\end{lemma}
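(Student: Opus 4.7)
The plan is to reduce everything to the single base-case inequality $\langle \bm x + \bm y \rangle^2 \le 2 \langle \bm x \rangle^2 \langle \bm y \rangle^2$, which is the standard Peetre-type estimate. To establish it, I would expand $\langle \bm x + \bm y \rangle^2 = 1 + |\bm x + \bm y|^2$, apply the elementary bound $|\bm x + \bm y|^2 \le 2|\bm x|^2 + 2|\bm y|^2$, and then observe that
\[
1 + 2|\bm x|^2 + 2|\bm y|^2 \le 2 + 2|\bm x|^2 + 2|\bm y|^2 + 2|\bm x|^2|\bm y|^2 = 2(1 + |\bm x|^2)(1 + |\bm y|^2),
\]
where the inequality holds because the right-hand side simply adds the nonnegative quantity $1 + 2|\bm x|^2|\bm y|^2$. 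This gives the quadratic form of the estimate.

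For $w \ge 0$, the claim then follows by raising both sides to the $(w/2)$-th power, which preserves the inequality (all quantities involved are positive) and produces
\[
\langle \bm x + \bm y \rangle^w \le 2^{w/2} \langle \bm x \rangle^w \langle \bm y \rangle^w.
\]
Since $|w| = w$ in this regime, this is exactly the asserted bound.

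For $w < 0$, I would introduce the substitution $\bm u = \bm x + \bm y$ and $\bm v = -\bm y$, so that $\bm x = \bm u + \bm v$ and $\langle \bm v \rangle = \langle \bm y \rangle$. Applying the already-established $w \ge 0$ version with exponent $|w|$ to the pair $(\bm u, \bm v)$ yields
\[
\langle \bm x \rangle^{|w|} = \langle \bm u + \bm v \rangle^{|w|} \le 2^{|w|/2} \langle \bm x + \bm y \rangle^{|w|} \langle \bm y \rangle^{|w|}.
\]
Dividing by the positive quantity $\langle \bm x \rangle^{|w|} \langle \bm x + \bm y \rangle^{|w|}$ and rearranging gives $\langle \bm x + \bm y \rangle^{-|w|} \le 2^{|w|/2} \langle \bm x \rangle^{-|w|} \langle \bm y \rangle^{|w|}$, which is the claim for negative $w$.

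There is no genuine obstacle in this argument; it is essentially a textbook algebraic manipulation. The only point worth care is that the statement is asymmetric in $\bm x$ and $\bm y$ (the exponent on $\langle \bm y \rangle$ is always $|w|$, not $w$), and this asymmetry is precisely what the swap-and-invert trick in the $w < 0$ case produces automatically.
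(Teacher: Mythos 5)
Your argument is correct: the base inequality $\langle \bm x + \bm y \rangle^2 \le 2\langle \bm x \rangle^2 \langle \bm y \rangle^2$ is established cleanly, the monotone power step handles $w \ge 0$, and the substitution $\bm x = (\bm x + \bm y) + (-\bm y)$ followed by division yields exactly the asymmetric form with $\langle \bm y \rangle^{|w|}$ for $w < 0$. The paper states this lemma without proof (it is the standard Peetre inequality), and your write-up is precisely the textbook argument one would supply, so there is nothing to reconcile.
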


\begin{lemma}
\label{lem:smooth_for_all_y}
For $u = \hat {\mathds 1}_{\Nz-x}$, $x\in \mathds R$, \cref{lem:scaled-conv-smooth-continuous} holds for all $y\in \mathds R$.
\end{lemma}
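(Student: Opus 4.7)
The plan is to derive an explicit Fourier-series representation of $T\varphi$ using $u = \hat{\mathds 1}_{\Nz - x}$, and then combine the Euler--Maclaurin expansion of \cref{thm:euler-maclaurin} with an oscillatory-integral analysis to show that, for every fixed $y \in \mathds R$, the map $t \mapsto T\varphi(t, y)$ extends smoothly to $t = 0$, including the integer values of $y$ where \cref{lem:scaled-conv-smooth-continuous} does not directly apply.

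First I would establish
\[
T \varphi(t, y) = t\, e^{2 \pi i x y}\, \sum_{n = 0}^\infty \hat \varphi\bigl(t(x - n)\bigr)\, e^{-2 \pi i y n}, \qquad t > 0,\ y \in \mathds R,
\]
via the duality $\langle \hat {\mathds 1}_{\Nz - x}, \phi \rangle = \langle {\mathds 1}_{\Nz - x}, \hat \phi \rangle$ applied to $\phi(z) = \varphi((y - z)/t)$, whose Fourier transform evaluates to $t\, e^{-2 \pi i y \xi}\, \hat \varphi(-t \xi)$. For every $t > 0$ the series converges absolutely with super-polynomial decay in $n$, so $T\varphi$ is smooth on $(0, 1] \times \mathds R$ immediately; the task is to extend in $t$ down to $t = 0$ for each fixed $y$.

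Fix $y$ and set $F_{t, y}(s) = \hat \varphi\bigl(t(x - s)\bigr)\, e^{-2 \pi i y s}$. Applying \cref{thm:euler-maclaurin} at arbitrary order $\ell$ (the boundary term at infinity being annihilated by the Schwartz decay of $F_{t, y}$) gives
\[
\sum_{n = 0}^\infty F_{t, y}(n) = \int_0^\infty F_{t, y}(s)\, \text d s + \sum_{k = 0}^\ell c_k\, F_{t, y}^{(k)}(0) + R_\ell(t, y),
\]
with $c_k = (-1)^k B_{k + 1}(1)/(k + 1)!$ and $R_\ell$ a Bernoulli-weighted integral of $F_{t, y}^{(\ell + 1)}$. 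By the Leibniz rule each $F_{t, y}^{(k)}(0)$ is a polynomial in $y$ whose coefficients are products $t^{k - j}\, \hat \varphi^{(k - j)}(tx)$, so after multiplication by the prefactor $t\, e^{2 \pi i x y}$ the boundary contributions are smooth in $t$ on $[0, 1]$ and vanish at $t = 0$. For the integral term, the substitution $v = t(x - s)$ followed by the split $\int_{-\infty}^{tx} = \int_{-\infty}^0 + \int_0^{tx}$ yields
\[
t\, e^{2 \pi i x y} \int_0^\infty F_{t, y}(s)\, \text d s = \int_{-\infty}^0 \hat \varphi(v)\, e^{2 \pi i y v/t}\, \text d v + t \int_0^x \hat \varphi(tw)\, e^{2 \pi i y w}\, \text d w,
\]
in which the second summand is manifestly $C^\infty$ in $t$ on $[0, 1]$, while the first equals the constant $\int_{-\infty}^0 \hat \varphi$ at $y = 0$ and, for $y \neq 0$, is shown by iterated integration by parts (driven by the Schwartz decay of $\hat \varphi^{(k)}$ at $-\infty$) to admit an asymptotic expansion in $t$ to all orders, hence to be $C^\infty$ in $t$ at $t = 0$ and flat there. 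A standard estimate shows that $t\, e^{2 \pi i x y}\, R_\ell$ is of class $C^\ell$ in $t$ on $[0, 1]$ for every $\ell$, so letting $\ell \to \infty$ yields $T\varphi(\cdot, y) \in C^\infty([0, 1])$.

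The main obstacle is the dichotomy between the $y = 0$ and $y \neq 0$ behaviour of the integral $\int_{-\infty}^0 \hat \varphi(v)\, e^{2 \pi i y v / t}\, \text d v$: for $y = 0$ it converges to the nonzero constant $\int_{-\infty}^0 \hat \varphi$, whereas for $y \neq 0$ it is flat in $t$ to all orders at $t = 0$. Producing a single smoothness-in-$t$ statement covering every $y \in \mathds R$ requires matching these two regimes with the boundary contributions of the Euler--Maclaurin expansion, which precisely encode the arithmetic distinction between integer and non-integer values of $y$.
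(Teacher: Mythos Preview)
Your Euler--Maclaurin strategy is the right idea, but you have made the problem harder than it needs to be, and several steps are either incorrect or incomplete.

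The key simplification you miss is this: \cref{lem:scaled-conv-smooth-continuous} already covers every $y\notin \singsupp u=\mathds Z$, so the only new cases are $y\in\mathds Z$. For such $y$ one has $e^{-2\pi i y n}=1$ for all $n\in\Nz$, and the sum collapses to the \emph{non-oscillatory} series $t\sum_{n\in\Nz}\hat\varphi(t(n-x))$. Applying \cref{thm:euler-maclaurin} to this non-oscillatory summand, the integral term becomes $\int_{\mathds R_+}\hat\varphi(z-tx)\,\mathrm d z$ after a change of variables; this is visibly smooth in $t$, with derivatives bounded by Schwartz seminorms of $\hat\varphi$ via dominated convergence. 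By contrast, because you keep the factor $e^{-2\pi i y s}$ in $F_{t,y}$ and then substitute, you are forced to analyse $\int_{-\infty}^0\hat\varphi(v)\,e^{2\pi i y v/t}\,\mathrm d v$, an oscillatory integral with frequency $\sim 1/t$. This is an unnecessary complication.

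Two concrete errors follow from this. First, your claim that this oscillatory integral is ``flat'' at $t=0$ is false: a single integration by parts gives the leading term $\dfrac{t}{2\pi i y}\hat\varphi(0)$, so the first derivative at $0$ is generically nonzero. Second, the inference ``admits an asymptotic expansion to all orders, hence $C^\infty$ at $t=0$'' is invalid as stated: $t^2\sin(1/t)$ satisfies $|f(t)|\le t^2$ yet $f'\notin C^0$ at $0$. To turn the expansion into a genuine $C^\infty$ statement you would need to control each $\partial_t^k$ of the remainder separately, which can be done by further integrations by parts but is not what you wrote.

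Finally, the lemma asserts that $T:\mathcal S(\mathds R)\to C^\infty([0,1])$ is \emph{continuous}, not merely that $T\varphi$ is smooth for each $\varphi$. The paper establishes this by bounding every $t$-derivative of the integral, boundary, and remainder pieces explicitly by Schwartz seminorms of $\hat\varphi$; your sketch does not address this. Your closing paragraph, which identifies the $y=0$ versus $y\neq 0$ matching as ``the main obstacle,'' is precisely the difficulty that evaporates once you restrict attention to $y\in\mathds Z$ and drop the oscillatory factor from the outset.
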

\begin{proof}
The singular support of $u = \hat{\mathds 1}_{\Nz - x}$ is
\[
\singsupp u = L^* = \mathds Z
\]
by \cref{lem:sing_supp_1_Z}.
\cref{lem:scaled-conv-smooth-continuous} proves the assertion for $y \in \mathds R\setminus L^\ast$.
Hence, we only need to study $y \in L^* = \mathds Z$.
The convolution with the form factor reads 
    \[
    \big( T \varphi \big)(t)=t \sum_{z\in \Nz-x}  e^{-2\pi i z\cdot y} \hat \varphi(t z) = e^{2\pi i z\cdot x} t \sum_{z\in \Nz}   \hat \varphi(t (z-x)),
    \quad t > 0,
    \]
    where we have used that $e^{-2\pi i y\cdot z}=1$ for $y \in \mathds Z$ and $z\in \Nz$.
    To prove smoothness of $T \varphi$ on $[0, 1]$ we show that $T \varphi \in C^{2 \ell - 1}([0, 1])$ for all $\ell \in \N$.
    Owing to the equality
    \[
    C^\infty([0, 1]) = \bigcap_{\ell \in \N} C^{2 \ell - 1}([0, 1])
    \]
    it follows $T \varphi \in C^\infty([0, 1])$.
    Furthermore, we bound all derivatives by Schwartz seminorms of $\varphi$ thus implying the continuity of the mapping $T$
    from $S(\mathds R)$ to $C^\infty([0, 1])$.
    Now, let $\ell \in \N$.
    We apply the Euler--Maclaurin expansion of order $2 \ell$ as stated in \cref{thm:euler-maclaurin},
    \[
    t\sum_{z\in \Nz}   \hat \varphi(t (z-x)) = \mathcal I(t) + \mathcal D(t) +  \mathcal R(t),
    \]
    with integral part $\mathcal I$, differential operator part $\mathcal D$, and remainder $\mathcal R$ given by
    \begin{align*}
    \mathcal I(t) &= t\int_{\mathds R_+} \hat \varphi(t(z-x)) \,\mathrm d z,\\
    \mathcal D(t) &= \sum_{k = 0}^{2\ell-1} c_k t^{k+1} \hat \varphi^{(k)}(-t x ),\\
    \mathcal R(t) &= \int_{\mathds R_+} B_{2\ell}(z) t^{2\ell +1} \hat \varphi^{(2\ell)}(t (z-x))\,\mathrm d z,
    \end{align*}
    for $t \in (0, 1]$ with constants $c_k\in \mathds R$ and $B_{2\ell}$ a continuous bounded $\mathds Z$-periodic function.
    We demonstrate smoothness of the sum by showing the smoothness of its three parts.
    For the integral part, we find after a change of variables that
    \[
        \mathcal I(t) = \int_{\mathds R_+} \hat \varphi(z-t x) \mathrm d z, \quad t \in [0, 1].
    \]
    Now,
    \[
    |\partial_t^n(\hat \varphi(z - t x))| = |x|^n |\hat \varphi^{(n)}(z - t x)|
    \leq |x|^n \| \hat \varphi \|_{n, 1} \langle z - t x \rangle^{-1}
    \leq \sqrt{2} \langle x \rangle^{n + 1} \| \hat \varphi \|_{n, 1} \langle z \rangle^{-1},
    \]
    by \cref{lem:japanese-bracket-est}, where $\| \cdot \|_{k, p}$ denotes a weighted Schwartz seminorm
    \[
    \| \psi \|_{n, p} = \sup_{z \in \mathds R} |\langle z \rangle^p \psi^{(n)}(z)|, \quad \psi \in \mathcal S(\mathds R).
    \]
    Above estimates constitute uniform integrable bounds on all derivatives with respect to $t$, valid on $[0, 1]$.
    Hence, $\mathcal I$ is a smooth function on $(0, 1]$ with finite limits of all derivatives for $t \to 0$ by the dominated
    convergence theorem.
    Consequently, $\mathcal I$ is a smooth function on $[0, 1]$ with
    \[
    |\partial_t^n \mathcal I(t)| \leq C_n \| \hat \varphi \|_{n, 1},
    \]
    for a constant $C_n > 0$ that only depends on the derivative order.
    Furthermore, $\mathcal D$ is a smooth function on $[0, 1]$ as the composition of smooth functions.
    A direct computation reveals the uniform bound in terms of seminorms of $\hat \varphi$,
    \[
    |\partial_t^n \mathcal D(t)| \leq \sum_{k = 0}^{2 \ell - 1 + n} d_k \| \hat \varphi \|_{k, 0},
    \]
    for constants $d_k > 0$.
    For the remainder $\mathcal R$ we first note that a similar argument as for $\mathcal I$
    shows that $\mathcal R$ is smooth on $(0, 1]$.
    We now prove that the first $2 \ell - 1$ derivatives of $\mathcal R$ vanish at $t=0$.
    We have that
    \begin{align*}
    \partial_t ^n \mathcal R(t) &= \sum_{m = 0}^n\int_{\mathds R_+} c_m B_{2\ell}(z) t^{2\ell +1-m} (t z)^{n-m}\hat \varphi^{(2\ell+n-m)}(t (z-x))\,\mathrm d z,\\ 
    &= \sum_{m = 0}^n c_m t^{2\ell-m} \int_{\mathds R_+}B_{2\ell}(z/t) z^{n-m}\hat \varphi^{(2\ell+n-m)}(z-tx)\,\mathrm d z,
    \end{align*} 
    for $t > 0$.
    Using that $B_{2 \ell}$ is bounded on $\mathds R$, we can establish the bound
    \[
    |\partial_t ^n \mathcal R(t)|\le
    \sum_{m=0}^n r_m t^{2\ell - m} \| \hat \varphi \|_{2 \ell + n - m, n - m + 1},
    \]
    for constants $r_m > 0$.
    It follows
    \[
    \lim_{t\to 0}  \partial_t^n \mathcal R(t) = 0,
    \]
    for $n \leq 2 \ell - 1$ and hence $\mathcal R \in C^{2 \ell - 1}([0, 1])$.

    To summarize, we have shown that all derivatives of $T \varphi$ on $[0, 1]$ are bounded by a sum of Schwartz seminorms of $\hat \varphi$.
    In particular, for all $n \in \Nz$ there exits a continuous seminorm $q_n$ on $S(\mathds R)$ with
    \[
    \sup_{t \in [0, 1]} |\partial_t^n T \varphi(t)| \leq q_n(\hat \varphi).
    \]
    Since the Fourier transform is automorphism on $S(\mathds R)$, there is a continuous seminorm $p_n$ with
    \[
    \sup_{t \in [0, 1]} |\partial_t^n T \varphi(t)| \leq q_n(\hat \varphi) \leq p_n(\varphi),
    \]
    hence proving the assertion.
\end{proof}

\begin{lemma}
Let $u_1, \dots, u_d \in S'(\mathds R)$ be tempered distributions. Fix $\bm y \in \mathds R^d$ such that the mapping $T_j$
defined as in \cref{lem:scaled-conv-smooth-continuous} for $u_j$ and $y_j$, $j = 1, \dots, d$, is well-defined and continuous.
Then, the corresponding mapping $T$ for the tensor product
\[
u = \bigotimes_{j = 1}^d u_j
\]
and $\bm y$ is well-defined and continuous.
In particular, the mapping
\[
(0, 1] \to \mathds C,~t \mapsto \Big( u \ast \varphi(\bm \cdot / t) \Big)(\bm y)
\]
extends smoothly to $t = 0$ for all Schwartz functions $\varphi \in \mathcal S(\mathds R^d)$.
\end{lemma}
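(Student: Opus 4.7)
The plan is to exploit the tensor product structure of $u = \bigotimes_{j=1}^d u_j$ to reduce the $d$-dimensional claim to the one-dimensional hypotheses on the $T_j$, and then extend from the algebraic tensor product to all of $\mathcal S(\mathds R^d)$ by density. Throughout, $\bm y \in \mathds R^d$ is fixed with each $y_j$ in the set on which $T_j$ is well-defined and continuous, so the target space will be $C^\infty([0,1])$.

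First, I verify the claim on elementary tensors $\varphi = \bigotimes_{j=1}^d \varphi_j$, $\varphi_j \in \mathcal S(\mathds R)$. Because $u$ is the tensor product of the $u_j$ and the rescaled Gaussian/test function decouples, the convolution factors as
\[
\bigl(u \ast \varphi(\bm \cdot / t)\bigr)(\bm y) = \prod_{j=1}^d \bigl(u_j \ast \varphi_j(\cdot / t)\bigr)(y_j) = \prod_{j=1}^d (T_j \varphi_j)(t, y_j),\quad t > 0.
\]
By assumption each factor extends smoothly to $t=0$. The multinomial Leibniz rule combined with the continuity of each $T_j$ then yields a bound of the form
\[
\sup_{t \in [0,1]} \bigl|\partial_t^n (T \varphi)(t, \bm y)\bigr| \le \sum_{n_1 + \cdots + n_d = n} \binom{n}{n_1,\ldots,n_d} \prod_{j=1}^d p_{n_j}^{(j)}(\varphi_j),
\]
with continuous seminorms $p_{n_j}^{(j)}$ on $\mathcal S(\mathds R)$. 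The right-hand side is a continuous seminorm on $\mathcal S(\mathds R)^{\otimes d}$ endowed with the projective tensor topology.

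Next, I invoke \cref{lem:tensor-product}, which identifies the completion of $\mathcal S(\mathds R)^{\otimes d}$ in the projective topology with $\mathcal S(\mathds R^d)$. Since $C^\infty([0,1])$ is a Fréchet space, hence complete, the continuous linear map $\varphi \mapsto T\varphi$ defined on the algebraic tensor product extends uniquely to a continuous linear map $T \colon \mathcal S(\mathds R^d) \to C^\infty([0,1])$. To conclude, I verify that for every fixed $t > 0$ this extension still agrees with the distributional convolution $\varphi \mapsto \bigl(u \ast \varphi(\bm \cdot / t)\bigr)(\bm y)$: both sides are continuous linear functionals on $\mathcal S(\mathds R^d)$ that coincide on the dense subspace of elementary tensors, so they coincide throughout.

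The algebraic identity on elementary tensors is immediate once $\bm y$ is fixed; the main obstacle will be showing that the bound from the Leibniz step really is a continuous seminorm in the projective topology on $\mathcal S(\mathds R^d)$, and that smoothness at $t = 0$ — not merely continuity of some limit — survives the density extension. Both points are handled by the uniform bounds on all $\partial_t^n$ delivered by the Leibniz estimate, together with the completeness of $C^\infty([0,1])$, which transports smoothness from the dense subspace to the full Schwartz space.
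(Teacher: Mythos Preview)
Your argument is correct and follows essentially the same strategy as the paper: both proofs factor the convolution on elementary tensors, obtain continuity from the one-dimensional hypotheses, and then extend to all of $\mathcal S(\mathds R^d)$ via the density of the algebraic tensor product (Lemma~\ref{lem:tensor-product}) and completeness of the target.

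The one genuine difference is in the intermediate step. The paper introduces a \emph{vector} parameter $\bm t \in (0,1]^d$ and considers the auxiliary map $\tilde T\varphi(\bm t) = \bigl(u \ast \varphi(\bm\cdot/\bm t)\bigr)(\bm y)$, which on pure tensors factors as $\prod_j T_j\varphi_j(t_j)$; this is precisely the tensor product of the linear maps $T_j$, so continuity of its extension $\hat T : \mathcal S(\mathds R^d) \to C^\infty([0,1]^d)$ is automatic from the general theory of tensor products of continuous linear maps between nuclear spaces. The desired map $T$ is then recovered by composing with the diagonal restriction $C^\infty([0,1]^d) \to C^\infty([0,1])$, $g \mapsto g(t,\dots,t)$. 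You instead keep a scalar $t$ throughout and verify continuity by hand with the Leibniz rule, bounding each $\partial_t^n$-seminorm by a finite sum of projective cross-seminorms. The paper's route avoids this explicit calculation entirely, while yours is more self-contained and does not require invoking the abstract continuity of $\bigotimes_j T_j$. Both reach the same destination with comparable effort.
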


\begin{proof}
We first define the related mapping 
$\tilde T: \mathcal S(\mathds R^d)\to C^\infty((0,1]^d)$ with
\[
\big( \tilde T \varphi \big)(\bm t) = \big( u \ast \varphi(\bm \cdot / \bm t) \big)(\bm y),
\quad \varphi \in \mathcal S(\mathds R^d),~\bm t \in (0, 1]^d,
\]
interpreting division by the vector $\bm t$ element-wise.
Note that if $\varphi$ is the tensor product of one-dimensional Schwartz functions,
\[
\varphi = \bigotimes_{j = 1}^d \varphi_j, \quad \varphi_j \in \mathcal S(\mathds R),
\]
the mapping $\tilde T$ factorizes,
\[
\tilde T \varphi(\bm t)
= \prod_{j = 1}^d T_j \varphi_j(t_j),
\quad \bm t \in (0, 1]^d.
\]
In particular, the right hand side if well-defined for all $\bm t \in [0, 1]^d$.
Let $\hat T$ denote the unique continuous linear continuation of
\[
\bigotimes_{j = 1}^d T_j: \bigotimes_{j = 1}^d \mathcal S(\mathds R) \to \bigotimes_{j = 1}^d C^\infty([0, 1])
\]
to the completion of the algebraic tensor products, see \cref{lem:tensor-product},
\[
\hat T: \mathcal S(\mathds R^d) \to C^\infty([0, 1]^d).
\]
By construction it agrees with $\tilde T$ on pure tensors and $\bm t \in (0, 1]^d$,
\[
\hat T \bigotimes_{j = 1}^d \varphi(\bm t) = \tilde T \bigotimes_{j = 1}^d \varphi(\bm t).
\]
Since linear combinations of pure tensors are dense, $\hat T \varphi$ provides the unique and smooth extension of $\tilde T \varphi$
to $C^\infty([0, 1]^d)$.
To recover the formulation of \cref{lem:scaled-conv-smooth-continuous}, we need to change the image space of $\tilde T$ from $C^\infty([0, 1]^d)$
to $C^\infty([0, 1])$.
This is accomplished by concatenation with the continuous map
\[
\eta : C^\infty([0, 1]^d) \to C^\infty([0, 1]),
\]
with $(\eta g)(t) = g(t, \dots, t)$, $t \in [0, 1]$.
Then $T$ is given by
\[
T = \hat T \circ \eta,
\]
by construction a continuous linear mapping from $S(\mathds R^d)$ to $C^\infty([0, 1])$ with
\[
T \varphi(t) = [u \ast \varphi(\bm \cdot / t)](\bm y),
\]
for $t \in (0, 1]$.
\end{proof}

Finally, we combine the definition of $\psi_L$ with \cref{lem:formula_1_L}
to arrive at the following proof of \cref{thm:integral_representation}.

\begin{proof}[Proof of \cref{thm:integral_representation}]
Recall that $\psi_L$ is given by the formula
\begin{align*}
\psi_L(t,\bm y)=\big(\hat {\mathds 1}_{L}\ast e^{-\pi \bm \cdot^2/t^2}\big)(\bm y).
\end{align*}
Recall also that, by \cref{lem:formula_1_L}, for $L =A \Nz^{d}$ the
form factor reads
\begin{align*}
\hat{\mathds 1}_{L} =  \bigotimes_{j = 1}^{d} \Big(1 - e^{-2 \pi i \big((A^T\,\bm \cdot)_j - i 0^+\big)}\Big)^{-1},
\end{align*}
from which it follows that
\begin{align*}
\hat{\mathds 1}_{L-\bm x} = e^{2\pi i x \cdot (\bm \cdot)}
\bigotimes_{j = 1}^{d} \Big(1 - e^{-2 \pi i \big((A^T\,\bm \cdot)_j - i 0^+\big)}\Big)^{-1}.
\end{align*}
Thus,
\begin{alignat*}{2}
\psi_{L-\bm x}(t,\bm y) &= \int_{\R^d} \frac{e^{2\pi i x \cdot (\bm y-\bm \xi)} \cdot
e^{-\pi {\bm \xi}^2/t^2}}{\prod_{j=1}^d \bigl(1-e^{-2\pi i (A^T (\bm y - \bm \xi)_j
-i0^+)}\bigr) } \, \dd \bm \xi \\
&= e^{2\pi i \bm x \cdot \bm y}  
\int_{\R^d} \frac{e^{2\pi i \bm x \cdot \bm \xi} \cdot
e^{-\pi {\bm \xi}^2/t^2}}{\prod_{j=1}^d \bigl(1-e^{-2\pi i ((A^T \bm \xi)_j + (A^T \bm y)_j
-i0^+)}\bigr) } \, \dd \bm \xi \\
&= e^{2\pi i \bm x \cdot \bm y} \cdot  e^{-\pi t^2 {\bm x}^2}
\int_{\R^d} \frac{
e^{-\pi (\bm \xi - it^2 \bm x)^2/t^2}}{\prod_{j=1}^d \bigl(1-e^{-2\pi i ((A^T \bm \xi)_j + (A^T \bm \xi)_j
-i0^+)}\bigr) } \, \dd \bm \xi \\
&= e^{2\pi i \bm x \cdot \bm y} \cdot e^{-\pi t^2 {\bm x}^2}
\int_{\R^d - it^2 \bm x} \frac{
e^{-\pi {\bm \xi}^2/t^2}}{\prod_{j=1}^d \bigl(1-e^{-2\pi i ((A^T \bm \xi)_j - t\bm Y(t)_j
-i0^+)}\bigr) } \, \dd \bm \xi,
\end{alignat*}
where $\bm Y(t) = -A^T(\bm y/t + it \bm x)$. It follows that
\begin{alignat*}{2}
\psi_{L-\bm x}(t,\bm y) &= e^{2\pi i \bm x \cdot \bm y}  e^{-\pi t^2 {\bm x}^2}
\frac{1}{V_\Lambda} \int_{A^T(\R^d - it^2 \bm x)} \frac{
e^{-\pi (A^{-T}\bm \xi)^2/t^2}}{\prod_{j=1}^d \bigl(1-e^{-2\pi i ({\xi}_j - t\bm Y(t)_j
-i0^+)}\bigr) } \, \dd \bm \xi.
\end{alignat*}
By Cauchy's theorem, assuming that $\Im{\bm Y(t)_j} \ge 0$, we have that
\begin{alignat*}{2}
\psi_{L-\bm x}(t,\bm y) &= e^{2\pi i \bm x \cdot \bm y}  e^{-\pi t^2 {\bm x}^2}
\frac{1}{V_\Lambda}
\int_{\R^d} \frac{
e^{-\pi (A^{-T}\bm \xi)^2/t^2}}{\prod_{j=1}^d \bigl(1-e^{-2\pi i ({\xi}_j - t\bm Y(t)_j-i0^+)}\bigr) } \, \dd \bm \xi,
\end{alignat*}
which proves the asserted representation of the convolution.
\end{proof}

%%%%%%%%%%%%%%%%%%%%%%%%%%%%%%%%%%%%%%%%%%%%%%%%%%%%%%%%%%%%%%%%%%%%%%%%%%%%%
% Part 3: Numerical algorithm and benchmarks
%%%%%%%%%%%%%%%%%%%%%%%%%%%%%%%%%%%%%%%%%%%%%%%%%%%%%%%%%%%%%%%%%%%%%%%%%%%%%

\section{Numerical algorithm}
\label{sec:algorithm}

Let $L=A \Nz^d$, $\bm x\in \R^d \setminus L$, $\bm y\in \R^d\setminus L^*$,
and $\nu \in \C$.  In this section, we describe a numerical method for
evaluating the function
\begin{align*}
  Z_{L,\nu}\left\vert \begin{matrix}
      \bm x\\\bm y
    \end{matrix}\right\vert =\,\sideset{}{'}\sum_{\bm z \in L}
    \frac{e^{-2\pi i  \bm y \cdot \bm z} }{ {\vert   \bm z-\bm
    x\vert}^{\nu}},
\end{align*}
which is directly related to the set zeta function via \cref{rem:shift_by_x}.
We first observe that, by dividing the corner $L=A\Nz^d$ into subsets along the
planes $\{ \bm z \in \R^d : (A^{-1}\bm z)_j = (A^{-1}\bm x)_j \}$, $j=1,2,\ldots,d$,
we can represent 
this zeta function for $\bm x\in \R^d \setminus L$ by a 
combination of corner zeta functions, each corresponding to the case
when $\bm x\in A \mathds R_{\le
0}^{d}\setminus L$.
We then use \cref{rem:shift_by_x} together with \cref{thm:generalized-crandall},
\cref{cor:hadamard_int_rep}, and \cref{thm:integral_representation}
to perform our calculation.  Recall that the
sum over $\bm z$ appearing
in the formula for $Z_{L-\bm x,\nu}(\bm y)$ involves the summands $G_\nu(\bm
z/\lambda)$, which decay exponentially in~${\bm z}^2$. These summands
can be written in terms of the upper incomplete gamma function, for which
efficient and accurate numerical codes are available (see, for
example,~\cite{gil2012efficient}). The main difficulty is thus the
evaluation of the integral $I_{L-\bm x}(\bm y) = \ddashint_0^1 t^{\nu-d-1}
\psi_{L-\bm x}(t/\lambda,\bm y)\, \dd t$ and of the function $\psi_{L-\bm
x}$.  The numerical evaluation of this zeta function using these formulas
depends on resolution of the following three problems:
\begin{enumerate}

\item The choice of the Riemann splitting parameter~$\lambda$;

\item The evaluation of the function $\psi_{L-\bm x}$;

\item The evaluation of the singular Hadamard finite-part integral $I_{L-\bm
x}(\bm y)$.

\end{enumerate}

\subsection{Choice of the Riemann splitting parameter~$\lambda$}
\label{sec:splitting}

The Riemann splitting parameter $\lambda$ controls how much of the set zeta
function is computed in the sum appearing in the representation given by
\cref{thm:generalized-crandall}, and how much of it is computed in the
integral $I_{L-\bm x}(\bm y)$.  A smaller value of $\lambda$ means a smaller
number of terms in the sum, as well as a larger domain of integration in the
integral $I_{L-\bm x}(\bm y)$, and vice versa.

First, recall that
\begin{align*}
G_\nu(\bm z) = \frac{\Gamma(\nu/2,\pi \bm z^2)}{(\pi \bm z^2)^{\nu/2}} \sim
\frac{e^{-\pi \bm z^2}}{\pi \bm z^2} \qquad \text{as $\vert \bm z \vert \to \infty$}.
\end{align*}
Thus, for each $\varepsilon > 0$, $G_\nu(\bm z/\lambda) \ge \varepsilon$
whenever
\begin{align*}
\vert \bm z\vert \lesssim \lambda \sqrt{\frac{\log(1/\varepsilon)}{\pi}}.
\end{align*}
Letting $\sigma_1 \ge \sigma_2 \ge \cdots \ge \sigma_d > 0$ denote the eigenvalues
of $A^T A$, we see that the sum over $\bm z\in A[-m,m]^d \cap (L-\bm x)$,
where 
\begin{align*}
m = \frac{\lambda}{\sqrt{\sigma_d}} \sqrt{\frac{\log(1/\varepsilon)}{\pi}},
\end{align*}
contains all $\bm z\in L-\bm x$ for which $G_\nu(\bm z/\lambda) \gtrsim
\varepsilon$.

In order to simplify the evaluation of the function $\psi_{L-\bm x}$,
we require the support of the Gaussian
$e^{-\pi (A^{-T} \bm \xi)^2}$ to contain, to precision $\varepsilon$, at
most a single element of the set $\prod_{j=1}^d(\Re{\bm Y(t)_j}+\Z/t)$. Since
$\Re{\bm Y(t)_j}$ is arbitrary, this means we require that that $e^{-\pi
(A^{-T} \bm \xi)^2} \le \varepsilon$ for all $\bm \xi \in \R^d \setminus
[-h/(2t),h/(2t)]^d$, for some $0 < h < 1$. 
It is easy to see that this holds whenever
\begin{align*}
\lambda \ge \frac{2\sqrt{\sigma_1}}{h} \sqrt{\frac{\log(1/\varepsilon)}{\pi}}.
\end{align*}
In order to ensure that the support of the Gaussian is well-separated from
all other elements of $\prod_{j=1}^d(\Re{\bm Y(t)_j}+\Z/t)$ besides the
single element inside the cell $[-1/(2t),1/(2t))^d$, we choose $h=1/2$. 
Note that, if we set $\lambda$ equal to this lower bound, then the total
number of elements in the set $A[-m,m]^d$ grows as
$O((\sigma_1/\sigma_d)^{d/2}) = O(\kappa(A)^d)$, where $\kappa(A)$ is the
condition number of $A$.

In order to minimize the cost of evaluating the sum appearing in the
representation of the zeta function, we would like to make $\lambda$ as
small as possible, which means setting it equal to this lower bound.  There
is, however, a situation in which $\lambda$ can be made larger, without
incurring any extra cost in the evaluation of the sum.
Recall that $\bm x \in A \R_{\le 0}^{d} \setminus L$, and
suppose that 
\begin{align*}
\dist(\bm x,\conv(L)) \ge \lambda \sqrt{\frac{\log(1/\varepsilon)}{\pi}},
\end{align*}
where $\conv(L) \subseteq \R^d$ denotes the convex hull of the set $L$.
In this case, there will be no terms of size larger than $\varepsilon$
appearing in the sum.  Equivalently, the sum will contain no terms larger
than $\varepsilon$, whenever
\begin{align*}
\lambda \le \dist(\bm x,\conv(L)) \sqrt{ \frac{\pi}{\log(1/\varepsilon)} }.
\end{align*}
In order to avoid computing the distance between $\bm x$ and $\conv(L)$, we
use the fact that $\sigma_n (A^{-1}\bm x)^2 \le \dist(\bm x,\conv(L))^2$ to
arrive at the looser but simpler bound
\begin{align*}
\lambda \le \sqrt{ \frac{\pi \sigma_n}{\log(1/\varepsilon)}\cdot (A^{-1}\bm x)^2}
\le \dist(\bm x,\conv(L)) \sqrt{ \frac{\pi}{\log(1/\varepsilon)} }.
\end{align*}
Since we would like to make $\lambda$ as small as possible in order to
reduce the number of terms in the sum, and since we would like $\lambda$ to
be large when the sum is empty in order to reduce the domain of $I_{L-\bm
x}(\bm y)$, we set
\begin{align*}
\lambda = 
\max\Biggl(
\frac{2}{h} \sqrt{\frac{\sigma_1 \log(1/\varepsilon)}{\pi}},
\sqrt{ \frac{\pi \sigma_d}{\log(1/\varepsilon)}\cdot (A^{-1}\bm x)^2}\Biggr).
    \label{eq:lambdafor}
\end{align*}
%

% \subsection{Evaluation of the function $\psi_{L-\bm x}$}
% \subsection{Evaluation of the function}
\subsection{Evaluation of the function
\texorpdfstring{$\psi_{L-\bm{x}}$}{$\psi_{xL-x}$}}
\label{sec:generrfuneval}

The representation for $\psi_{L-\bm x}$ 
provided  in \cref{thm:integral_representation} can be
understood in terms of a generalization of the classical Faddeeva
$w$ function
\begin{align*}
w(z) = \frac{1}{\pi i} \int_\R \frac{e^{-t^2}}{t-z} \, dt, \qquad \text{$\Im{z} > 0$},
\end{align*}
where the definition is extended to all $z\in \C$ by analytic continuation.
Note that $\erfc(x) = e^{-x^2} w(ix)$, where
\begin{align*}
\erfc(x) = 1-\frac{2}{\sqrt{\pi}} \int_0^x e^{-t^2}\, dt, \qquad \text{$x \in \C$}.
\end{align*}

\begin{definition}[Generalized Faddeeva function]
  \label{def:genfadd}
Suppose that $u \in \mathcal D'(\R^d)$ and that $G \in \R^{d \times d}$ is a positive
definite matrix. The generalized multidimensional Faddeeva function $w
\colon \C^d \to \C$ is defined by the formula
\begin{align*}
w(\bm z; u, G) = \langle \tau_{\bm z} u(\bm \xi), e^{-\bm \xi^T G^{-1} \bm \xi} 
\rangle, \qquad \text{$\bm z \in \C^d$},
\end{align*}
with the translation operator $\tau_{\bm z}$ from \cref{def:translat}.

\end{definition} 

Note that $w(z; \frac{1}{\pi i (\cdot-i0^+)}, 1) = w(z)$. The following theorem
expresses the function $\psi_{L-\bm x}$ in terms of the generalized Faddeeva
function, and follows immediately from \cref{thm:integral_representation}.

\begin{theorem}
\label{thm:psi_Lmx_faddeeva}
Let $L=A\Nz^d$ and suppose that $\bm x \in \R^d$ and $\bm y\in \R^d$. Then, for
all $t>0$, 
\begin{align*}
\psi_{L-\bm x}(t,\bm y) = 
e^{2\pi i \bm x \cdot \bm y}  e^{-\pi t^2 {\bm x}^2}
\frac{1}{V_\Lambda}
w(\bm Y(t); \otimes_{j=1}^d t(1-e^{-2\pi it(\cdot-i0^+)})^{-1}, A^T A/\pi),
\end{align*}
where
\begin{align*}
\bm Y(t)= -A^T (\bm y/t+i t \bm x).
\end{align*}

\end{theorem}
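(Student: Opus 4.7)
The approach is to start from the convolution definition $\psi_{L-\bm x}(t,\bm y) = (\hat{\mathds 1}_{L-\bm x} \ast e^{-\pi \bm\cdot^2/t^2})(\bm y)$, insert the explicit tensor-product expression for the form factor supplied by \cref{lem:formula_1_L}, and then transform the resulting oscillatory integral into the non-oscillatory form of the theorem via completing the square in the Gaussian and a complex contour deformation.

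Concretely, using $\hat{\mathds 1}_{L-\bm x} = e^{2\pi i \bm x \cdot (\bm\cdot)}\hat{\mathds 1}_{A\Nz^d}$, I would first write
\begin{align*}
\psi_{L-\bm x}(t,\bm y) = \int_{\R^d} \frac{e^{2\pi i \bm x \cdot (\bm y - \bm\xi)}\, e^{-\pi\bm\xi^2/t^2}}{\prod_{j=1}^d \bigl(1 - e^{-2\pi i ((A^T(\bm y - \bm\xi))_j - i 0^+)}\bigr)} \, \dd \bm\xi.
\end{align*}
I would then factor $e^{2\pi i \bm x \cdot \bm y}$ out of the integral, absorb the remaining linear phase into the Gaussian by completing the square (producing the prefactor $e^{-\pi t^2 \bm x^2}$ and a Gaussian centred on the imaginary axis), apply the substitution $\bm\xi \mapsto A^{-T}\bm\xi$ (Jacobian $1/V_\Lambda$, which also turns the Gaussian into $e^{-\pi(A^{-T}\bm\xi)^2/t^2}$), and finish with the rescaling $\bm\xi \mapsto t\bm\xi$ (Jacobian $t^d$). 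The denominator is then naturally expressed in terms of $\bm Y(t) = -A^T(\bm y/t + it\bm x)$.

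After these algebraic steps the integration runs over a shifted contour of the form $\R^d + i\bm c$, and the final step is to apply Cauchy's theorem to deform it back to $\R^d$. The denominator has simple poles at $\xi_j \in \bm Y(t)_j + \Z/t$; under the hypothesis $\bm x \in A^{-T}\R_{<0}^d$ one has $\Im{\bm Y(t)_j} = -t(A^T\bm x)_j > 0$, which places every pole strictly in the upper half $\xi_j$-plane. Combined with the Gaussian decay of $e^{-\pi(A^{-T}\bm\xi)^2}$ in the real directions, this makes the enclosed strip pole-free and lets the horizontal tails vanish, yielding the asserted formula.

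The main obstacle I expect is this contour-deformation step: one must simultaneously keep track of the locations of the poles of the tensored denominator relative to the straightened contour and justify the vanishing of the horizontal tails uniformly in $\bm x$ and $t$, since the denominator is only conditionally bounded as $|\Im{\xi_j}|$ grows. For the analytic continuation to arbitrary $\bm x \in \R^d$ claimed in the last sentence, I would argue that once the contour lies on $\R^d$ the integrand depends analytically on $\bm x$ through the explicit prefactor and through $\bm Y(t)$, with the $i 0^+$ prescription handling the degenerate case where the pole line touches the real axis; a Morera argument together with the uniform integrable majorant $e^{-\pi(A^{-T}\bm\xi)^2}$ on compact $\bm x$-subsets then closes that step.
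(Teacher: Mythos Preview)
Your proposal is correct and follows essentially the same route as the paper. The paper states that \cref{thm:psi_Lmx_faddeeva} ``follows immediately from \cref{thm:integral_representation}'' (it is just a rewriting of that integral in the Faddeeva notation of \cref{def:genfadd}), and the proof of \cref{thm:integral_representation} itself proceeds exactly as you outline: insert the form factor from \cref{lem:formula_1_L}, factor out $e^{2\pi i\bm x\cdot\bm y}$, complete the square to extract $e^{-\pi t^2\bm x^2}$, change variables by $A^T$ (Jacobian $1/V_\Lambda$), and then apply Cauchy's theorem under the assumption $\Im{\bm Y(t)_j}\ge 0$ (equivalently $\bm x\in A^{-T}\R_{<0}^d$) to return the contour to $\R^d$, with analytic continuation in $\bm x$ afterwards.
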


In the remainder of this section, we describe a numerical algorithm for the
accurate and efficient evaluation of the generalized Faddeeva function
\begin{align*}
w(\bm z; \otimes_{j=1}^d f({\cdot-i0^+}), G),
\end{align*}
where $\bm z\in \C^d$, $G\in \R^{d\times d}$ is a positive definite matrix,
and $f\colon \C\setminus (\Z/t) \to \C$ is a meromorphic function with poles
at the elements of $\Z/t$, each with a residue of $(2\pi i)^{-1}$, satisfying
the identity $f(z\pm 1/t) = f(z)$.  We assume
that the support of the Gaussian $e^{-\bm \xi^T G^{-1} \bm \xi}$ is, to
precision $\varepsilon$, contained in the set $[-1/(4t),1/(4t)]^d$. 

We proceed by forming a Cholesky decomposition $G=\cL\cL^T$ of the matrix $G$,
where $\cL$ is the lower triangular Cholesky factor.  Such a decomposition
always exists, since the matrix $G$ is positive definite.  Let $U=\cL^{-T}$,
so that $G^{-1}=\cL^{-T}\cL^{-1} = UU^T$. We then define $\bm\eta(\bm \xi) =
U^T \bm\xi$, and observe that $\eta_{j}(\bm \xi)$ only depends on $\xi_{1},
\xi_{2}, \ldots, \xi_{j}$. This allows us to write down the following
recursive formula.  Let
\begin{align*}
\phi(\bm z) = w(\bm z; \otimes_{j=1}^d f({\cdot -i0^+}), G),
\end{align*}
for $\bm z \in \C^d$, and let the functions $\phi^{(k)}\colon \C^d \to \C$,
$0\le k\le d-1$, be given by the formula
\begin{align*}
\phi^{(k)}(\bm z^{(k)}) = \int_\R e^{-\eta_{k+1}(\bm z^{(k+1)})^2}
f(\xi_{k+1} - z_{k+1} - i0^+) \phi^{(k+1)}(\bm z^{(k+1)}) \, \dd \xi_{k+1},
\end{align*}
when $\Im{z_{k+1}} \ge 0$, and let
\begin{align*}
\begin{split}
&\phi^{(k)}(\bm z^{(k)}) = 
\sum_{n=-\infty}^\infty e^{-\eta_{k+1}(\bm z^{(k)}+(n/t){\bm e}_{k+1})^2}
\phi^{(k+1)}(\bm z^{(k)}+(n/t){\bm e}_{k+1}) \\ 
&\hspace*{4.5em} +
\int_\R e^{-\eta_{k+1}(\bm z^{(k+1)})^2}
f(\xi_{k+1} - z_{k+1} -i0^-) \phi^{(k+1)}(\bm z^{(k+1)}) \, \dd \xi_{k+1},
\end{split}
\end{align*}
when $\Im{z_{k+1}} < 0$,
where $\bm z^{(k)} = (\xi_1,\xi_2,\ldots,\xi_k,z_{k+1},z_{k+2},\ldots,z_d)$, 
$\bm z\in \C^d$, $\bm e_{k+1}$ is the $(k+1)$th unit vector, and
$\phi^{(d)}(\bm \xi) = 1$.  We then have that $\phi^{(0)}(\bm z) = \phi(\bm
z)$.

Since $\phi(\bm z)$ can become large when $\Im{z_j} < 0$ for some $1\le j\le
d$, we instead compute a scaled version $\tilde \phi(\bm z) =
e^{-\beta} \phi(\bm z)$, where $\beta$ is defined as follows. Let $\bm{\bar
z}$ be given by the formula $\bar z_j = \min(\Im{z_j},0)$ for $1\le j\le
d$.  We then define $\beta = \sum_{j=1}^n \eta_j(\bm{\bar z})^2$.  It is
possible to show from \cref{def:genfadd} that $\tilde\phi(\bm z) \lesssim
1$. We now observe that that $\tilde \phi(\bm z)$ satisfies the following
recursive formula.  Let the functions $\tilde \phi^{(k)}\colon \C^d \to \C$, $0\le k\le d-1$, be given by the formula
\begin{align*}
\begin{split}
&{\tilde\phi}^{(k)}(\bm z^{(k)}) = \int_\R e^{-\eta_{k+1}(\bm
z^{(k+1)})^2-\eta_{k+1}(\bm{\bar z})^2} \\
&\hspace*{9.5em}
f(\xi_{k+1} - z_{k+1} - i0^+) {\tilde\phi}^{(k+1)}(\bm z^{(k+1)}) \, \dd \xi_{k+1},
\end{split}
\end{align*}
when $\Im{z_{k+1}} \ge 0$, and let
\begin{align*}
\begin{split}
&{\tilde\phi}^{(k)}(\bm z^{(k)}) = 
\sum_{n=-\infty}^\infty e^{-\eta_{k+1}(\bm z^{(k)}+(n/t){\bm e}_{k+1})^2
-\eta_{k+1}(\bm{\bar z})^2}
{\tilde\phi}^{(k+1)}(\bm z^{(k)}+(n/t)\bm{e}_{k+1}) \\ 
&\hspace*{4.5em} +
\int_\R e^{-\eta_{k+1}(\bm z^{(k+1)})^2-\eta_{k+1}(\bm{\bar z})^2} \\
&\hspace*{9.5em} 
f(\xi_{k+1} - z_{k+1} -i0^-) {\tilde\phi}^{(k+1)}(\bm z^{(k+1)}) \, \dd \xi_{k+1},
\end{split}
\end{align*}
when $\Im{z_{k+1}} < 0$, where $\bm z^{(k)} =
(\xi_1,\xi_2,\ldots,\xi_k,z_{k+1},z_{k+2},\ldots,z_d)$, $\bm z \in \C^d$, 
$\bm e_{k+1}$ is the
$(k+1)$th unit vector, and $\tilde\phi^{(d)}(\bm \xi) = 1$.  It then
holds that $\tilde\phi^{(0)}(\bm z) = \tilde\phi(\bm z)$. 

This recursive formula can be further simplified to eliminate the
infinite sum. Let $\bm{\tilde z}$ denote the single point in the set $(\bm
z+\Z^d/t) \cap \{ z\in \C : \Re{tz} \in [-1/2,1/2)\}^d$. Clearly,
$\phi(\bm{\tilde z}) = \phi(\bm z)$.  Consider now the infinite sum
\begin{align*}
\sum_{n=-\infty}^\infty e^{-\eta_{k+1}(\bm z^{(k)}+(n/t){\bm e}_{k+1})^2
-\eta_{k+1}(\bm{\bar z})^2}
\end{align*}
which appears in this recursive formula in the case $\Im{z_{k+1}} < 0$.  If
we assume that $\eta_{k+1}(\Im{{\bm z}^{(k)}})^2 \lesssim
\eta_{k+1}(\bm{\bar z})^2$, for $0\le k\le d-1$, then we observe that
\begin{align*}
\babs{e^{-\eta_{k+1}(\bm z^{(k)}+(n/t){\bm e}_{k+1})^2
-\eta_{k+1}(\bm{\bar z})^2}}
\lesssim \babs{e^{-\eta_{k+1}(\Re{\bm z^{(k)}}+(n/t){\bm
e}_{k+1})^2 }},
\end{align*}
for $n\in \Z$. Recalling that the support of $e^{-\bm \xi^T G^{-1} \bm \xi}$
is contained in the set $[-1/(4t),1/(4t)]^d$, to precision $\varepsilon$, we
see that
\begin{align*}
\babs{e^{-\sum_{j=1}^{k+1} \eta_{j}(\Re{\bm{\tilde z}^{(k)}}+(n/t){\bm
e}_{k+1})^2 }} \lesssim \varepsilon
\end{align*}
for all $n\in \Z\setminus \{0\}$,
where $\bm{\tilde z}^{(k)} = (\xi_1,\xi_2,\ldots,\xi_k,\tilde z_{k+1},
\tilde z_{k+2},\ldots,\tilde z_d)$. This means that the final contribution
of all terms in the infinite sum, besides the one corresponding to
$\bm{\tilde z}^{(k)}$, to the value of $\tilde\phi^{(0)}(\bm{z})$, is
less than $\varepsilon$ in size. Since our goal is to compute
$\tilde\phi^{(0)}(\bm{z})$ to precision $\varepsilon$, this means that
these terms can be dropped from the sum. In other words,
\begin{align*}
\sum_{n=-\infty}^\infty e^{-\eta_{k+1}(\bm z^{(k)}+(n/t){\bm e}_{k+1})^2
-\eta_{k+1}(\bm{\bar z})^2} \approx
e^{-\eta_{k+1}(\bm{\tilde z}^{(k)})^2
-\eta_{k+1}(\bm{\bar z})^2},
\end{align*}
in the sense that the neglected terms have a contribution to the 
value $\tilde\phi^{(0)}(\bm{z})$ of less than $\varepsilon$. 
If we
now let the functions $\tilde
\phi^{(k)}\colon \C^d\to \C$, $0\le k\le d-1$, be given by
the formula
\begin{align*}
\begin{split}
&{\tilde\phi}^{(k)}(\bm{\tilde z}^{(k)}) = \int_\R e^{-\eta_{k+1}(\bm{
\tilde z}^{(k+1)})^2-\eta_{k+1}(\bm{\bar z})^2} \\
&\hspace*{9.5em}
f(\xi_{k+1} - \tilde z_{k+1} - i0^+) {\tilde\phi}^{(k+1)}(\bm z^{(k+1)}) \, \dd \xi_{k+1},
\end{split}
\end{align*}
when $\Im{\tilde z_{k+1}} \ge 0$, and let
\begin{align*}
\begin{split}
&{\tilde\phi}^{(k)}(\bm{\tilde z}^{(k)}) = 
e^{-\eta_{k+1}(\bm{\tilde z}^{(k)})^2
-\eta_{k+1}(\bm{\bar z})^2}
{\tilde\phi}^{(k+1)}(\bm{\tilde z}^{(k)}) \\ 
&\hspace*{4.5em} +
\int_\R e^{-\eta_{k+1}(\bm{\tilde z}^{(k+1)})^2-\eta_{k+1}(\bm{\bar z})^2} \\
&\hspace*{9.5em} 
f(\xi_{k+1} - \tilde z_{k+1} -i0^-) {\tilde\phi}^{(k+1)}(\bm{\tilde z}^{(k+1)}) \,
\dd \xi_{k+1},
\end{split}
\end{align*}
when $\Im{\tilde z_{k+1}} < 0$, 
where $\bm{\tilde z}^{(k)} = (\xi_1,\xi_2,\ldots,\xi_k,\tilde z_{k+1},
\tilde z_{k+2},\ldots,\tilde z_d)$, $\bm{z}\in \C^d$,
and $\tilde\phi^{(d)}(\bm \xi) = 1$, then we have that
$\tilde\phi^{(0)}(\bm{\tilde z}) \approx \tilde\phi(\bm z)$ to precision
$\varepsilon$.

One remaining issue with this formula is that, if any of the terms
$\xi_1,\xi_2,\ldots,\xi_k$ appearing in the vector $\bm{\tilde z}^{(k)} =
(\xi_1,\xi_2,\ldots,\xi_k,\tilde z_{k+1}, \tilde z_{k+2},\ldots,\tilde z_d)$
have a large imaginary part, then the exponential term in the integrand will
be highly oscillatory.  We would like to perform a change of variables on
$\xi_{k+1}, \xi_{k+2},\ldots, \xi_d$ which eliminates the imaginary parts of
$\xi_1,\xi_2,\ldots,\xi_k$, while keeping the values of $\eta_j(\bm{\xi})$
unchanged for $j=k+1,k+2,\ldots,d$.
First, we observe that $UU^T=G^{-1}$, so $U^T=U^{-1}G^{-1}$. Recall
that $U^{-1}$ is upper triangular. If we let $g_1,g_2,\ldots,g_d$ denote the
columns of $G$, then it is easy to see that $(U^T g_\ell)_j =
(U^{-1}G^{-1}g_\ell)_j = 0$ for all $1\le \ell< j\le d$. This means that
$\eta_j(\bm{\xi} + \alpha g_\ell) =\eta_j(\bm{\xi})$ for all $1\le \ell
<j\le d$ and $\alpha \in \R$. Let
\begin{align*}
\Delta_k(\bm\xi) = -i(g_1|g_2|\cdots|g_k) G_k^{-1}
(\Im{\xi_1},\Im{\xi_2},\ldots,\Im{\xi_k})^T,
\end{align*}
where $G_k$ is the leading principal minor of $G$ of order $k$. Since $G$ is
positive definite, Sylvester's criterion tells us $G_k$ has a positive determinant
for each $1\le k\le d$, so $\Delta_k(\bm\xi)$ is well-defined.
Then, $\eta_j(\bm\xi + \Delta_k(\bm{\xi})) = 
\eta_j(\bm{\xi})$ for $j=k+1,k+2,\ldots,d$ and $\Im{(\bm\xi +
\Delta_k(\bm{\xi}))_j} = 0$ for $j=1,2,\ldots,k$.
Making the change of variables $\xi_j'=\xi_j+\Delta_k(\bm{\tilde
z}^{(k)})_j$ for $j=k+1,k+2,\ldots,d$, we have that
$\tilde\phi^{(k)}(\bm{z} + \Delta_k(\bm{z}))
=\tilde\phi^{(k)}(\bm{z})$, for any $\bm z\in\C^d$. 

This leads us to the final form of the recursive formula.
Let the functions $\tilde\phi^{(k)}\colon \C^d \to\C$, $0\le k\le d-1$,
be given by the formula
\begin{align*}
\begin{split}
&{\tilde\phi}^{(k)}(\bm{\tilde z}^{(k)}) = \int_\R e^{-\eta_{k+1}(\bm{
\tilde z}^{(k+1)})^2-\eta_{k+1}(\bm{\bar z})^2} \\
&\hspace*{9.5em}
f(\xi_{k+1} - \tilde z_{k+1} - i0^+) {\tilde\phi}^{(k+1)}(\bm{\tilde z}^{(k+1)}) \,
\dd \xi_{k+1},
\end{split}
\end{align*}
when $\Im{\tilde z_{k+1}} \ge 0$, and let
\begin{align*}
\begin{split}
&{\tilde\phi}^{(k)}(\bm{\tilde z}^{(k)}) = 
e^{-\eta_{k+1}(\bm{\tilde z}^{(k)})^2
-\eta_{k+1}(\bm{\bar z})^2}
{\tilde\phi}^{(k+1)}(\bm{\tilde z}^{(k)} + \Delta_{k+1}(\bm{\tilde z}^{(k)})) \\ 
&\hspace*{4.5em} +
\int_\R e^{-\eta_{k+1}(\bm{\tilde z}^{(k+1)})^2-\eta_{k+1}(\bm{\bar z})^2} \\
&\hspace*{9.5em} 
f(\xi_{k+1} - \tilde z_{k+1} -i0^-) {\tilde\phi}^{(k+1)}(\bm{\tilde z}^{(k+1)}) \,
\dd \xi_{k+1},
\end{split}
\end{align*}
when $\Im{z_{k+1}} < 0$, where $\bm{\tilde z}^{(k)} =
(\xi_1,\xi_2,\ldots,\xi_k,\tilde z_{k+1}, \tilde z_{k+2},\ldots,\tilde
z_d)$, $\bm{z}\in \C^d$, and $\tilde\phi^{(d)}(\bm \xi) = 1$.  Then
$\tilde\phi^{(0)}(\bm{\tilde z}) \approx \tilde\phi(\bm z)$ to precision
$\varepsilon$.

In order to evaluate this recursive formula, it is necessary for us to
compute the integrals
\begin{align*}
I_j = \int_\R e^{-\eta_{j}(\bm{\tilde z}^{(j)})^2
-\eta_{j}(\bm{\bar z})^2} f(\xi_{j} - \tilde z_{j} - i0^\pm)
\tilde \phi^{(j)}(\bm{\tilde z}^{(j)}) \, \dd
\xi_{j},
\end{align*}
where $f(z)=t(1-e^{-2\pi itz})^{-1}$,
for $1 \le j\le d$. Recall that $\bm\eta(\bm \xi)=U^T\bm \xi$, where 
$U$ is an upper triangular matrix such that
$G^{-1}=UU^T$.
To evaluate $I_j$, we first let $\alpha_j \in \R$ and $\beta_j \in
\C$ be defined by
\begin{align*}
\alpha_j = U_{jj} \qquad \text{and} \qquad \beta_j = \sum_{k=1}^{j-1} U_{kj}
\xi_k,
\end{align*}
so that $\eta_j(\bm{\tilde z}^{(j)}) = \alpha_j \xi_j + \beta_j$, and
let $w_j = \eta_j(\bm{\bar z}^{(j)})$. We evaluate
$I_j$ to a precision of $\varepsilon$. Letting $a_j = (-\beta_j -
\sqrt{\log(1/\varepsilon)})/\abs{\alpha_j}$ and $b_j = (-\beta_j +
\sqrt{\log(1/\varepsilon)})/\abs{\alpha_j}$, we thus truncate
the domain of integration from $\R$ to $[a_j,b_j]$. When $\tilde z_j$ is
well-separated from $\R$, the function $f(\xi_j - \tilde z_j)$ is 
smooth. Since we know that
$\tilde \phi^{(j)}(\xi_{k+1},\xi_{k+2},\ldots,\xi_{j-1},\xi_j,\allowbreak \tilde
z_{j+1},\ldots, \tilde z_d)$ is also a smooth
function of $\xi_j$, we can evaluate the integral using a standard adaptive
Gauss-Legendre quadrature scheme, like the one described in~\cite{chen2022adaptive}. 

Note however that, while $\tilde\phi^{(j)}$ is smooth, it has, relative
to the scale of $[a_j,b_j]$, features on the scale of $\vert U_{jj}/U_{jk}
\vert$ for $k=j+1,j+2,\ldots,d$, so that the finest feature is on the scale
$\min_{k>j} \vert U_{jj}/ U_{jk} \vert$. From~\cite{lemeire1975bounds}, we
know that $\min_j \vert U_{jj} \vert / \max_{i,j} \vert U_{ij} \vert \ge
1/\kappa(U)$, where $\kappa(U)$ denotes the condition number of $U$. Since
$\kappa(U)=\sqrt{\sigma_1/\sigma_d}$, we have then that $\min_{k>j} \vert
U_{jj}/ U_{jk} \vert \ge \sqrt{\sigma_d/\sigma_1}$. Therefore, the cost of
the adaptive Gauss-Legendre quadrature to evaluate $I_j$ will grow in
proportion to $\log(\sigma_1/\sigma_d)$.

When $\tilde z_j$ is close to $\R$ and $\Re{\tilde z_j} \in [a_j,b_j]$, the
integral is nearly singular and the cost of adaptive Gauss-Legendre
quadrature is prohibitive. Instead, we proceed by first dividing the
interval $[a_j,b_j]$ into the three subintervals $[a_j,\Re{\tilde
z_j}-\delta_j]$, $[\Re{\tilde z_j}-\delta_j,\Re{\tilde z_j}+\delta_j]$, and
$[\Re{\tilde z_j}-\delta_j,b_j]$, where $\delta_j > 0$. The parameter
$\delta_j$ is chosen
so that the function 
\begin{align*}
\psi_j(\xi_j) = e^{(\alpha_j\xi_j+\beta_j)^2-w_j^2}f(\xi_j - \tilde z_j)(\xi_j -
\tilde z_j)
\tilde\phi^{(j)}(\xi_{k+1},\xi_{k+2},\ldots,\xi_{j-1},\xi_j,\tilde
z_{j+1},\ldots,\tilde z_d),
\end{align*}
which is analytic in a neighborhood of $\Re{\tilde z_j}$, can be
approximated to precision $\varepsilon$ on the interval 
$[\Re{\tilde z_j}-\delta_j,\Re{\tilde z_j}+\delta_j]$
by a $N$th-order Chebyshev interpolant, with, for example, $N=20$. 
Since the function $\psi_j$ has features on the scale of 
$\min_{k>j} \vert U_{jj}/ U_{jk} \vert$, $\delta_j$ must be chosen in proportion
to these features, as, for example, $\delta_j = (1/6)\cdot (b_j-a_j)\min_{k>j}
\vert U_{jj}/ U_{jk} \vert$.
As discussed in~\cite{shen2022polynomial}, the fact that $\psi_j$ is
accurately approximated by its $N$th order Chebyshev interpolant 
means that, in practice, $\psi_j$ can be approximated uniformly to precision
$\varepsilon$ by a monomial expansion, where the coefficients of the monomial
expansion are computed by solving a Vandermonde system using a backward
stable solver. Once the approximation
\begin{align*}
\sum_{n=0}^N a_n (\xi_j - \Re{\tilde z_j})^n \approx \psi_j(\xi_j),
\end{align*}
has been computed, the integral
\begin{align*}
\int_{\Re{\tilde z_j}-\delta_j}^{\Re{\tilde z_j}+\delta_j}
\frac{\psi_j(\xi_j)}{\xi_j - \tilde z_j - i0^\pm} \, d\xi_j
\end{align*}
is evaluated from the moments
\begin{align*}
\int_{\Re{\tilde z_j}-\delta_j}^{\Re{\tilde z_j}+\delta_j}
\frac{(\xi_j - \Re{\tilde z_j})^n}{\xi_j - \tilde z_j - i0^\pm} \, d\xi_j,
\end{align*}
which are computed analytically using well-known recurrence relations (see,
for example~\cite{helsing2008evaluation}). When $\Re{\tilde z_j}=0$,
the correct branch corresponding to either $i0^+$ or $i0^-$ is selected 
by rotating the branch cut analytically. The integrals over
the intervals $[a_j,\Re{\tilde z_j}-\delta_j]$ and $[\Re{\tilde
z_j}-\delta_j,b_j]$, when these intervals are not empty, are evaluated
quickly and accurately by adaptive Gauss-Legendre quadrature, since away
from $\Re{\tilde z_j}$ the integrand is smooth.
Finally, we observe that, when $\min_{k>j} \vert U_{jj}/ U_{jk} \vert \le
1$, no fine-scale features are present, and adaptive Gauss-Legendre
quadrature can be replaced with quadratures of fixed orders for greater
efficiency.

% The following technique can be used to accelerate the evaluation of $I_d$.
We observe that the integrand appearing
in the formula for $I_d$ involves only the function $f$, and so has no
possibility of fine-scale features.  This fact, combined with the
availability of highly optimized numerical codes for evaluating the
classical Faddeeva $w$ function, can be leveraged to dramatically accelerate
the evaluation of $I_d$. Recalling that $f\colon \C\setminus
(\Z/t) \to \C$ is meromorphic with residue $(2\pi i)^{-1}$ at the elements of
$\Z/t$, we define the function
\begin{align*}
\Phi_d(\xi_d) = f(\xi_d - \tilde z_d) - \frac{1}{2\pi i}\sum_{j=-n}^n
\frac{1}{\xi_d - \tilde z_d - j/t},
\end{align*}
which is analytic on the set $\C \setminus (\{ j\in \Z : \vert j \vert > n
\}/t)$. Next, we approximate $\Phi_d$ on the interval $[a_d,b_d]$ by using
Cauchy's theorem combined with the trapezoidal rule. Letting $r =
b_d-a_d$, we set $\zeta_j = (a_d+b_d)/2 + re^{(2\pi j/m + \delta)i}$ and
$\delta\zeta_j =(2\pi i/m)r e^{(2\pi j/m + \delta)i}$, for $j=1,2,\ldots,m$,
where $\delta \in \R$ is chosen so that none of the points $\zeta_j$
coincide with $\tilde z_j$. We then have that
\begin{align*}
\Phi_d(\xi_d) \approx \sum_{j=1}^m
\frac{\Phi_d(\zeta_j)}{\xi_d-\zeta_j}\delta\zeta_j.
\end{align*}
Combining these two formulas,
\begin{align*}
\begin{split}
I_d 
&\approx \sum_{j=1}^m  \Phi_d(\zeta_j) \delta\zeta_j \int_\R
\frac{e^{-\eta_d(\bm{\tilde z}^{(d)})^2 - \eta_d(\bm{\bar z})^2}}{\xi_d
- \zeta_j} \, d\xi_d \\
&\hspace*{6em} + \frac{1}{2\pi i}\sum_{j=-n}^n \int_\R
  \frac{e^{-\eta_d(\bm{\tilde z}^{(d)})^2 -
\eta_d(\bm{\bar z})^2}}{\xi_d - \tilde z_d - j/t} \, d\xi_d.
\end{split}
\end{align*}
The integrals appearing in this formula can be expressed in terms of the
classical Faddeeva~$w$ function, which can be evaluated rapidly using, for
example, the highly optimized implementation provided
in~\cite{faddeevapackage}. In practice, we find $n=6$ and $m=18$ sufficient
to obtain full machine precision accuracy for the function
$f(z)=t(1-e^{-2\pi itz})^{-1}$.

\subsection{Evaluation of the singular Hadamard finite-part integral}
\label{sec:finitepart}

The final step in evaluating the set zeta function for the corner is
the evaluation of the Hadamard finite part integral
\begin{align*}
\ddashint_0^{1/\lambda} t^{\nu-d-1} \psi_{L-\bm x}(t,\bm y)\, \dd t.
\end{align*}
We proceed by splitting this integral into two parts,
\begin{align*}
\ddashint_0^c t^{\nu-d-1} \psi_{L-\bm x}(t,\bm y)\, \dd t  +
\int_c^{1/\lambda} t^{\nu-d-1} \psi_{L-\bm x}(t,\bm y)\, \dd t.
\end{align*}
In order to evaluate this first integral, we form the power series approximation
\begin{align*}
\psi_{L-\bm x}(t,\bm y) \approx \sum_{j=0}^m a_j (t/c)^j,
\end{align*}
so that
\begin{align*}
\ddashint_0^c t^{\nu-d-1} \psi_{L-\bm x}(t,\bm y)\, \dd t 
\approx c^{\nu-d} \sum_{j=0}^m \frac{a_j}{\nu-d+j},
\end{align*}
where we compute the coefficients $a_j$, $0\le j\le m$, by the formula
\begin{align*}
a_j = \frac{1}{2\pi} \int_0^{2\pi} \psi_{L-\bm x}(ce^{i\theta},\bm y)
e^{-ij\theta}\, \dd \theta
\approx
\frac{1}{m} \sum_{k=0}^{m-1} \psi_{L-\bm x}(ce^{2\pi i k/m},\bm y) 
e^{-2\pi ijk/m}.
\end{align*}
One problem with this formulation, however, is that $\psi_{L-\bm x}(t,\bm
y)$ can become extremely large for complex values of $t$, resulting in a
large cancellation error. Recall that
\begin{align*}
\psi_{L-\bm x}(t,\bm y) = 
e^{2\pi i \bm x \cdot \bm y}  e^{-\pi t^2 {\bm x}^2}
\frac{1}{V_\Lambda}
w(\bm Y(t); \otimes_{j=1}^d t(1-e^{-2\pi it(\cdot-i0^+)})^{-1}, A^T A/\pi)
\end{align*}
and
\begin{align*}
\bm Y(t)= -A^T (\bm y/t+i t \bm x).
\end{align*}
In the previous section, we defined
\begin{align*}
\phi(\bm Y(t)) = w(\bm Y(t); \otimes_{j=1}^d t(1-e^{-2\pi
it(\cdot-i0^+)})^{-1}, A^T A/\pi),
\end{align*}
as well as a scaled version $\tilde \phi(\bm Y(t)) = e^{-\beta} \phi(\bm
Y(t))$, so that $\tilde \phi(\bm Y(t)) \lesssim 1$. When $t>0$ is real, it
is generally the case that $e^{-\pi t^2 x^2 + \beta} \lesssim 1$, since $\pi
t^2 x^2 = \pi (A^{-T}\Im{\bm Y(t)})^2$, however, for complex $t$, this
relationship no longer holds. In order to evaluate the coefficients
$a_j$, $0\le j\le m$, stably, we must ensure that the integrand
remains small. 

To proceed, we will first need to modify the definitions of the number
$\bm{\bar z}$ and the functions $\tilde \phi^{(k)}\colon \C^d\to \C$, $0\le
k\le d-1$, introduced in the previous section.
Suppose that $\bm \gamma \in \{0,1\}^d$, and let $\bm{\bar z}_{\bm \gamma} =
\bm{\bar z}\odot \bm\gamma$,  where $\odot$ represents componentwise
multiplication, with $\bm{\bar z}$ defined as before. Let $\beta_{\bm\gamma}
= \sum_{j=1}^n \eta_j(\bm{\bar z}_{\bm\gamma})$. Next, let the functions
$\tilde \phi_{\bm\gamma}^{(k)} \colon \C^d \to \C$, $0\le k\le d-1$, be
defined by the formula
\begin{align*}
\begin{split}
&{\tilde\phi}_{\bm\gamma}^{(k)}(\bm{\tilde z}^{(k)}) = \int_{a_{k+1}}^{b_{k+1}} 
e^{-\eta_{k+1}(\bm{ \tilde z}^{(k+1)})^2-\eta_{k+1}(\bm{\bar z}_{\bm\gamma})^2}
\\
&\hspace*{9.5em}
f(\xi_{k+1} - \tilde z_{k+1} - i0^+) {\tilde\phi}^{(k+1)}(\bm{\tilde
z}^{(k+1)}) \, \dd \xi_{k+1},
\end{split}
\end{align*}
when either $\Im{\tilde z_{k+1}} \ge 0$ or $\gamma_{k+1} = 0$, and let
\begin{align*}
\begin{split}
&{\tilde\phi}_{\bm\gamma}^{(k)}(\bm{\tilde z}^{(k)}) = 
e^{-\eta_{k+1}(\bm{\tilde z}^{(k)})^2
-\eta_{k+1}(\bm{\bar z}_{\bm\gamma})^2}
{\tilde\phi}^{(k+1)}(\bm{\tilde z}^{(k)} + \Delta_{k+1}(\bm{\tilde z}^{(k)})) \\ 
&\hspace*{4.5em} +
\int_{a_{k+1}}^{b_{k+1}}
e^{-\eta_{k+1}(\bm{\tilde z}^{(k+1)})^2-\eta_{k+1}(\bm{\bar z}_{\bm\gamma})^2} \\
&\hspace*{9.5em} 
f(\xi_{k+1} - \tilde z_{k+1} -i0^-) {\tilde\phi}^{(k+1)}(\bm{\tilde z}^{(k+1)}) \,
\dd \xi_{k+1},
\end{split}
\end{align*}
when $\Im{z_{k+1}} < 0$ and $\gamma_{k+1} = 1$, where $\bm{\tilde z}^{(k)} =
(\xi_1,\xi_2,\ldots,\xi_k,\tilde z_{k+1},\tilde z_{k+2},\ldots,\tilde z_d)$,
$\bm z\in \C^d$, and $\tilde\phi_{\bm\gamma}^{(d)}(\bm \xi) = 1$. 
Suppose that, for each $j$ for which $\gamma_j = 0$, we have that
$\abs{\Im{z_j}} \le H_1$, where the set on which $e^{\bm\xi^T G^{-1}\bm \xi}
\le 1$ is contained in $[-H_1,H_1]^d$, and that $\abs{\Re{z_j}} \ge
\bar H_\varepsilon$, where the support of $e^{-\bm\xi^T G^{-1}\bm \xi}$ to precision
$\varepsilon$ is contained in $[-\bar H_\varepsilon,\bar H_\varepsilon]^d$.
Then, it is easy to see that $\tilde \phi^{(0)}_{\bm\gamma}(\bm{\tilde z})
\approx \tilde \phi^{(0)}(\bm{\tilde z})$, to precision $\varepsilon$.

Consider now $\bm{\tilde z} = \bm Y(t)$, where $A^T \bm y \in [-1/2,1/2)^d$.
Let $\bm\gamma\in \{0,1\}^d$ and suppose that $\gamma_j=1$ whenever $(A^T\bm
y)_j = 0$ and $\gamma_j=0$ otherwise, for $1\le j\le d$.  Then, for
sufficiently small $t > 0$, we have that $\tilde
\phi^{(0)}_\gamma(\bm{Y}(t)) \approx \tilde \phi^{(0)}(\bm{Y}(t))$, to
precision $\varepsilon$.
Notice that $\phi^{(0)}(\bm{Y}(ce^{i\theta}))$
can become very large for $0\le \theta < 2\pi$, even for small values of $c>0$, 
while $\phi^{(0)}_{\bm\gamma}(\bm{Y}(ce^{i\theta})) =
e^{\beta_{\bm\gamma}}\tilde \phi^{(0)}_{\bm\gamma}(\bm{Y}(ce^{i\theta}))
\lesssim 1$ for all $0\le \theta < 2\pi$, provided that $c>0$ is
sufficiently small.

The parameter $c>0$ can be chosen as follows. First,
letting $\sigma_1\ge \sigma_2 \ge \cdots \ge \sigma_d > 0$ denote the
eigenvalues of $A^TA$, it is easy to see that the support of $e^{-\pi
(A^{-T} \bm\xi)^2}$ is, to precision $\varepsilon$, contained in
$[-\bar H_\varepsilon,\bar H_\varepsilon]^d$ where $\bar
H_\varepsilon=\sqrt{\sigma_1 \log(1/\varepsilon)/\pi}$. Likewise, we see
that $e^{\pi (A^{-T} \bm\xi)^2} \le 1$ on the set $[-H_1,H_1]^d$
where $H_1=\sqrt{\sigma_d/\pi}$.  In order to use $\tilde
\phi^{(0)}_{\bm\gamma}(\bm{Y}(ce^{i\theta}))$ to compute the coefficients of our
power series approximation, this function must be periodic
and smooth in $\theta$, which will be the case only if
$(\bm{Y}(ce^{i\theta}))_j$ avoids the branch cut $[a_j,b_j]$, for all $1\le
j\le d$ for which $\gamma_j=0$. 
We now observe that $\abs{(\bm{Y}(ce^{i\theta}))_j}
\ge \abs{(A^T\bm y)_j}/c - c \abs{(A^T\bm x)_j}$, for all $1\le j\le d$,
provided that $c$ is sufficiently small.
Therefore, the branch cuts $[a_j,b_j]$ are avoided by
$(\bm{Y}(ce^{i\theta}))_j$, for all $1\le j\le d$ for which $\gamma_j=0$,
over all $0\le \theta < 2\pi$, provided that $\abs{(A^T\bm y)_j}/c - c
\abs{(A^T\bm x)_j} \ge \bar H_\varepsilon$, for all $1\le j\le d$ for which
$\gamma_j=0$.  
In order for $\phi^{(0)}_{\bm\gamma}(\bm{Y}(ce^{i\theta}))$ to remain
small for $0\le \theta <2\pi$, we also require that $c\abs{(A^T\bm x)_j} \le
H_1$, for all $1\le j\le d$ for which $\gamma_j=1$.
Finally, to ensure both that $c\abs{(A^T\bm x)_j} \le H_1$ for all
$1\le j\le d$ and that $e^{-\pi t^2\bm x^2}$ is small, we require that $c\le
1/(\abs{\bm x}\sqrt{\pi})$. The largest value of $c \le 1/\lambda$ which
satisfies these conditions is then selected. 

Note that, when $t$ is complex, the special scheme described in the previous
section for accelerating the evaluation of the inner-most integral using
the classical Faddeeva function, cannot be used.
Once the Hadamard finite part integral over $[0,c]$ is evaluated, the
remaining integral over the interval $[c,1/\lambda]$ is then evaluated using
a standard adaptive integration scheme.

\section{Numerical experiments}\label{sec:experiments}

In this section, we present several numerical experiments demonstrating the
accuracy and efficiency of our algorithm.  We check the accuracy of the
algorithm using known formulas for specific infinite discrete sets in
\cref{sec:knownform} and test the algorithm on subsets
of general lattices in \cref{sec:genlat}.

\subsection{Comparison to known formulas}
  \label{sec:knownform}

There exist known analytical formulas for zeta functions over certain
infinite discrete sets. In this section, we evaluate the accuracy of our
algorithm by evaluating these zeta functions, and comparing our results with
these formulas. We consider the sums
\begin{alignat*}{2}
S_1 &= \sum_{m,n\in \N} (m^2+n^2)^{-\nu/2}, \\
S_2 &= \sum_{m,n\in \N} (m^2+n^2)^{-\nu/2} (-1)^{m+n}, \\
S_3 &= \sum_{m,n\in \N} (m^2+n^2)^{-\nu/2} (-1)^{n-1}, \\
S_4 &= \hspace*{-0.5em}\sum_{k,\ell\in 2\N-1} (k^2+\ell^2)^{-\nu/2}, \\
S_5 &= \sum_{\substack{m\in \N\\ p \in 2\N}} (m^2+p^2)^{-\nu/2}, \\
S_6 &= \hspace*{-1.5em}\sum_{k_1,k_2,k_3,k_4 \in 2\N-1}\hspace*{-1.5em}
  (k_1^2+k_2^2+k_3^2+k_4^2)^{-\nu/2}, \\
S_7 &= 4\hspace*{-0.5em}\sum_{\ell,m,n \in \N}
  \big((\ell-\tfrac{1}{2})^2+m^2+n^2\big)^{-\nu/2} (-1)^{m}.
\end{alignat*}
Analytical formulas expressing these sums in terms of the Riemann zeta
function $\zeta(s)$ and the functions
\begin{align*}
\beta(s) = \sum_{n=0}^\infty (-1)^n (2n+1)^{-s},
\end{align*}
\begin{align*}
A(s) = \sum_{n=0}^\infty (-1)^n (4n+1)^{-s},
\end{align*}
and
\begin{align*}
B(s) = \sum_{n=1}^\infty (-1)^{n+1} (4n-1)^{-s},
\end{align*}
are presented by Glasser
in~\cite{glasser1973evaluation,glasser1973evaluation2} with corrections by
Zucker in \cite{zucker2017exact}.  We summarize these results in
\cref{tab:glassersums}.

\begin{table}[h]
  % \centering\small
  % \setlength{\tabcolsep}{5pt}
  \centering
\begin{tabular}{cl}
Sum & Value \\
\midrule
$S_1$ & $\zeta(\frac{\nu}{2})\beta(\frac{\nu}{2}) - \zeta(\nu)$ \\
\addlinespace[.20em]
$S_2$ & $(2^{1-\nu/2} - 1)\zeta(\frac{\nu}{2})\beta(\frac{\nu}{2}) 
+ (1-2^{1-\nu})\zeta(\nu)$ \\
\addlinespace[.20em]
$S_3$ & $(2^{-\nu/2}-2^{1-\nu})\zeta(\frac{\nu}{2})\beta(\frac{\nu}{2}) +
2^{-\nu}\zeta(\nu)$ \\
\addlinespace[.20em]
$S_4$ & $2^{-\nu/2}(1-2^{-\nu/2})\beta(\frac{\nu}{2})\zeta(\frac{\nu}{2})$ \\
\addlinespace[.20em]
$S_5$ & $\tfrac{1}{2}(1-2^{-\nu/2}+2^{1-\nu})\zeta(\frac{\nu}{2})
  \beta(\frac{\nu}{2}) - \tfrac{1}{2}(1+2^{-\nu})\zeta(\nu)$ \\
\addlinespace[.20em]
$S_6$ & $2^{-\nu}(1-2^{-\nu/2})(1-2^{1-\nu/2})\zeta(\frac{\nu}{2})
  \zeta(\frac{\nu}{2}-1)$ \\
\addlinespace[.20em]
$S_7$ & $2^{\nu}\bigl(\beta(\nu-1) - A(\frac{\nu}{2})^2 + B(\frac{\nu}{2})^2
- (1-2^{-\nu/2})\zeta(\frac{\nu}{2})\beta(\frac{\nu}{2})$ \\
& $\hspace*{4em} + (1-2^{-\nu})\zeta(\nu)\bigr)$ \\
\addlinespace[.55em]
\end{tabular}

\caption{
  \label{tab:glassersums} Analytical formulas provided
  in~\cite{glasser1973evaluation,glasser1973evaluation2,zucker2017exact} expressing certain
  lattice sums in terms of the functions $\zeta(s)$, $\beta(s)$,
  $A(s)$, and $B(s)$.
}
\end{table}

The sums $S_1,S_2,\ldots,S_7$ can be easily expressed as evaluations of
$Z_{A\Nz,\nu}\babs{{\!\bm x\atop \!\bm y}\!}$, for the values of $A$, $\bm x$, and
$\bm y$ provided in \cref{tab:glasserzeta}.

\begin{table}[h]
  % \centering\small
  % \setlength{\tabcolsep}{5pt}
  \centering
\begin{tabular}{cccc}
Sum & $\bm x^T$ & $\bm y^T$ & $A$ \\
\midrule
$S_1$ & $(-1,-1)$ & $(0,0)$ & $\left(\!\!\begin{array}{cc}
1 & 0 \\
0 & 1
\end{array}\!\!\right)$ \\
\addlinespace[.40em]
$S_2$ & $(-1,-1)$ & $(\tfrac{1}{2},\tfrac{1}{2})$ &
$\left(\!\!\begin{array}{cc}
1 & 0 \\
0 & 1
\end{array}\!\!\right)$ \\
\addlinespace[.40em]
$S_3$ & $(-1,-1)$ & $(\tfrac{1}{2},0)$ &
$\left(\!\!\begin{array}{cc}
1 & 0 \\
0 & 1
\end{array}\!\!\right)$ \\
\addlinespace[.40em]
$S_4$ & $(-1,-1)$ & $(0,0)$ &
$\left(\!\!\begin{array}{cc}
2 & 0 \\
0 & 2
\end{array}\!\!\right)$ \\
\addlinespace[.40em]
$S_5$ & $(-1,-2)$ & $(0,0)$ &
$\left(\!\!\begin{array}{cc}
1 & 0 \\
0 & 2
\end{array}\!\!\right)$ \\
\addlinespace[.40em]
$S_6$ & $(-2,-2,-2,-2)$ & $(0,0)$ &
$\left(\!\!\begin{array}{cccc}
1 & 0 & 0 & 0 \\
0 & 1 & 0 & 0 \\
0 & 0 & 1 & 0 \\
0 & 0 & 0 & 1
\end{array}\!\!\right)$ \\
\addlinespace[.40em]
$S_7$ & $(-\tfrac{1}{2},-1,-1)$ & $(0,\tfrac{1}{2},0)$ &
$\left(\!\!\begin{array}{ccc}
1 & 0 & 0 \\
0 & 1 & 0 \\
0 & 0 & 1
\end{array}\!\!\right)$ \\
\addlinespace[.55em]
\end{tabular}

\caption{
  \label{tab:glasserzeta} The values of $\bm x$, $\bm y$, and $A$ for
  expressing certain lattice sums in terms of the function
  $Z_{A\Nz,\nu}\babs{{\!\bm x\atop \!\bm y}\!}$.
}
\end{table}

We evaluate our algorithm by comparing our numerically computed results with
the formulas provided in \cref{tab:glassersums}, where the formulas are
evaluated using Mathematica to 200~digits of precision. We report
$E=\min(E_\text{abs},E_\text{rel})$, where $E_\text{abs}$ is the absolute
error and $E_\text{rel}$ is the relative error, over a range of~$\nu$
from $-2+10^{-4}$ to $10+10^{-4}$, with
values sampled in increments of $1/80$. The results are presented in
\cref{fig:glasser}. We do not report the timings, however, the entire experiment
was carried out in less than 10 minutes on a laptop with 16~GB of RAM and an
Intel 8th~Gen.\,Core i7-8550U~CPU.

\begin{figure}
    \centering
    \begin{subfigure}[t]{.49\textwidth}
      \caption{$S_1$}
      \includegraphics[width=\textwidth]{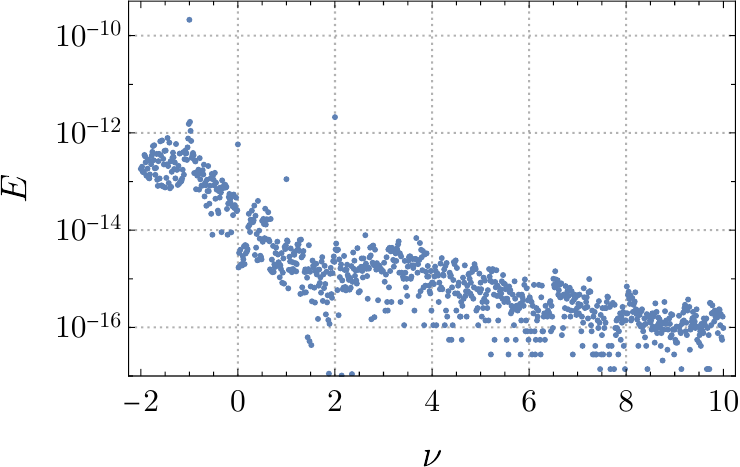}
    \end{subfigure}
    \begin{subfigure}[t]{.49\textwidth}
      \caption{$S_2$}
      \includegraphics[width=\textwidth]{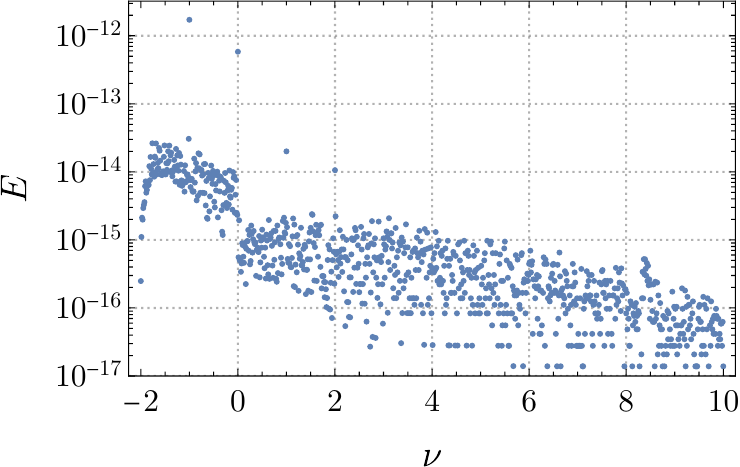}
    \end{subfigure}
    \begin{subfigure}[t]{.49\textwidth}
      \caption{$S_3$}
      \includegraphics[width=\textwidth]{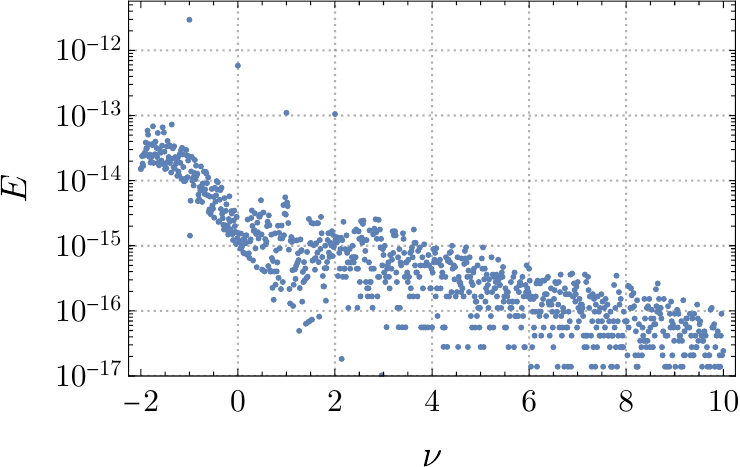}
    \end{subfigure}
    \begin{subfigure}[t]{.49\textwidth}
      \caption{$S_4$}
      \includegraphics[width=\textwidth]{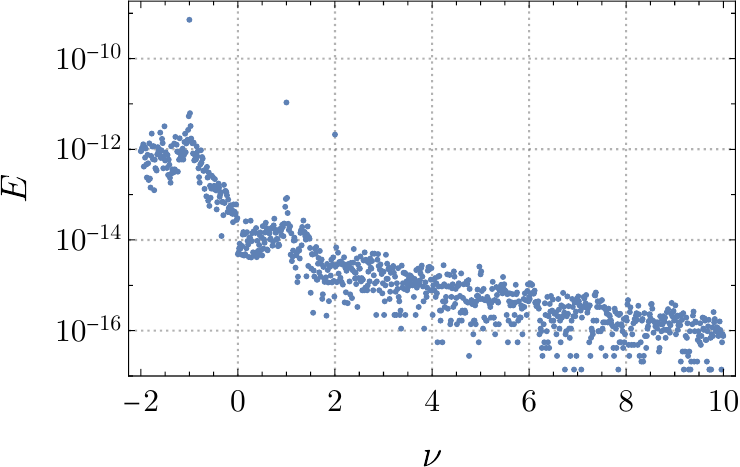}
    \end{subfigure}
    \begin{subfigure}[t]{.49\textwidth}
      \caption{$S_5$}
      \includegraphics[width=\textwidth]{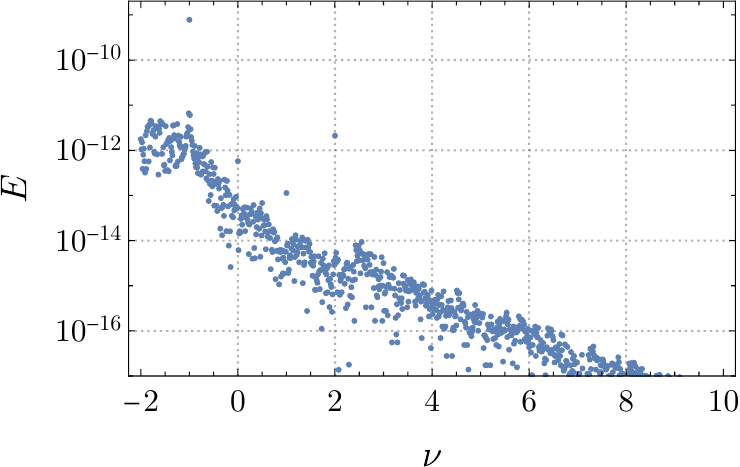}
    \end{subfigure}
    \begin{subfigure}[t]{.49\textwidth}
      \caption{$S_6$}
      \includegraphics[width=\textwidth]{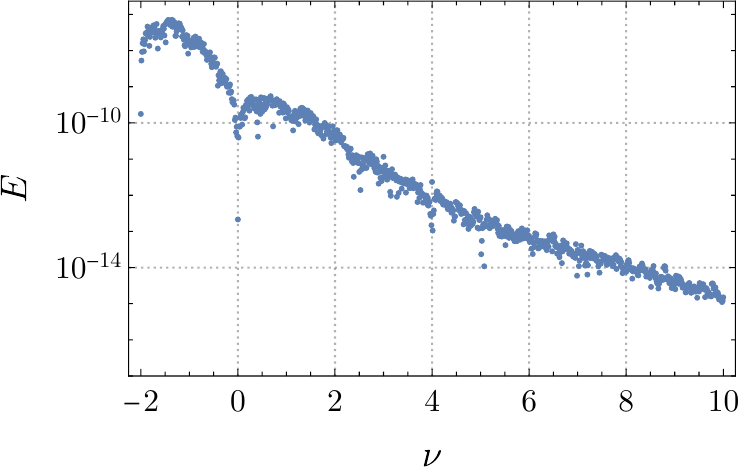}
    \end{subfigure}
    \begin{subfigure}[t]{.49\textwidth}
      \caption{$S_7$}
      \includegraphics[width=\textwidth]{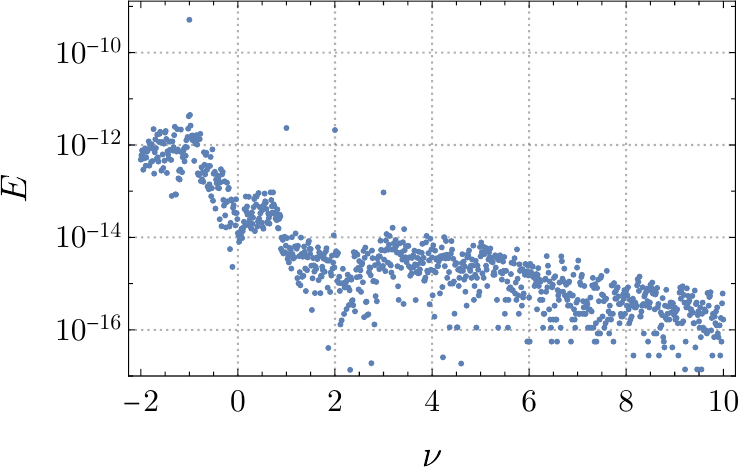}
    \end{subfigure}
    \caption{The error $E=\min(E_\text{abs},E_\text{rel})$ of our algorithm
    for certain lattice sums.}
    \label{fig:glasser}
\end{figure}

\subsection{Subsets of general lattices}
  \label{sec:genlat}

In this section, we apply our algorithm to subsets of general lattices of
the form $\Lambda = A \Z^d$.  In order to provide a concise description of
the lattice matrix $A$, we use the following crystallographic conventions.
Suppose that $d=3$, and let $\bm a$, $\bm b$, and $\bm c$ denote the columns
of $A$. Let $a=\vert \bm a \vert$, $b=\vert \bm b \vert$, and $c=\vert \bm
c\vert$, and let $\alpha$ denote the angle between $\bm b$ and $\bm c$,
$\beta$ denote the angle between $\bm a$ and $\bm c$, and $\gamma$ denote
the angle between $\bm a$ and $\bm b$ (see \cref{fig:lattice}).  The
parameters $a$, $b$, $c$, $\alpha$, $\beta$, $\gamma$ are known as the
lattice parameters.  For $d=2$, the lattice is summarized by lattice
parameters $a$, $b$, and $\gamma$, while for $d=1$, the lattice is
summarized by just $a$. In this section, we consider the zeta
function over the finite sublattice $L=\Lambda \cap \bigl(A \prod_{j=1}^d
[0,m]\bigr)$, where $m$ is a nonnegative even integer. The lattice matrices
considered in this section are shown in
\cref{tab:lattices}.

\begin{figure}
    \centering
    \includegraphics[width=.32\textwidth]{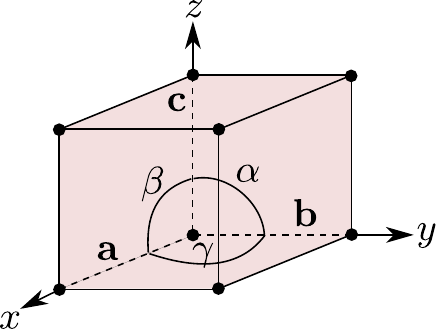}
    \caption{The lattice parameters indicated on the unit cell $E_\Lambda$.}
    \label{fig:lattice}
\end{figure}

The experiments were performed on Erlangen National High Performance
Computing Center (NHR@FAU), on the Fritz parallel CPU cluster.  Each
individual zeta function evaluation was performed using a single
computational thread.  We implemented our algorithm using a combination of
Fortran~77 and Fortran~90, and compiled it using the \texttt{gfortran}
compiler, version~8.5.0, with the \texttt{-03} and \texttt{-march=native}
flags.

We first tested our algorithm over a wide range of parameters $\bm x$ and
$\bm y$.  Letting $S=\prod_{j=1}^d\{ -m/2, 0, m/2\} + \prod_{j=1}^d \{
-\frac{1}{2},0,\frac{1}{2} \}$, we evaluated the zeta function at the points
$\bm x \in A S$.  Similarly, we chose $\bm y \in A^{-T} \prod_{j=1}^d
\{0,1/3,2/3 \}$.  The results of our algorithm are shown in
\cref{tab:results} for $\nu=d+0.1$, in which we use the following notation.
\begin{itemize}

\item $N$: The total numbers of points in the lattice: $(m+1)^d$.

\item $E_\text{max}$: The maximum, over all points $\bm x$ and $\bm y$, of
the minimum of the absolute error~$E_\text{abs}$ and relative
error~$E_\text{rel}$, $E_\text{max} = \max_{\bm x,\bm y}
\min(E_\text{abs},E_\text{rel})$.

\item $t_\text{min}$, $t_\text{max}$: The minimum and maximum CPU time
required for the evaluation of the set zeta function using our
algorithm, over all points~$\bm x$ and~$\bm y$.

\item $t_\text{avg}$: The average CPU time required for the evaluation of
the set zeta function using our algorithm, over all points
$\bm x$ and $\bm y$.

\item $t_\text{naive}$: The maximum CPU time required for the evaluation of
the set zeta function using our naive summation, over all points $\bm x$ and
$\bm y$.

\end{itemize}

\begin{table}[h]
  % \centering\small
  % \setlength{\tabcolsep}{5pt}
  \centering
\begin{tabular}{cccccccc}
Matrix & $d$ & $a$ & $b$ & $c$& $\alpha$ & $\beta$ & $\gamma$ \\
\midrule
$A_1$ & $d=1$ & $1.1$ & -- & -- & -- & -- & -- \\
$A_2$ & $d=2$ & $1.1$ & $1.2$ & -- & -- & -- & $2\pi/3$ \\
$A_3$ & $d=2$ & $1.1$ & $1.2$ & -- & -- & -- & $\pi/2$ \\
$A_4$ & $d=3$ & $1.1$ & $1.2$ & $1.3$ & $\pi/2$ & $\pi/2$ & $2\pi/3$ \\
$A_5$ & $d=3$ & $1.1$ & $1.2$ & $1.3$ & $\pi/2$ & $\pi/2$ & $\pi/2$ \\
\addlinespace[.55em]
\end{tabular}

\caption{
  \label{tab:lattices} The lattice matrices considered in our experiments.
}
\end{table}

\begin{table}[h]
\small
  % \centering\small
  % \setlength{\tabcolsep}{5pt}
  \centering
\begin{tabular}{cllllll}
Matrix & $m$ & $N$ & $E_\text{max}$ & $t_\text{min}$--$t_\text{max}$ &
$t_\text{avg}$ & $t_\text{naive}$ \\
\midrule
\multirow{6}*{$A_1$} 
& 1.00\e{2} &  1.00\e{2}   & 5.79\e{-15} & 1.49\e{-2}--6.04\e{-2} & 2.96\e{-2} & 1.02\e{-3} \\
& 1.00\e{4} &  1.00\e{4}   & 2.88\e{-15} & 1.48\e{-2}--5.99\e{-2} & 2.89\e{-2} & 3.20\e{-2} \\
& 1.00\e{6} &  1.00\e{6}   & 2.52\e{-15} & 1.01\e{-2}--4.11\e{-2} & 1.98\e{-2} & 2.15\e{0} \\
& 1.00\e{8} &  1.00\e{8}   & 2.49\e{-15} & 1.01\e{-2}--4.10\e{-2} & 1.98\e{-2} & 2.15\e{2} \\
& 1.00\e{10} & 1.00\e{10}  & --          & 9.83\e{-3}--4.25\e{-2} & 2.03\e{-2} & -- \\
& 1.00\e{14} & 1.00\e{14}  & --          & 9.83\e{-3}--4.06\e{-2} & 1.95\e{-2} & -- \\
\midrule
\multirow{5}*{$A_2$} 
& 1.00\e{1} & 1.21\e{2}  & 4.51\e{-14} & 1.01\e{0}-- 4.04\e{1} & 6.34\e{0} & 3.94\e{-3} \\
& 1.00\e{2} & 1.02\e{4}  & 5.33\e{-14} & 9.75\e{-1}--3.19\e{1} & 5.37\e{0} & 5.57\e{-2} \\
& 1.00\e{3} & 1.00\e{6}  & 3.95\e{-14} & 9.65\e{-1}--2.73\e{1} & 4.88\e{0} & 5.61\e{0} \\
& 1.00\e{4} & 1.00\e{8}  & 3.79\e{-14} & 9.49\e{-1}--2.85\e{1} & 4.96\e{0} & 3.35\e{2}\\
& 1.00\e{5} & 1.00\e{10} & --          & 9.61\e{-1}--3.65\e{1} & 5.48\e{0} & -- \\
& 1.00\e{7} & 1.00\e{14} & --          & 9.62\e{-1}--3.73\e{1} & 5.55\e{0} & -- \\
\midrule
\multirow{5}*{$A_3$} 
& 1.00\e{1} & 1.21\e{2}  & 2.86\e{-14} & 4.27\e{-2}--1.67\e{0}  & 2.46\e{-1} & 3.32\e{-3} \\
& 1.00\e{2} & 1.02\e{4}  & 5.83\e{-14} & 3.69\e{-2}--4.11\e{-1} & 1.26\e{-1} & 5.59\e{-2} \\
& 1.00\e{3} & 1.00\e{6}  & 5.92\e{-14} & 3.65\e{-2}--3.89\e{-1} & 1.26\e{-1} & 3.76\e{0} \\
& 1.00\e{4} & 1.00\e{8}  & 4.29\e{-14} & 3.60\e{-2}--5.46\e{-1} & 1.27\e{-1} & 3.53\e{2} \\
& 1.00\e{5} & 1.00\e{10} & --          & 3.64\e{-2}--7.33\e{-1} & 1.63\e{-1} & --         \\
& 1.00\e{7} & 1.00\e{14} & --          & 3.64\e{-2}--5.74\e{-1} & 1.41\e{-1} & --         \\
\midrule
\multirow{5}*{$A_4$} 
& 4.00\e{0} & 1.25\e{2}  & 6.41\e{-13} & 5.25\e{0}--1.49\e{2} & 2.64\e{1} & 1.34\e{-2} \\
& 2.20\e{1} & 1.22\e{4}  & 3.24\e{-13} & 4.84\e{0}--1.54\e{2} & 2.60\e{1} & 1.54\e{-1} \\
& 1.00\e{2} & 1.03\e{6}  & 2.83\e{-13} & 2.02\e{0}--1.34\e{2} & 1.87\e{1} & 7.10\e{0} \\
& 4.64\e{2} & 1.01\e{8}  & --          & 2.02\e{0}--1.43\e{2} & 1.69\e{1} & -- \\
& 2.20\e{3} & 1.07\e{10} & --          & 2.02\e{0}--1.81\e{2} & 1.63\e{1} & -- \\
& 4.64\e{4} & 9.99\e{13} & --          & 2.01\e{0}--1.94\e{2} & 1.73\e{1} & -- \\
\midrule
\multirow{5}*{$A_5$} 
& 4.00\e{0} & 1.25\e{2}  & 2.90\e{-13} & 1.25\e{0}-- 2.67\e{1} & 4.82\e{0}  & 1.85\e{-2} \\
& 2.20\e{1} & 1.22\e{4}  & 1.87\e{-13} & 9.38\e{-1}--2.07\e{1} & 4.22\e{0}  & 1.78\e{-1} \\
& 1.00\e{2} & 1.03\e{6}  & 1.73\e{-13} & 1.05\e{-1}--1.42\e{1} & 1.46\e{0}  & 8.19\e{0} \\
& 4.64\e{2} & 1.01\e{8}  & --          & 1.05\e{-1}--8.31\e{0} & 8.44\e{-1} & --    \\
& 2.20\e{3} & 1.07\e{10} & --          & 1.05\e{-1}--7.52\e{0} & 8.41\e{-1} & --    \\
& 4.64\e{4} & 9.99\e{13} & --          & 1.05\e{-1}--7.36\e{0} & 8.37\e{-1} & --    \\
\addlinespace[.55em]
\end{tabular}

\caption{
  \label{tab:results}
The performance and accuracy of our algorithm, compared to naive summation,
for the lattices described by the parameter $m$ and the lattice matrices 
defined in \cref{tab:lattices}.  
}

\end{table}

\begin{figure}
    \centering
    \includegraphics[width=.57\textwidth]{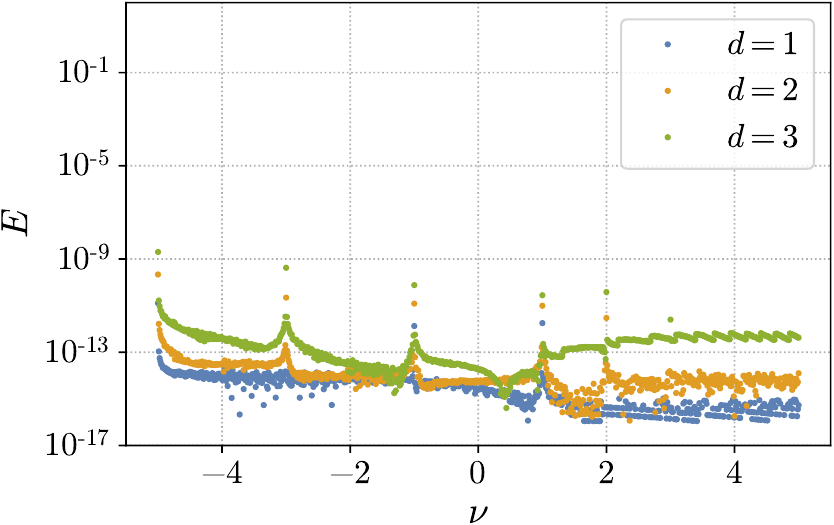}
    \caption{The error plotted for a range of $\nu$, for $d=1,2,3$.}
    \label{fig:nuplot}
\end{figure}

To show the effect of varying $\nu$, we also report the error
$E=\min(E_\text{abs},E_\text{rel})$, evaluated at $\bm x=A
(-\tfrac{1}{2})_{j=1}^d$ and $\bm y=0$, for $\nu$
from $-5+10^{-4}$ to $5+10^{-4}$, with values sampled in increments of
$1/80$.  For $d=1$, we consider the lattice $L$ with $m=10^4$ and lattice
matrix $A_1$; for $d=2$, we consider the lattice $L$ with $m=10^2$ and
lattice matrix $A_2$; and for $d=3$, we consider the lattice $L$ with $m=22$
and lattice matrix $A_3$. The results are shown in \cref{fig:nuplot}.

\begin{figure}
    \centering
    \begin{subfigure}[t]{\textwidth}
      \caption{$d=2$}
      \includegraphics[width=.49\textwidth]{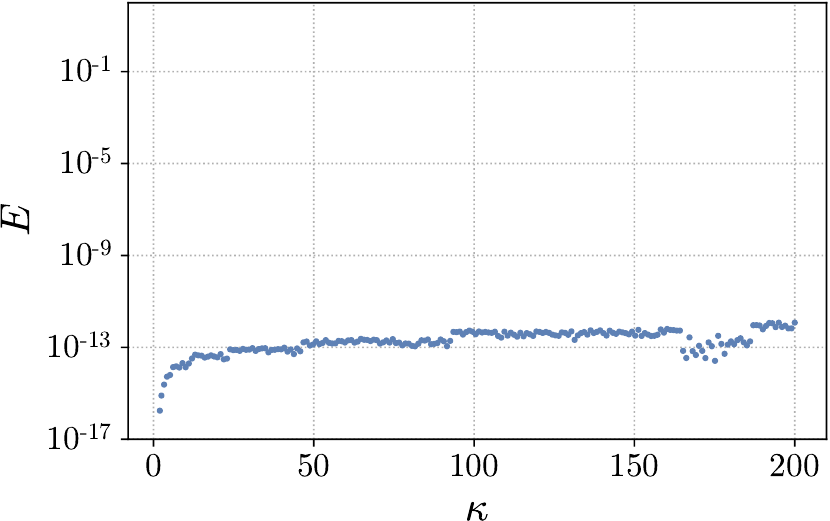}
      \includegraphics[width=.49\textwidth]{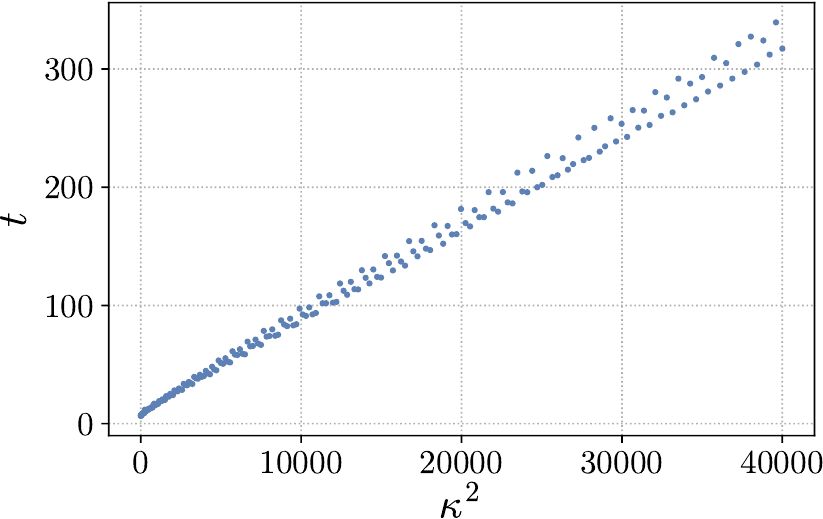}
    \end{subfigure}
    \begin{subfigure}[t]{\textwidth}
      \caption{$d=3$}
      \includegraphics[width=.49\textwidth]{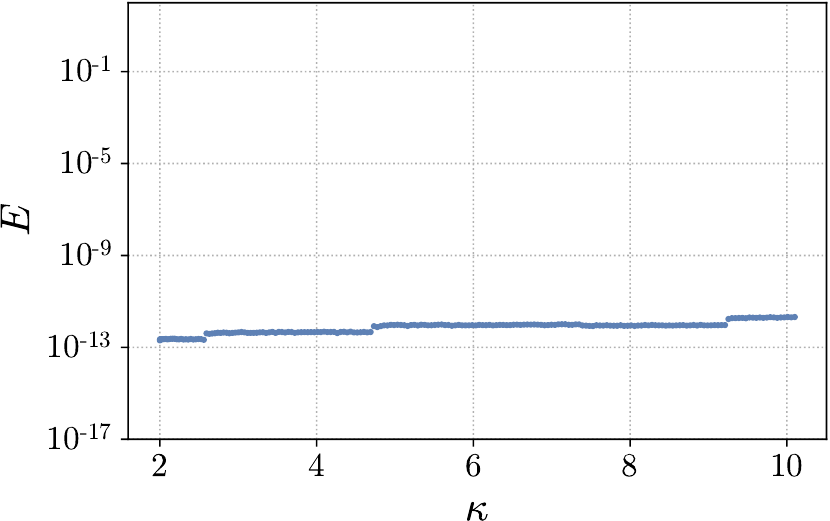}
      \includegraphics[width=.49\textwidth]{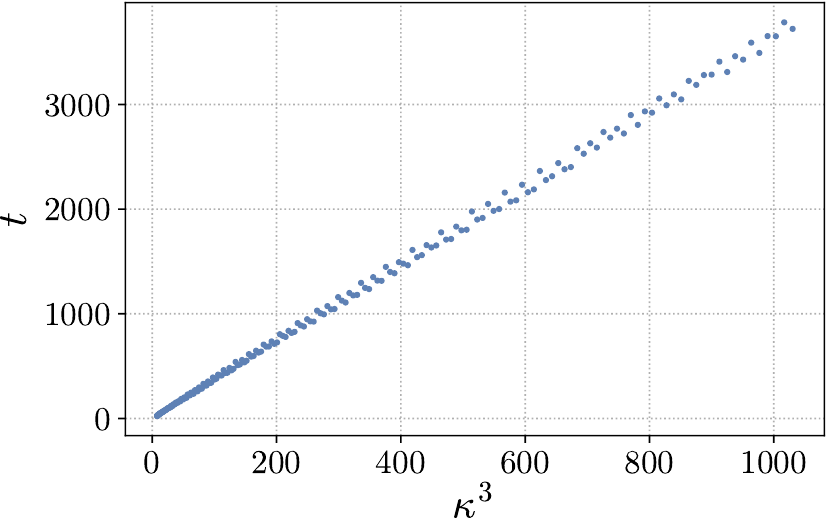}
    \end{subfigure}
    \caption{The errors (left) and runtimes (right) for a range of condition
    numbers $\kappa(A)$.}
    \label{fig:condplot}
\end{figure}

We demonstrate the effect of increasing the condition number of the matrix
$A$, reporting the error $E=\min(E_\text{abs},E_\text{rel})$ evaluated at
$\bm x=A (-\tfrac{1}{2})_{j=1}^d$ and $\bm y=0$, for $\nu=d+0.1$, with the
lattice $L$ with $m=10^2$ in $d=2$ dimensions, and the lattice $L$ with
$m=22$ in $d=3$ dimensions, for the matrices
\begin{align*}
A=\left(\begin{array}{cc}
1 & 0 \\
1 & 1/\mu
\end{array}\right)
\qquad \text{and} \qquad
A=\left(\begin{array}{ccc}
1 & 0 & 0\\
1 & 1/\mu & 0\\
0 & 0 & 1
\end{array}\right),
\end{align*}
since for both of these matrices $\kappa(A) \approx \mu$.  For $d=2$, we let
$\mu$ vary over 201 equispaced values between $1$ and $200$; for $d=3$, we
let $\mu$ vary over 201 equispaced values between $1$ and $10$. We also
report the runtime $t$ in seconds.  The results are shown in
\cref{fig:condplot}.  The cost is seen to grow like $O(\kappa(A)^d)$, as
mentioned in \cref{sec:splitting}.

\section*{Acknowledgements}
The authors gratefully acknowledge the scientific support and HPC resources provided by the Erlangen National High Performance Computing Center (NHR@FAU) of the Friedrich-Alexander-Universit\"at Erlangen-N\"urnberg (FAU) under the NHR project n101af. NHR funding is provided by federal and Bavarian state authorities. NHR@FAU hardware is partially funded by the German Research Foundation (DFG) – 440719683. TK acknowledges funding received from the European Union’s Horizon 2020 research
and innovation programme under the Marie Skłodowska--Curie grant agreement No 899987.
KS's work was supported in part by the NSERC Discovery Grants
RGPIN-2020-06022 and DGECR-2020-00356.
\begin{center}
\includegraphics[width=0.25\linewidth]{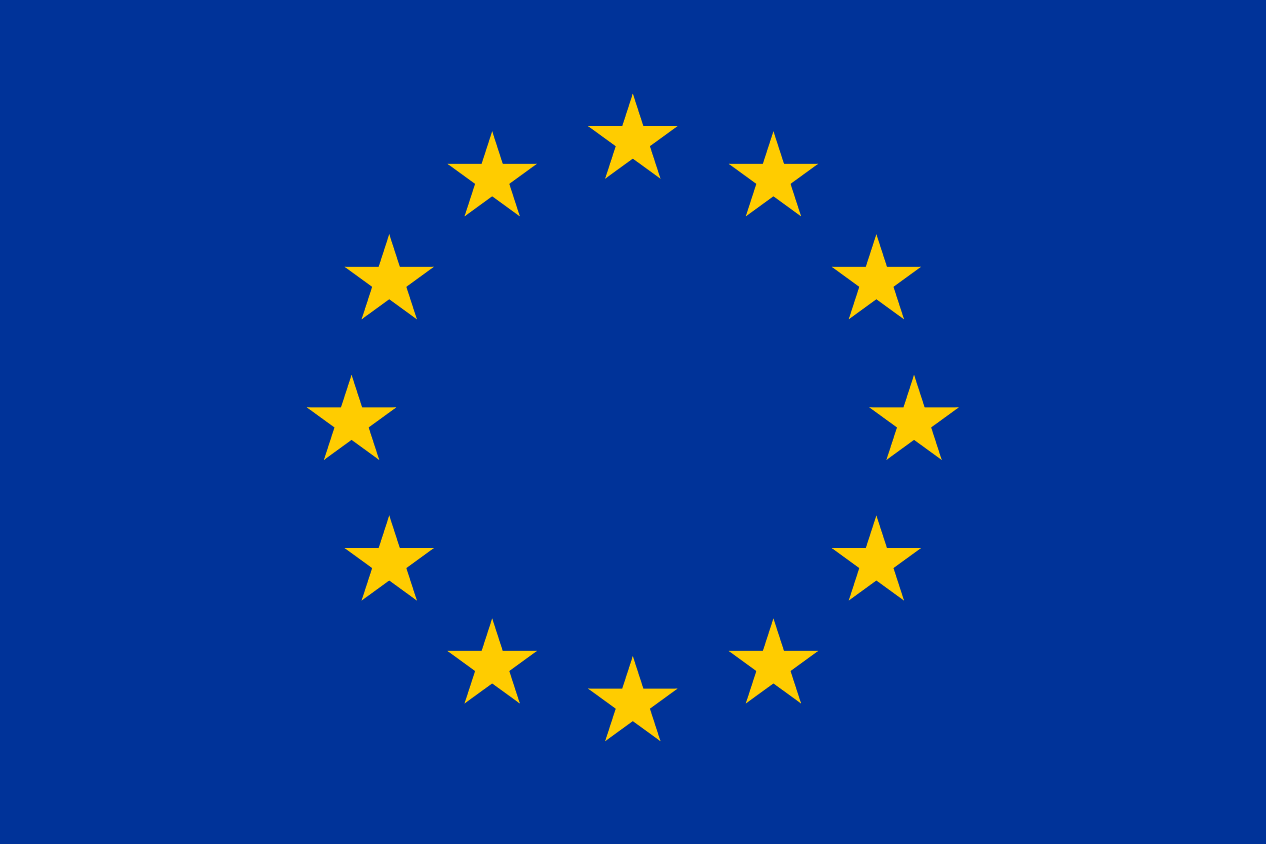}
\end{center}
\appendix 

%%%%%%%%%%%%%%%%%%%%%%%%%%%%%%%%%%%%%%%%%%%%%%%%%%%%%%%%%%%%%%%%%%%%%%%%%%%%%
% Part 4: Appendix
%%%%%%%%%%%%%%%%%%%%%%%%%%%%%%%%%%%%%%%%%%%%%%%%%%%%%%%%%%%%%%%%%%%%%%%%%%%%%

\section{Technical lemmas} \label{sec:technical_lemmas}

Here, we collect results that are needed to formulate the proofs of all theorems
but that are not relevant for the understanding of the main results.

\subsection{Properties of $G_\nu$}

We split the proof of our central theorem into several lemmas.
The first ones are concerned with the properties of $G_\nu$.

\begin{proof}[Proof of \cref{lem:exp-int-sk}]
The topology on $\mathcal S^k(\mathds R^d)$ is generated by the seminorms
\[
q_{\bm \alpha, \bm \beta}(\varphi) = \sup_{\bm z \in \mathds R^d} \big| \bm z^{\bm \alpha} \partial^{\bm \beta} \varphi(\bm z) \big|,
\quad \bm \alpha, \bm \beta \in \Nz^d, \quad |\bm \beta| \leq k.
\]
To show convergence, we split $\mathds R^d$ into a compact set comprising the origin and an unbounded set.
To that end, let $\chi \in \mathcal D(\mathds R^d)$ with $\chi = 1$ on a neighborhood of $\bm 0$.
By the triangle inequality for seminorms,
\[
q_{\bm \alpha, \bm \beta}(G_{-\mu} - \gamma_n)
\leq q_{\bm \alpha, \bm \beta}\big( \chi G_{-\mu} - \chi \gamma_n \big)
+ q_{\bm \alpha, \bm \beta}\big( (1 - \chi) G_{-\mu} - (1 - \chi) \gamma_n \big),
\]
so we can study the bounded and unbounded case separately.
For the latter, note that $(1 - \chi) G_{-\mu}$ and the sequence $(1 - \chi) \gamma_n$, $n \in \Nz$,
are elements of $\mathcal S(\mathds R^d)$, so by \cref{lem:bootstrap-convergence} it requires to show that
\[
\lim_{n \to \infty} (1 - \chi(\bm z)) \gamma_n(\bm z) = (1 - \chi(\bm z)) G_{- \mu}(\bm z), \quad \bm z \in \mathds R^d,
\]
and that $((1 - \chi) \gamma_n)_n$ is bounded.
The pointwise convergence follows immediately by noting that $\gamma_n$ is a Riemann sum approximating $G_{-\mu}$.
To show boundedness in $\mathcal S(\mathds R^d)$, we use \cref{lem:smooth-schwartz-mapping}, applied to
\[
\varphi(\bm z) = \exp(- \pi |\bm z|^2), \quad z \in \mathds R^d
\]
and note that
\[
\gamma_n = \frac{1}{n} \sum_{j = 1}^n (j / n)^{\mu - 1} (1 - \chi) \varphi(\bm \cdot / (j / n)),
\]
so that if $(1 - \chi) \varphi(\bm \cdot / t)$ is bounded by a constant $C > 0$ in a seminorm of $\mathcal S(\mathds R^d)$,
independently of $t > 0$, then $\gamma_n$ is bounded by the same constant in the same seminorm.
$\bm 0$, $\bar B_1(\bm 0)$ and the rest, $\mathds R^d \setminus \bar B_1(\bm 0)$.

Next, we turn to convergence of $\chi \gamma_n$ to $\chi G_{-\mu}$.
By assumption on $\mu$, $G_{-\mu}$ is $k$ times continuously differentiable with
\[
\partial^{\bm \beta} G_{-\mu}(\bm z) = \sum_{|\bm \delta| \leq |\bm \beta|} c_{\bm \delta}
\int_0^1 2 t^{\mu - |\bm \beta| - 1} (\bm z / t)^{\bm \delta} \exp(- \pi |\bm z|^2 / t^2) \, \text d t,
\]
for some constants $c_{\bm \delta}$.
With the same coefficients, it also holds
\[
\partial^{\bm \beta} \gamma_n(\bm z) = \sum_{|\bm \delta| \leq |\bm \beta|} c_{\bm \delta}
\frac{1}{n} \sum_{j = 1}^n 2 (j / n)^{\mu - |\bm \beta| - 1} (n / j \bm z)^{\bm \delta} \exp(- \pi n^2 / j^2 |\bm z|^2).
\]
Thus, we can focus on the convergence of each summand.
To that end, we study functions of the form
$\eta: [0, 1] \times \mathds R^d \to \mathds C$ with
\[
\eta(t, \bm z) = 2 t^{\mu - |\bm \beta| - 1} (\bm z / t)^{\bm \delta} \exp(- \pi |\bm z|^2 / t^2)
\]
for $t > 0$ and $\eta(0, \bm z) = 0$.
The function $\eta$ is continuous as factors including $\bm z$ are always bounded uniformly in $t$, independently of $\bm \delta$,
so that
\[
|\eta(t, \bm z)| \leq C t^{\Re \mu - |\beta| - 1},
\]
for a constant $C > 0$, which tends to $0$ as $t \to 0$, owing to the choice of $\mu$ and $\bm \beta$.
Multiplied with a smooth function of compact support it is uniformly continuous and hence
the Riemann sums $\chi \gamma_n$ converge uniformly in all derivatives up to order $k$ to $\chi G_{-\mu}$.
\end{proof}

\begin{lemma}\label{lem:exp-int-fourier}
Let $\beta > 0$.
The Fourier transformation of the smooth function
\[
\bm z \mapsto \int_0^{\beta} 2 t^{\nu - 1} e^{-\pi \bm z^2 t^2} \, \mathrm d t
\]
for $\nu > d + 1$ is given by $\beta^{-d + \nu} G_{d - \nu}(\bm \cdot / \beta)$.
\end{lemma}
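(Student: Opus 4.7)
The plan is a direct calculation via Fubini's theorem, the Gaussian Fourier transform, and a rescaling to recognize $G_{d-\nu}$. Write
\[
h(\bm z) = \int_0^{\beta} 2 t^{\nu - 1} e^{-\pi \bm z^2 t^2} \, \mathrm d t.
\]
First I would verify joint integrability of the nonnegative integrand $2 t^{\nu-1} e^{-\pi \bm z^2 t^2}$ on $\mathds R^d \times [0, \beta]$. By Tonelli,
\[
\int_{\mathds R^d}\!\!\int_0^\beta 2 t^{\nu - 1} e^{-\pi \bm z^2 t^2}\, \mathrm d t\, \mathrm d \bm z
= \int_0^\beta 2 t^{\nu - d - 1}\, \mathrm d t = \frac{2 \beta^{\nu - d}}{\nu - d},
\]
which is finite since $\nu > d + 1 > d$. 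In particular $h \in L^1(\mathds R^d)$, so its Fourier transform is classical and Fubini justifies interchanging the $t$ and $\bm z$ integrations below.

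Next, I would use the standard Gaussian identity $\int_{\mathds R^d} e^{-\pi t^2 \bm z^2} e^{-2 \pi i \bm z \cdot \bm \xi}\, \mathrm d\bm z = t^{-d} e^{-\pi \bm \xi^2 / t^2}$ for $t > 0$, together with Fubini, to obtain
\[
\hat h(\bm \xi)
= \int_0^\beta 2 t^{\nu - 1} \Bigl( \int_{\mathds R^d} e^{-\pi t^2 \bm z^2} e^{-2 \pi i \bm z \cdot \bm \xi}\, \mathrm d\bm z \Bigr) \mathrm d t
= \int_0^\beta 2 t^{\nu - d - 1} e^{-\pi \bm \xi^2 / t^2}\, \mathrm d t.
\]

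Finally, I substitute $s = t / \beta$, which gives
\[
\hat h(\bm \xi) = \beta^{\nu - d}\, \cdot 2 \int_0^1 s^{\nu - d - 1} e^{-\pi (\bm \xi / \beta)^2 / s^2}\, \mathrm d s.
\]
Applying the integral representation of $G_\mu$ from \cref{thm:crandall}, namely
\[
G_{\mu}(\bm w) = \int_{-1}^{1} |t|^{-\mu} e^{-\pi \bm w^2 / t^2}\, \frac{\mathrm d t}{|t|}
= 2 \int_0^1 t^{-\mu - 1} e^{-\pi \bm w^2 / t^2}\, \mathrm d t,
\]
with $\mu = d - \nu$ and $\bm w = \bm \xi / \beta$, the right-hand side equals $\beta^{\nu - d}\, G_{d - \nu}(\bm \xi / \beta)$, yielding the claimed formula.

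There is no real obstacle in this argument: the hypothesis $\nu > d + 1$ is stronger than what is strictly needed ($\nu > d$ suffices for the classical Fourier transform and Fubini), so the proof is purely a matter of correctly tracking the exponents through the substitutions. The extra condition $\nu > d+1$ is presumably to match the regularity needed where this lemma is applied (in the proof of \cref{thm:generalized-crandall}, to ensure $\hat h \in \mathcal S^k(\mathds R^d)$ for the relevant $k$).
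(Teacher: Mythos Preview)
Your proof is correct and more direct than the paper's. The paper proceeds by discretizing the $t$-integral as a Riemann sum $h_n$, arguing that $h_n \to h$ in $\mathcal S'(\mathds R^d)$ via pointwise convergence plus a uniform bound, computing $\mathcal F h_n$ termwise on the finite sum of Gaussians, and then passing to the limit. Your Fubini/Tonelli argument accomplishes the same interchange of $t$- and $\bm z$-integration in one stroke and is the natural route here; the paper's distributional-limit detour buys nothing for this particular lemma beyond stylistic consistency with the Riemann-sum machinery used in \cref{lem:exp-int-sk}. One small point worth stating explicitly: the integral representation of $G_{d-\nu}$ you invoke is given in \cref{thm:crandall} only for $\bm w \neq \bm 0$, but at $\bm \xi = \bm 0$ your computation gives $\hat h(\bm 0) = 2\beta^{\nu-d}/(\nu-d)$, which agrees with the convention $G_{d-\nu}(\bm 0) = -2/(d-\nu)$, so the identity holds there as well.
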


\begin{proof}
Let $h: \mathds R^d \to \mathds C$ denote
\[
h(\bm z) = \int_0^{\beta} 2 t^{\nu - 1} e^{-\pi \bm z^2 t^2} \, \text d t.
\]
To compute the Fourier transform, we discretize the integral
from $0$ to $\beta$ by a Riemann sum, which results in Schwartz functions
\[
h_n(\bm z) = \frac{\beta}{n} \sum_{j = 1}^n 2 t_j^{\nu - 1}
e^{- \pi \bm z^2 t_j^2},
\quad t_j = \frac{\beta}{n} j.
\]
Since the integrand is continuous, we have for each $\bm z \in \mathds R^d$,
\[
\lim_{n \to \infty} h_n(\bm z) = h(\bm z).
\]
Additionally,
\[
|h_n(\bm z)| \leq \frac{\beta}{n} \sum_{j = 1}^n t_j^{\nu - 1},
\quad \bm z \in \mathds R^d.
\]
As the right-hand side converges to the integral of the continuous 
function $t \mapsto 2 t^{\nu - 1}$ over $[0, \beta]$,
it is uniformly bounded in $n \in \Nz$.
Thus,$(h_n)_n$ converges to $h$ in $\mathcal S'(\mathds R^d)$.
In particular, the Fourier transformations of $(h_n)_n$
converge to $\mathcal F h$.
The Fourier transformation is readily computed as
\[
\mathcal Fh_n(\bm k) = 
\frac{\beta}{n} \sum_{j = 1}^n 2 t_j^{\nu - d - 1}
e^{- \pi \bm k^2 / t_j^2}, \quad \bm k \in \mathds R^d.
\]
By assumption, $\nu > d + 1$, so $t \mapsto 2 t^{\nu - d - 1}$
is continuous and bounded on $[0, \beta]$.
Hence, $\mathcal F h_n(\bm k)$ converges to
\[
\int_{0}^\beta t^{\nu - d - 1} e^{-\pi \bm k^2 / t^2} \, \text d t
= \beta^{-d + \nu} G_{d - \nu}(\bm k / \beta),
\]
which proves the asserted form of $\mathcal F h$.
\end{proof}

\subsection{Hadamard representation}
\label{sec:techincal-hadamard-representation}
In the following we collect the necessary lemmas for the proof of \cref{lem:scaled-conv-smooth-continuous}.

\begin{lemma}\label{lem:differentiable-schwartz}
Let $\beta: [0, 1] \times \mathds R^d \to \mathds C$ be a smooth mapping such that for all $t \in [0, 1]$, $\beta(t, \bm \cdot)$ is a Schwartz function.
If the families of partial derivatives $t \mapsto \partial_t^k \beta(t, \bm \cdot)$, $k \in \Nz$,
are uniformly bounded in the Schwartz topology, then $\beta$ is smooth as a mapping
of $[0, 1]$ to $\mathcal S(\mathds R^d)$,
\[
[0, 1] \to \mathcal S(\mathds R^d),~t \mapsto \beta(t, \bm \cdot),
\]
and $\partial_t^k \beta(t, \bm \cdot) \in \mathcal S(\mathds R^d)$ for all $t \in [0, 1]$, $k \in \Nz$.
\end{lemma}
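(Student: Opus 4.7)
The plan is to reduce the statement to the bootstrap convergence principle in \cref{lem:bootstrap-convergence}. The hypothesis gives pointwise (in $\bm z$) smoothness of $\beta$ and uniform boundedness of all $t$-derivative families in the Schwartz topology; what we have to upgrade is smoothness of the map $t \mapsto \beta(t,\bm \cdot)$ with values in $\mathcal S(\mathds R^d)$.

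First, I would verify the claim $\partial_t^k \beta(t,\bm \cdot) \in \mathcal S(\mathds R^d)$ for each $t \in [0,1]$ and $k \in \Nz$. By hypothesis the family $\{ \partial_t^k \beta(s,\bm \cdot) : s \in [0,1] \}$ is bounded in $\mathcal S(\mathds R^d)$, hence in particular each member lies in the Schwartz space. Next, I would establish $C^1$-smoothness as an $\mathcal S(\mathds R^d)$-valued map at an arbitrary $t_0 \in [0,1]$. For $h$ with $t_0 + h \in [0,1]$, the fundamental theorem of calculus applied pointwise in $\bm z$ yields
\[
\frac{\beta(t_0+h,\bm z) - \beta(t_0,\bm z)}{h} - \partial_t \beta(t_0,\bm z)
= \frac{1}{h} \int_0^h \big( \partial_t \beta(t_0+s,\bm z) - \partial_t \beta(t_0,\bm z) \big)\, \mathrm d s.
\]
Denote the left-hand side by $R_h(\bm z)$. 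Since the families $\partial_t \beta(s,\bm \cdot)$ are uniformly bounded in $\mathcal S(\mathds R^d)$ and every Schwartz seminorm is convex, the sequence $(R_{h_n})_n$ is bounded in $\mathcal S(\mathds R^d)$ for any null sequence $h_n \to 0$. Moreover, pointwise in $\bm z$, $R_{h_n}(\bm z) \to 0$ as $n \to \infty$ by the smoothness of $\beta$ in its first variable. \cref{lem:bootstrap-convergence} then forces $R_{h_n} \to 0$ in the Schwartz topology, which is exactly differentiability of $t \mapsto \beta(t,\bm \cdot)$ at $t_0$ with derivative $\partial_t \beta(t_0,\bm \cdot)$.

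Now I would iterate: applying the same argument to the map $t \mapsto \partial_t^k \beta(t,\bm \cdot)$, whose family of pointwise $t$-derivatives $\partial_t^{k+1} \beta(s,\bm \cdot)$ is again uniformly bounded in $\mathcal S(\mathds R^d)$ by hypothesis, yields differentiability in the Schwartz topology with derivative $\partial_t^{k+1}\beta(t,\bm \cdot)$. An induction on $k$ then produces all higher $\mathcal S(\mathds R^d)$-valued derivatives, with each such derivative continuous in $t$ (continuity follows from the same bootstrap argument applied to $\partial_t^k\beta(t_0+h_n,\bm \cdot) - \partial_t^k\beta(t_0,\bm \cdot)$, which is bounded and pointwise null). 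This establishes that $t \mapsto \beta(t,\bm \cdot)$ is $C^\infty([0,1];\mathcal S(\mathds R^d))$.

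The main obstacle is purely conceptual rather than computational: convergence in $\mathcal S(\mathds R^d)$ requires control of all weighted seminorms of all derivatives, which is not implied by pointwise convergence alone. The uniform-boundedness hypothesis combined with the Montel-type \cref{lem:bootstrap-convergence} is precisely what bridges this gap, so no additional estimate is needed.
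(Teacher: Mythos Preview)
Your proposal is correct and follows essentially the same approach as the paper: write the difference quotient via the fundamental theorem of calculus, use the uniform Schwartz boundedness hypothesis to bound the resulting family, and invoke \cref{lem:bootstrap-convergence} to upgrade pointwise convergence to Schwartz convergence, then induct on the derivative order. The only cosmetic difference is that the paper applies the bootstrap directly to the difference quotient (showing it converges to $\partial_t\beta$), whereas you subtract the limit first and show the remainder tends to zero; these are equivalent.
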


\begin{proof}
We focus on the first derivative. An inductive argument can be used to prove the assertion for an arbitrary derivative order.
First, we write the finite difference as an integral,
\[
\frac{\beta(t + h, \bm z) - \beta(t, \bm z)}{h} = \int_0^1 \partial_t \beta(t + h \tau, \bm z) \, \text d \tau,
\quad \bm z \in \mathds R^d.
\]
By assumption, the integral on the right hand side is uniformly bounded in $t, h$ in all Schwartz seminorms.
Thus, the finite difference is bounded too.
Furthermore, the finite difference converges for fixed $\bm z \in \mathds R^d$ to $\partial_t \beta(t, \bm z)$.
Owing to \cref{lem:bootstrap-convergence}, it converges strongly in $S(\mathds R^d)$
with $\partial_t \beta(t, \bm \cdot) \in \mathcal S(\mathds R^d)$ for all $t \in [0, 1]$.
\end{proof}

\begin{lemma}\label{lem:smooth-schwartz-mapping}
For $\varphi \in \mathcal S(\mathds R^d)$ and $\chi \in \mathcal D(\mathds  R^d)$
with $\chi = 1$ on a neighborhood of~$\bm 0$, the mapping
\[
\beta: [0, 1] \times \mathds R^d  \to \mathds C,~(t, \bm z) \mapsto \begin{cases}
(1 - \chi(\bm z)) \varphi(\bm z / t), & t \neq 0 \\
0, & t = 0
\end{cases}
\]
is smooth as a mapping $[0, 1] \to \mathcal S(\mathds R^d)$ with vanishing derivatives at $t = 0$.
Furthermore, the Schwartz seminorms of $\partial_t^k \beta(t, \bm \cdot)$, $k \in \Nz$,
are uniformly bounded in $t \in [0, 1]$ by seminorms of $\varphi$.
\end{lemma}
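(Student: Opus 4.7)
The plan is to establish the lemma in three steps: derive an explicit structural formula for every mixed partial derivative $\partial_t^k \partial_{\bm z}^{\bm\delta} \beta(t,\bm z)$, use the Schwartz decay of $\varphi$ to bound these derivatives uniformly in $t \in (0,1]$ by seminorms of $\varphi$, and then invoke \cref{lem:differentiable-schwartz} to promote pointwise smoothness into smoothness of $t \mapsto \beta(t,\bm\cdot)$ as a map into $\mathcal S(\mathds R^d)$.

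First, I would fix $r_1 > 0$ small enough that $\chi \equiv 1$ on $\bar B_{r_1}(\bm 0)$, so that $\beta(t,\bm z)=0$ whenever $|\bm z| \le r_1$. Using the chain rule $\partial_t [\varphi(\bm z/t)] = -t^{-2}\bm z \cdot (\nabla \varphi)(\bm z/t)$ inductively, together with the Leibniz rule for spatial derivatives, I would show that for $t \in (0,1]$ the mixed derivative admits the finite expansion
\[
\partial_t^k \partial_{\bm z}^{\bm\delta} \beta(t,\bm z) = \sum_j \eta_j(\bm z)\, t^{-m_j}\, P_j(\bm z)\, (\partial^{\bm\beta_j}\varphi)(\bm z/t),
\]
where each $\eta_j$ is smooth and bounded and vanishes on $B_{r_1}(\bm 0)$ (being either $1-\chi$ itself or a derivative $\partial^{\bm\delta'}\chi$ with $\bm\delta' \neq 0$, which is compactly supported away from the origin), $P_j$ is a polynomial with $\deg P_j \le k + |\bm\delta|$, $m_j \le 2k + |\bm\delta|$, and $|\bm\beta_j| \le k + |\bm\delta|$.

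The second step is the main estimate. For any Schwartz seminorm $q_{\bm\gamma,\bm\delta}(\psi) = \sup_{\bm z}|\bm z^{\bm\gamma}\partial_{\bm z}^{\bm\delta}\psi(\bm z)|$ and any $N \in \mathds N$, I would apply the decay bound $|(\partial^{\bm\beta_j}\varphi)(\bm u)| \le p_N(\varphi)(1+|\bm u|)^{-N}$, where $p_N(\varphi)$ is a Schwartz seminorm of $\varphi$. On the support of $\eta_j$ we have $|\bm z| \ge r_1$, so $(1+|\bm z|/t)^{-N} \le (|\bm z|/t)^{-N} = t^N |\bm z|^{-N}$, and combined with the polynomial bound $|P_j(\bm z)| \le C(1+|\bm z|)^{\deg P_j}$ this yields
\[
|\bm z^{\bm\gamma}\eta_j(\bm z)\, t^{-m_j} P_j(\bm z)\, (\partial^{\bm\beta_j}\varphi)(\bm z/t)| \le C_j\, t^{N-m_j}\, p_N(\varphi),
\]
provided $N \ge |\bm\gamma| + \deg P_j$; the factor $|\bm z|^{|\bm\gamma|+\deg P_j - N}$ is bounded by $r_1^{|\bm\gamma|+\deg P_j - N}$ since $|\bm z| \ge r_1$, and the compact-support terms (with $\eta_j = \partial^{\bm\delta'}\chi$) are handled in the same way. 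Choosing $N \ge \max_j m_j$ gives $t^{N-m_j} \le 1$ on $(0,1]$, producing the uniform bound on $q_{\bm\gamma,\bm\delta}(\partial_t^k \beta(t,\bm\cdot))$ by a Schwartz seminorm of $\varphi$.

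Choosing the strict inequality $N > \max_j m_j$ shows additionally that every mixed derivative tends to $0$ uniformly in $\bm z$ as $t \to 0$, so each $\partial_t^k \partial_{\bm z}^{\bm\delta}\beta$ extends continuously to $[0,1] \times \mathds R^d$ with value $0$ at $t = 0$, and $\partial_t^k \beta(t,\bm\cdot) \to 0$ in $\mathcal S(\mathds R^d)$ as $t \to 0$. \cref{lem:differentiable-schwartz} then upgrades the classical smoothness of $\beta$ on $[0,1] \times \mathds R^d$ into smoothness of $t \mapsto \beta(t,\bm\cdot)$ as a map $[0,1] \to \mathcal S(\mathds R^d)$ whose derivatives at $t = 0$ all vanish. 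The main obstacle is the combinatorial bookkeeping needed to justify the explicit expansion of $\partial_t^k \partial_{\bm z}^{\bm\delta} \beta$; once that is in place, the Schwartz decay of $\varphi$ converts the singular prefactor $t^{-m_j}$ into the harmless $t^{N-m_j}$ and the remaining estimates are routine.
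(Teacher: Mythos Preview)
Your proposal is correct and follows essentially the same approach as the paper: both derive the structural form of the mixed derivatives as finite sums of terms $\eta(\bm z)\,t^{-m}\,P(\bm z)\,(\partial^{\bm\gamma}\varphi)(\bm z/t)$ with $\eta$ vanishing near the origin, then exploit the Schwartz decay of $\varphi$ at the large argument $\bm z/t$ to absorb the singular factor $t^{-m}$, and finally invoke \cref{lem:differentiable-schwartz}. Your version with the free decay exponent $N$ is arguably cleaner than the paper's specific rewriting via powers of a single coordinate, but the underlying mechanism is identical.
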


\begin{proof}
First, we note that for any $t \in [0, 1]$, $\beta(t, \bm \cdot)$ is a Schwartz function, so the mapping is well-defined.
Moreover, it is a smooth function on $(0, 1] \times \mathds R^d$.
Its partial derivatives are linear combinations of
\[
(t, \bm z) \mapsto \partial^{\bm \alpha}(1 - \chi)(\bm z) t^{-n} \bm z^{\bm \delta} \partial^{\bm \gamma} \varphi(\bm z / t),
\]
for $\bm \alpha, \bm \gamma, \bm \delta \in \Nz^d$ and $n \in \Nz$.
To prove differentiability for $t = 0$, we claim that all derivatives in $t = 0$ vanish
and consider the finite difference of above term for $\bm z \in \mathds R^d$,
\[
\lim_{t \to 0} \frac{\partial^{\bm \alpha}(1 - \chi)(\bm z) t^{-n} \bm z^{\bm \delta} \partial^{\bm \gamma} \varphi(\bm z / t)}{t}
= \lim_{t \to 0} \partial^{\bm \alpha}(1 - \chi)(\bm z) t^{-n - 1} \bm z^{\bm \delta} \partial^{\bm \gamma} \varphi(\bm z / t).
\]
The limit can be rewritten as
\[
\begin{multlined}
\lim_{t \to 0} \partial^{\bm \alpha}(1 - \chi)(\bm z) t^{-n - 1} \bm z^{\bm \delta} \partial^{\bm \gamma} \varphi(\bm z / t) \\
= \lim_{t \to 0} t^{|\bm \delta| + 1} \frac{\partial^{\bm \alpha}(1 - \chi)(\bm z)}{z_1^{n + 2}} (z_1 / t)^{n + 2} (\bm z / t)^{\bm \delta} \partial^{\bm \gamma} \varphi(\bm z / t),
\end{multlined}
\]
where the term involving $\chi$ is zero already on a neighborhood of zero.
In particular, this a smooth and bounded function on $\mathds R^d$, regardless of the singular $1 / z_1^{n + 2}$ at the origin.
Hence, we can bound the term as
\[
\begin{multlined}
\left| t^{|\bm \delta| + 1} \frac{\partial^{\bm \alpha}(1 - \chi)(\bm z)}{z_1^{n + 2}} (z_1 / t)^{n + 2} (\bm z / t)^{\bm \delta} \partial^{\bm \gamma} \varphi(\bm z / t) \right| \\
\leq t^{|\bm \delta| + 1} \sup_{\bm x \in \mathds R^d} \left| \frac{\partial^{\bm \alpha}(1 - \chi)(\bm x)}{x_1^{n + 2}} \right|
\sup_{\bm x \in \mathds R^d} |x_1^{n + 2} \bm x^{\bm \delta} \partial^{\bm \gamma} \varphi(\bm x)|,
\end{multlined}
\]
so that the limit $t \to 0$ is zero.
Therefore, $\beta$ is smooth on $[0, 1] \times \mathds R^d$ and all partial derivatives vanish at $t = 0$.
Moreover, above estimates show that Schwartz seminorms with respect to $\bm z$ of all partial derivatives of $\beta$
are uniformly bounded for $t \in [0, 1]$ in terms of Schwartz seminorms of $\varphi$.
The assertion now follows from \cref{lem:smooth-schwartz-mapping}.
\end{proof}

\begin{proof}[Proof of \cref{lem:scaled-conv-smooth-continuous}]
For $\bm y_0 \in U$ there exits an open neighborhood $\Omega \subseteq U$ of $\bm y_0$ and $\varepsilon > 0$.
such that $\bm y + B_{3 \varepsilon}(\bm 0) \subseteq U$ for all $\bm y \in \Omega$.
Hence, for $\chi \in \mathcal D(\mathds R^d)$ even with $\supp \chi \subseteq B_{2 \varepsilon}(\bm 0)$,
$0 \leq \chi \leq 1$,
and $\chi = 1$ on $B_{\varepsilon}(\bm 0)$,
the distribution $\tau_{\bm y}\chi u$ is a smooth function with compact support for every $\bm y \in \Omega$.
If we split $T \varphi$ into two parts,
\[
T \varphi(t, \bm y) = \big( \big[ \tau_{\bm y} \chi u \big] \ast \varphi(\bm \cdot / t) \big)(\bm y)
+ \big( \big[ (1 - \tau_{\bm y} \chi) u \big] \ast \varphi(\bm \cdot / t) \big)(\bm y),
\]
then for first term, denoted by $T_1 \varphi$, we have
\[
T_1 \varphi(t, \bm y) = t^d \int_{\mathds R^d} \chi(t \bm z) u(\bm y - t \bm z) \varphi(\bm z) \, \text d \bm z,
\quad t \geq 0,~\bm y \in \Omega.
\]
Owing to the choice of the cut-off function $\chi$,
\[
|\chi(t \bm z) u(\bm y - t \bm z)| \leq \sup_{\bm z \in \Omega + B_{2 \varepsilon}(\bm 0)} |u(\bm z)|.
\]
Similar bounds also hold for arbitrary derivatives with respect to $t$ and $\bm y$ of the integrand.
By the dominated convergence theorem, $T_1 \varphi$ is a smooth function of both arguments.
Furthermore, $T_1 \varphi$ is bounded by
\[
|T_1 \varphi(t, \bm y)| \leq \sup_{\bm z \in \Omega + B_{2 \varepsilon}(\bm 0)} |u(\bm z)|
\sup_{\bm z \in \mathds R^d} |(1 + |\bm z|^2)^d \varphi(\bm z)|
\int_{\mathds R^d} (1 + |\bm z|^2)^{-d} \, \text d \bm z,
\]
and similarly for its derivatives.
This proves that $T_1$ is a continuous linear mapping of $\mathcal S(\mathds R^d)$
to $C^\infty([0, 1] \times \Omega)$.
Since $\bm y$ was arbitrary, the smoothness around any point of $U$ implies that $T_1$
maps onto smooth functions on the whole complement of the singular support,
that is
\[
T_1 : \mathcal S(\mathds R^d) \to C^\infty([0, 1] \times U)
\]
is well-defined, linear and continuous.

We now focus on the second term, $T_2 \varphi$.
Since $\chi$ is even, the convolution in $T_2 \varphi$ can be rewritten as
\[
T_2 \varphi(t, \bm y)
= u \ast \big[ (1 - \chi) \varphi(\bm \cdot / t) \big](\bm y)
= \big[ u \ast \beta(t, \bm \cdot) \big](\bm y)
\]
with $\beta(t, \bm z) = (1 - \chi(\bm z)) \varphi(\bm z / t)$.
For any $t \in [0, 1]$, $\beta(t, \bm \cdot)$ is a Schwartz function,
so the convolution is a smooth function of $\bm y$.
Furthermore, for fixed $\bm y \in \mathds R^d$,
the convolution is a smooth function of $t \in [0, 1]$ by \cref{lem:smooth-schwartz-mapping}.
Thus, $T_2 \varphi \in C^\infty([0, 1] \times \mathds R^d)$.
To show continuity of the mapping, we observe that any partial derivative of degree $k$ in $t$
and of arbitrary order in $\bm y$ is bounded compactly in $\bm y$ by seminorms
of $\partial_t^k \beta(t, \bm \cdot)$, that, by \cref{lem:smooth-schwartz-mapping},
are uniformly bounded in $t$ by seminorms of $\varphi$.
Thus, $T_2 \varphi$ is bounded and consequently continuous.

We conclude with a proof of the addendum for an even test function $\varphi$.
For $t \in [-1, 1]$, $t \neq 0$, we have
\[
\varphi(\bm z / t) = \varphi(\bm z / |t|), \quad \bm z \in \mathds R^d.
\]
In particular, we can smoothly extend $T_2 \varphi$ from $(0, 1]$ to $[-1, 1] \setminus \{ 0 \}$ via
\[
T_2 \varphi(t, \bm y) = T_2 \varphi(|t|, \bm y).
\]
For continuation into $t = 0$ we note that all derivatives of $T_2 \varphi$ decay super-algebraically for $t \to 0$
as a result of the proof for \cref{lem:smooth-schwartz-mapping}.
Hence the function
\[
g(t, \bm y) = |t|^{-d} T_2 \varphi(|t|, \bm y), \quad t \in [-1, 1] \setminus \{ 0 \}, \quad \bm y \in \mathds R^d
\]
has a smooth extension for $t = 0$ with $g^{(k)}(0) = 0$ for all $k \in \Nz$.
This shows
\[
g(t, \bm y) = \eta_2(t^2, \bm y), \quad t \in [-1, 1],~\bm y \in \mathds R^d,
\]
for $\eta \in C^\infty([0, 1] \in \mathds R^d)$, so that
\[
T_2 \varphi(t, \bm y) = t^d \eta_2(t^2, \bm y), \quad t \in [0, 1].
\]
Because $\varphi$ is even, we can write
\begin{align*}
\int_{\mathds R^d} \chi(t (- \bm z)) u(\bm y - t (- \bm z)) \varphi(\bm z) \, \text d \bm z
&= \int_{\mathds R^d} \chi(t \bm z) u(\bm y - t \bm z) \varphi(\bm z) \, \text d \bm z
\end{align*}
for $t \in [0, 1]$, $y \in \Omega$,
which shows that
\[
T_1(t, \bm y) = t^d v(t, \bm y), \quad t \in [0, 1],~\bm y \in U,
\]
for an even, smooth function $v$.
Hence, there is $\eta_1 \in C^\infty([0, 1] \times U)$ such that
\[
T_1 \varphi(t, \bm y) = t^d \eta_1(t^2, \bm y).
\]
The combination of both results proves the desired assertion.
\end{proof}

\printbibliography

\end{document}